\newtheorem{theorem}{Theorem}
\newtheorem{lemma}{Lemma}
\newtheorem{definition}{Definition}
\newtheorem{proposition}{Proposition}
\theoremstyle{definition}
\newtheorem{remark}{Remark}
\def \mb{\mathbb}
\def \bf{\mathbf}
\def \mr{\mathrm}
\def \R{\mb R}                 
\def \a{\alpha}         
\def \lm{\lambda}     
\def\v{{\bf v}}
\newcommand {\tQ} {\tilde{Q}}
\newcommand {\p} {\mathbf{p}}
\newcommand {\sk} {\mathfrak{s}}
\newcommand {\cH} {\mathcal{H}}
\newcommand {\cW} {\mathcal{W}}
\newcommand {\kS} {\mathfrak{S}}
\newcommand {\bI} {\mathbf{I}}
\newcommand {\e} {\epsilon}
\newcommand {\de} {\delta}
\newcommand{\lec}{\lesssim}
\newcommand{\gec}{\gtrsim}
\newcommand{\EQ}[1]{\begin{equation}\begin{split} #1 \end{split}\end{equation}}
\def \and{\mbox{and}}
\title{Global existence and singularity of the Hill's type lunar problem with strong potential}
\date{\today}
\begin{document}

\maketitle

\markboth{Yanxia Deng, Slim Ibrahim}{Hill's type lunar problem with strong potential}
\author{\begin{center} 
Yanxia Deng ~~\footnote{School of Mathematics (Zhuhai), Sun Yat-sen University, Zhuhai, Guangdong, China \quad dengyx53@mail.sysu.edu.cn}, 
Slim Ibrahim\footnote{Department of Mathematics and Statistics, University of Victoria, Victoria, BC, Canada \quad ibrahims@uvic.ca}\\

\end{center}}
\begin{abstract}
We characterize the fate of the solutions of Hill's type lunar problem using the ideas of ground states from PDE. In particular, the relative equilibrium will be defined as the ground state, which satisfies some crucial energetic variational properties in our analysis. We study the dynamics of the solutions below, at, and (slightly) above the ground state energy threshold. 
\end{abstract}

\section{Introduction.}
The three-body problem is a prototypical case in celestial mechanics. The system Sun-Earth-Moon can be considered as a typical example of the three-body problem. Using heuristic arguments about the relative size of various physical constants, Hill was able to give the equations for the motion of the moon as an approximation from the general three-body problem.  The Hill's lunar problem can be derived from the general three-body problem using symplectic scaling method \cite{MeSc82}. We give a derivation of the main problem for homogeneous potential in the last section of the paper. A popular description of the Hill's equations is to consider the motion of an infinitesimal body (the moon) which is attracted to a body (the earth) fixed at the origin. The infinitesimal body moves in a rotating coordinate system which rotates so that the positive $x$-axis points towards an infinite body (the sun) which is infinitely far way. The ratio of the two infinite quantities is taken so that the gravitational attraction of the sun on the moon is finite.

In particular, if the position of the moon is given by $(x,y)$, the planar Hill's equation with homogenous gravitational potential is given by

 \begin{equation}\label{eq:hlp}\begin{cases}\ddot{x}-2\dot{y}&=-V_x\\\ddot{y}+2\dot{x}&=-V_y,\end{cases}\end{equation}
where \begin{equation}V(x,y)=-\frac{\a+2}{2}x^2-\frac{\a+2}{r^\alpha},\quad r=\sqrt{x^2+y^2}, \quad \alpha>0\end{equation} is known as the effective potential. When $\a=1$, this is the Newtonian Hill's Lunar Problem; when $\a\geq 2$, we shall call it the Hill's type lunar problem with \emph{strong potential}.

This vector field is well-defined everywhere except at the origin $(0,0)$, which is the position of the earth. By the existence and uniqueness theorem of ODE, given $q(0)=(x(0),y(0))\neq (0,0)$ and $\dot{q}(0)\in \R^2$, there exists a unique solution $q(t)$ defined on the interval $[0, T_\mathrm{max})$, where $T_\mathrm{max}$ is the maximal interval of existence. 

\begin{definition}
If $T_\mathrm{max}<\infty$, then the solution is said to experience a singularity at $T_\mathrm{max}$; otherwise, we say the solution exists globally.
\end{definition}

Since the ODE is locally Lipschitz in $r\neq0$, blow-up is possible only by approaching the unique singularity, namely the collision. In particular, for the Hill's equation, if $T_\mathrm{max}<\infty$, then \EQ{\lim\limits_{t\to T_\mathrm{max}} (x(t), y(t))= (0,0),}  that is, the singularity of the Hill's equation is due to finite time collision at the origin.

The Hill's equation admits the famous Jacobi integral which we shall refer to as the \emph{energy}, \begin{equation}E(x,y,\dot{x},\dot{y}):=\frac{1}{2}(\dot{x}^2+\dot{y}^2)+V(x,y).\end{equation} 

The effective potential $V(x,y)$ has exactly two critical points $L_1:=(-\alpha^{\frac{1}{\alpha+2}},0)$ and $L_2:=(\alpha^{\frac{1}{\alpha+2}},0)$, which are known as the Lagrange points. The projection of the four-dimensional phase space onto the configuration $(x,y)$ space is called the Hill's regions. 
\EQ{\cH_c:=\{(x,y) | E(x,y,\dot{x},\dot{y})=c\}=\{(x,y) | V(x,y)\leq c\}.} The boundaries of the Hill's regions are called \emph{zero velocity curves} because they are the locus in the configuration space where the kinetic energy vanishes. A contour plot for $V(x,y)$ in the $(x,y)$-plane could be seen in Figure \ref{fig:hlpVcon}.

\begin{figure}[ht]
	\centering
	\includegraphics[width=0.5\textwidth]{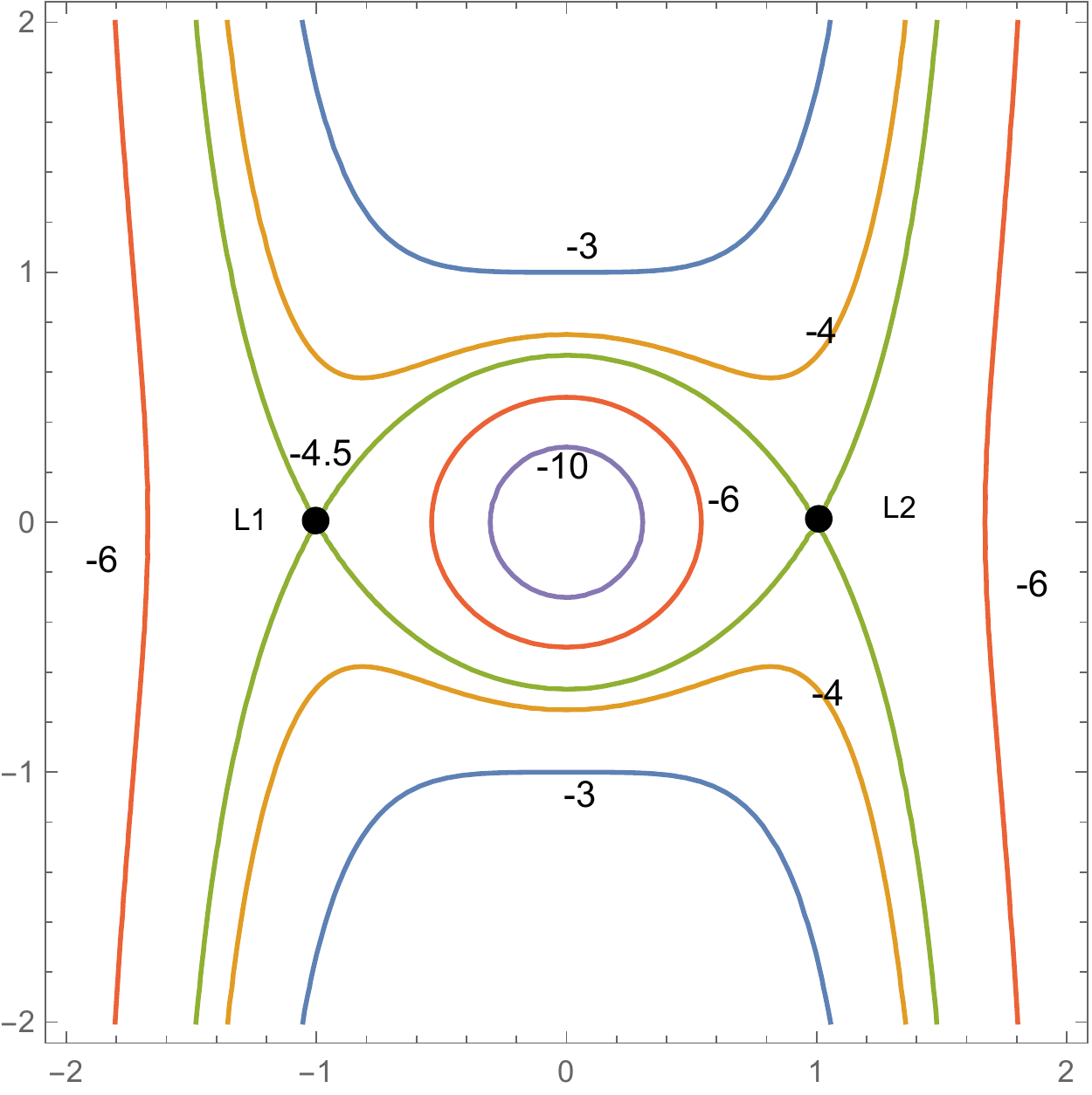}
	\caption{The contour plot of $V(x,y)$ with $\alpha=1$. $V(x,y)$ has two critical points $L_1:=(-\alpha^{\frac{1}{\alpha+2}},0)$ and $L_2:=(\alpha^{\frac{1}{\alpha+2}},0)$.}
\label{fig:hlpVcon}
\end{figure}

The structure of the Hill's regions depends on the value of the energy. There are four distinct cases regarding the shape of the Hill's regions:
\begin{itemize}
\item[(i)] $c<E^*$: both necks are closed, so orbits inside will remain bounded in the configuration space or collide with the origin.
\item[(ii)] $c=E^*$: the threshold case.
\item[(iii)] $E^*<c<0$: both necks are open, thus allowing orbits to enter the exterior region and escape from the system.
\item[(iv)] $c\geq 0$: motions over the entire configuration $(x,y)$ space is possible.  
\end{itemize}

In Figure \ref{fig:Hillrg} we present the structure of the first and third possible Hill's region for $\a=1$; all the other $\a>0$ have the same structure with varied values of $L_1, L_2$.

\begin{figure}[h]
\begin{subfigure}[b]{0.45\textwidth}
\centering
  \includegraphics[width=\textwidth]{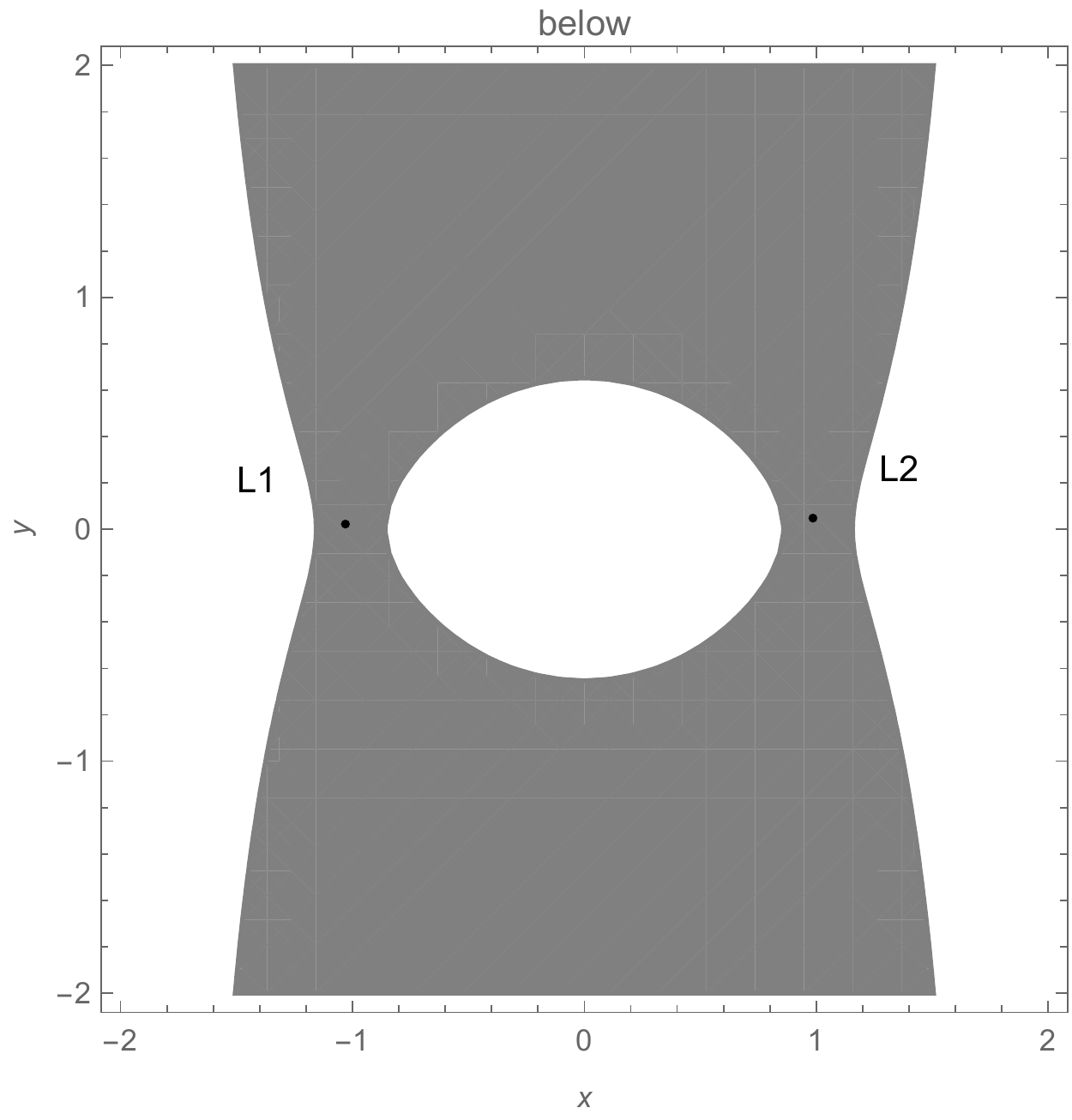}
  \end{subfigure}
\qquad\begin{subfigure}[b]{0.45\textwidth}
\centering
  \includegraphics[width=\textwidth]{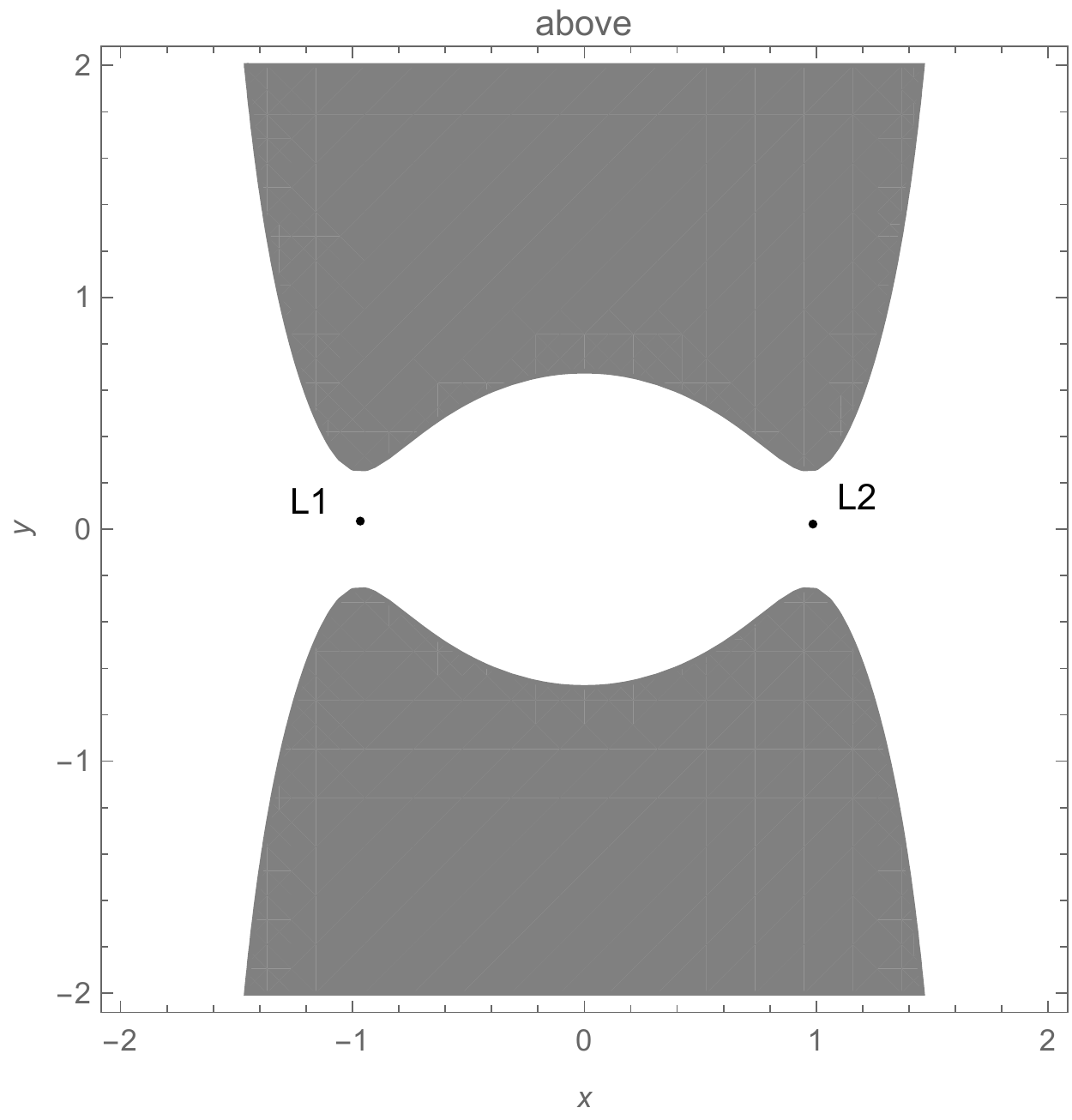}
  \end{subfigure}
\caption{Hill's regions $\cH_c$ when $\a=1$, $E^*=-4.5$. The white domains correspond to the Hill's regions, gray shaded domains indicate the energy forbidden regions. Left is below the ground state energy with $c=-4.6$; right is above the ground state energy with $c=-4.4$. }
\label{fig:Hillrg}
\end{figure}

Consequently, $\pm Q:=(\pm\alpha^{\frac{1}{\alpha+2}},0,0,0)$ are the only equilibria of (\ref{eq:hlp}). We will define $\pm Q$ to be the \emph{ground state}. In section \ref{sec:ground}, we will give a detailed analysis about the energetic variational properties satisfied by $\pm Q$ as well as some insights on why would we call them the ground states.  In particular, we will first define the \emph{ground state energy} $E^*$, which will be the minimum of the energy $E$ under the constraint $W=0$, where \EQ{W:=-xV_x-yV_y=(\a+2)(x^2-\frac{\alpha}{r^\alpha}).} Note that $E^*$ coincides with the energy of the equilibria. See Figure \ref{fig:hlpVW} for the graph of $V=E^*$ and $W=0$ in the $(x,y)$ plane. In particular, $W<0$ corresponds to the inner bounded region, and $W>0$ corresponds to the outer region. The rest of the paper is then devoted to study the dynamics of the solution of (\ref{eq:hlp}) with energy below, at, and (slightly) above the ground state energy threshold. Our main results are in the following theorems.

\begin{theorem}[Dichotomy below the ground state]
\label{thm:below}
For the Hill's lunar problem, consider the sets: \begin{equation}
\begin{split}
\cW_+&=\{\Gamma=(x, y, \dot{x}, \dot{y}) | E(\Gamma)<E^*, W(\Gamma) > 0\}\\
\cW_-&=\{\Gamma=(x, y, \dot{x}, \dot{y}) | E(\Gamma)<E^*, W(\Gamma) \leq 0\}
\end{split}
\end{equation}
then $\cW_+$ and $\cW_-$ are invariant. Solutions in $\cW_+$ exist globally and solutions in $\cW_-$ are bounded globally or collide with the origin in finite time. Moreover, for $\a\geq2$, solutions in $\cW_-$ all collide with the origin in finite time.
\end{theorem}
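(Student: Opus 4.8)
The plan is to treat the three assertions separately, the forced‑collision statement being the substantive one. For invariance, recall that $E$ is conserved along (\ref{eq:hlp}) and, by the variational characterization of $E^*$ from Section~\ref{sec:ground}, every phase point with $W=0$ satisfies $E\ge E^*$; hence along any solution with $E<E^*$ the function $t\mapsto W(\Gamma(t))$ is continuous and nowhere zero, so it keeps a constant sign. This shows at once that $\cW_+$ and $\cW_-$ are invariant (and, incidentally, that $\cW_-=\{E<E^*,\ W<0\}$). In $\cW_+$ one has $W>0$, i.e. $x^2r^\a>\a$, which together with $x^2\le r^2$ forces $r^{\a+2}>\a$, so $r>\a^{1/(\a+2)}$ and the orbit never approaches the only possible singularity, the collision; since on $\{r\ge r_0\}$ the right‑hand side of (\ref{eq:hlp}) is bounded by $C(1+|q|+|\dot q|)$ (the $x^2$‑part of $V$ gives a linear force, the $r^{-\a}$‑part a bounded one) there is also no finite‑time escape, so $T_{\mathrm{max}}=\infty$. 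In $\cW_-$, the bound $\tfrac12|\dot q|^2=E-V\ge0$ gives $-V\ge|E|$, while $W<0$ gives $-V=\tfrac{\a+2}{2}x^2+(\a+2)r^{-\a}<\tfrac{(\a+2)^2}{2}r^{-\a}$; hence $r^\a<\tfrac{(\a+2)^2}{2|E|}$ and the orbit stays in a bounded configuration region, so either $T_{\mathrm{max}}<\infty$ (a collision) or $T_{\mathrm{max}}=\infty$ and the orbit is bounded — which is the second assertion.

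For the forced collision when $\a\ge2$ I would use the Lagrange--Jacobi identity. With $I=\tfrac12(x^2+y^2)$, differentiating twice along (\ref{eq:hlp}) yields $\ddot I=|\dot q|^2+2(x\dot y-y\dot x)+W$, and substituting $|\dot q|^2=2(E-V)$ together with the Euler relations for the degree‑$2$ and degree‑$(-\a)$ parts of $V$ turns this into
\[
\ddot I \;=\; 2E+2(\a+2)x^2-(\a-2)(\a+2)\,r^{-\a}+2(x\dot y-y\dot x).
\]
The key claim is that $\ddot I\le-c<0$ throughout $\cW_-$ when $\a\ge2$. Granting it, $\dot I(t)\le\dot I(0)-ct\to-\infty$, so $\dot I$ becomes and stays negative; then for $t\ge t_1$ large, $I(t)\le I(t_1)+\dot I(t_1)(t-t_1)-\tfrac c2(t-t_1)^2$, and since $I\ge0$ the solution cannot exist past the (finite) time at which this right‑hand side vanishes. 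Thus $T_{\mathrm{max}}<\infty$, and by the first part this singularity is a collision.

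The hard part is establishing $\ddot I<0$ on all of $\cW_-$ for $\a\ge2$. One controls the rotational term by $|x\dot y-y\dot x|\le r|\dot q|$, with $r^2|\dot q|^2=2Er^2+(\a+2)x^2r^2+2(\a+2)r^{2-\a}$, and uses the two constraints available in $\cW_-$: $x^2\le r^2$ and $x^2<\a r^{-\a}$ (the latter giving the bound on $r$ above). Near the collision this is easy: for $\a>2$ the term $-(\a-2)(\a+2)r^{-\a}$ diverges to $-\infty$ faster than $2r|\dot q|=O(r^{1-\a/2})$, while for $\a=2$ that term is absent but $|x\dot y-y\dot x|\le r|\dot q|$ stays bounded and $x^2\le r^2\to0$, so $\limsup_{r\to0}\ddot I\le 2E+4\sqrt2<2E^*+4\sqrt2=-4\sqrt2<0$. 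The delicate regime is intermediate $r$, where $|\dot q|$ is comparatively large and $\ddot I$ is not negative term‑by‑term; there the exact threshold $E^*=-\tfrac{(\a+2)^2}{2}\a^{-\a/(\a+2)}$ must be exploited. Inserting the bounds above reduces $\ddot I<0$ (for $\a=2$) to a quadratic inequality in $x^2$ and $r^2$ whose validity turns out to be equivalent to $E<E^*$, and for $\a>2$ the extra negative term provides additional room. Finally, to upgrade strict negativity to $\ddot I\le-c<0$ along a given orbit one uses that $E<E^*$ keeps the orbit strictly inside a zero‑velocity curve (with a margin) and that $\ddot I$ stays bounded away from $0$ as $r\to0$. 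I expect this sharp‑threshold inequality to be the main technical obstacle.
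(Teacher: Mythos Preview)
Your invariance and boundedness arguments are correct and match the paper's reasoning (the paper phrases invariance geometrically via the Hill regions in Figure~\ref{fig:hlpVW}, you via the variational identity $E^*=\inf\{E\mid W=0\}$; these amount to the same observation).

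For the collision claim when $\a\ge2$ your route is genuinely different from the paper's. You bound the Coriolis term by $|x\dot y-y\dot x|\le r|\dot q|$ and reduce $K<0$ to an explicit inequality in $(x^2,r^2,E)$. For $\a=2$ this does close: with $u=x^2$, $s=r^2$, squaring the bound gives $2Es+4us+8<(E+4u)^2$; the slack is increasing in $s$ (since $E+2u<0$ throughout $\cW_-$), so it suffices to check $s=u$, where the inequality reads $12u^2+6Eu+E^2-8>0$, a quadratic in $u$ with discriminant $-12(E^2-32)$, negative exactly when $E<E^*=-4\sqrt2$ and vanishing precisely at the ground-state configuration. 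So your ``main technical obstacle'' resolves just as you predict, and for $\a>2$ the additional $-(\a-2)(\a+2)r^{-\a}$ only helps. The uniform bound $K\le-\kappa(\delta)$ for $E\le E^*-\delta$ then follows from the strict sign of the discriminant together with your observation that $K$ stays bounded away from zero as $r\to0$.

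The paper avoids this algebra entirely. It instead proves the constrained variational identity $\inf\{E\mid K\ge0,\ W\le0\}=E^*$ (Proposition~\ref{prop:KWvar}) by solving the Lagrange systems $\nabla E=\lambda\nabla K$ and $\nabla E=\lambda\nabla K+\mu\nabla W$ and checking that every critical point other than $\pm Q$ has energy at least $E^*$; attainment of the infimum uses the compactness Lemmas~\ref{lem:away_sing}--\ref{lem:away_sing2}. This yields $K<0$ on $\cW_-$ immediately, and a separate compactness argument (Lemma~\ref{lem:kappadelta}) supplies the uniform bound. Your direct computation is shorter for this particular theorem and makes the role of the threshold $E^*$ algebraically transparent; the paper's variational formulation is more conceptual, handles all $\a\ge2$ uniformly without a case split, and is the version that gets re-used above the ground-state energy (Lemma~\ref{lem:var_DF}).
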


It is interesting to notice that for the \emph{strong} potential where $\a\geq 2$, the system exhibits significantly different behavior from the weak potential case, as can be readily seen in the Kepler problem, where $\a=2$ is a bifurcation critical value (cf. \cite{DeIb20}). Theorem \ref{thm:below} suggests that there are simple smooth boundaries that distinguish colliding orbits from global ones for $\a\geq2$ under some energy threshold, while for $\a<2$ there are no simple boundaries and indeed they seem to be fractal as suggested by the numerical results in the paper \cite{DeIb19_2}. Actually, it is well-known that the Newtonian Hill's problem has traversal homoclinic intersections, hence chaotic (cf. \cite{WaBuWi05}\cite{xia92a}). In the current paper, we will focus on the strong potential case.

\begin{theorem}[At the ground state energy]
\label{thm:at}For $\alpha\geq2$, let \begin{equation}
\begin{split}
\tilde{\cW}_+&=\{\Gamma=(x, y, \dot{x}, \dot{y}) | E(\Gamma)=E^*, W(\Gamma) > 0\}\\
\tilde{\cW}_-&=\{\Gamma=(x, y, \dot{x}, \dot{y}) | E(\Gamma)=E^*, W(\Gamma) \leq 0\}
\end{split}
\end{equation} then they are invariant. Solutions in $\tilde{\cW}_-$ either have a finite time collision or approach the ground states as $t\to\infty$, moreover, they approach the ground states only when they are on the stable manifolds of the ground states. Solutions in $\tilde{\cW}_+$ exist globally.
\end{theorem}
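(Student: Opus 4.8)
The plan is to combine the variational description of $E^*$ from Section~\ref{sec:ground} with a virial identity that degenerates exactly at $\pm Q$ on the critical energy surface, together with a local analysis at $\pm Q$. Set $I:=\frac{1}{2}(x^2+y^2)$; differentiating twice along (\ref{eq:hlp}) and substituting $\dot{x}^2+\dot{y}^2=2(E-V)$ gives the identity $\ddot I=2(E-V)+2(x\dot{y}-y\dot{x})+W$. Since $E$ is conserved, $\{E=E^*\}$ is invariant; and since $E^*$ is the minimum of $E$ on $\{W=0\}$, attained only at $\pm Q$, we have $\{E=E^*\}\cap\{W=0\}=\{\pm Q\}$. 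Hence a solution with $E\equiv E^*$ can never cross $\{W=0\}$: at a crossing time it would be at an equilibrium, so by uniqueness it would be the constant solution $\pm Q$, which has $W\equiv0$, a contradiction. This gives invariance of $\tilde{\cW}_{\pm}$. On $\tilde{\cW}_+$ we have $x^2>\alpha r^{-\alpha}$ for all $t$, forcing $r^{\alpha+2}>\alpha$ and hence $r$ bounded below; so no collision occurs, and since collision is the only possible singularity and the vector field grows at most linearly where $r$ is bounded below, the solution is global.

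For $\tilde{\cW}_-$ the configuration stays in the bounded region $\{V\le E^*\}\cap\{W\le0\}$, which lies in a disk $\{r\le R_0\}$; if the solution collides in finite time we are done, so assume it is global. The crux is the estimate: for $\alpha\ge2$, on $\{E=E^*\}\cap\{W\le0\}$ one has $\ddot I\le0$, with equality only at $\pm Q$. Granting this, a global non-colliding orbit $\gamma$ must stay bounded away from the origin — this is the strong-force property of the singularity when $\alpha\ge2$ (and for $\alpha>2$ it already follows from the estimate, since $\ddot I\le0$ keeps $\dot I$ monotone while approaching the origin would make $\dot I$ unbounded, forcing $I\to-\infty$) — hence $\gamma$ is precompact. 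Then $\dot I=x\dot{x}+y\dot{y}$ is bounded and non-increasing, hence convergent, and since $I$ is bounded its limit is $0$; thus $\dot I\to0$, so $\dot I\equiv0$ and therefore $\ddot I\equiv0$ on the $\omega$-limit set $\omega(\gamma)$. By the estimate $\omega(\gamma)\subseteq\{\pm Q\}$; being nonempty, compact and connected, $\omega(\gamma)$ equals $\{Q\}$ or $\{-Q\}$, so $\gamma$ converges to a ground state.

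For the last assertion, a direct computation shows that the linearization of (\ref{eq:hlp}) at $Q$ has eigenvalues $\pm\lambda_0,\pm i\omega_0$ with $\lambda_0,\omega_0>0$, so $Q$ is a saddle-center with one-dimensional $W^s(Q)$ and $W^u(Q)$ and a two-dimensional local center manifold $W^c_{\mathrm{loc}}$. On $W^c_{\mathrm{loc}}$ the conserved energy has a critical point at $Q$ whose Hessian is definite (the elliptic block of the Hamiltonian), so the level curves of $E|_{W^c_{\mathrm{loc}}}$ near $Q$ are closed and no orbit other than $Q$ itself converges to $Q$ inside $W^c_{\mathrm{loc}}$; by the center-stable manifold theorem the set of points whose forward orbit converges to $Q$ is then exactly $W^s_{\mathrm{loc}}(Q)$. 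Hence any solution in $\tilde{\cW}_-$ that converges to $Q$ eventually lies on $W^s_{\mathrm{loc}}(Q)$, hence on $W^s(Q)$; the argument for $-Q$ is the same, and the reverse implication is immediate.

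The principal obstacle is the estimate $\ddot I\le0$ on $\{E=E^*\}\cap\{W\le0\}$. Using $|x\dot{y}-y\dot{x}|\le r\sqrt{\dot{x}^2+\dot{y}^2}=r\sqrt{2(E^*-V)}$ reduces it to the purely configurational inequality $2(E^*-V)+2r\sqrt{2(E^*-V)}+W\le0$ on $\{V\le E^*\}\cap\{W\le0\}$, in which $2(E^*-V)+W=2E^*+2(\alpha+2)x^2+(2-\alpha)(\alpha+2)r^{-\alpha}$; the key point is that $\alpha\ge2$ is exactly the threshold for which the nonpositive term $(2-\alpha)(\alpha+2)r^{-\alpha}$ dominates the Coriolis contribution, so that the inequality holds, strictly off the zero-velocity curve $\{V=E^*\}$, while on $\{V=E^*\}$ it reduces to $W\le0$, which by the variational characterization is an equality only at $\pm Q$. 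Verifying this estimate (and that it is uniformly negative away from neighborhoods of $\pm Q$), together with the near-collision argument used above to keep non-colliding orbits away from the origin, are the steps that require genuine work and where the hypothesis $\alpha\ge2$ is essential.
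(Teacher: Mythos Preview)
Your overall architecture is sound and close to the paper's: both arguments hinge on showing that $K=\ddot I\le 0$ on $\{E=E^*,\,W\le 0\}$, with equality only at $\pm Q$. Once this is known, your $\omega$-limit-set argument is a clean alternative to the paper's case analysis on $\lim\dot I$; your treatment of invariance, of $\tilde{\cW}_+$, and of the stable-manifold clause are essentially the same as in the paper (the latter with a bit more detail on the center manifold, which is fine).

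The genuine gap is precisely the step you flag as ``the principal obstacle.'' Your reduction via $|x\dot y-y\dot x|\le r\sqrt{2(E^*-V)}$ is not merely a sufficient condition---since this Cauchy--Schwarz bound is attained (take the velocity tangent to the circle of radius $r$), the configurational inequality
\[
2(E^*-V)+2r\sqrt{2(E^*-V)}+W\le 0\quad\text{on }\{V\le E^*,\ W\le 0\}
\]
is exactly equivalent to the phase-space claim $K\le 0$ there. But you do not prove it: the sentence about the nonpositive term $(2-\alpha)(\alpha+2)r^{-\alpha}$ ``dominating the Coriolis contribution'' is a heuristic, not an argument, and the two-variable inequality is not obviously true away from the boundary pieces you check. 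The paper obtains the equivalent statement as Proposition~\ref{prop:KWvar}, $\inf\{E\mid K\ge 0,\ W\le 0\}=E^*$ (attained only at $\pm Q$), via a full Lagrange-multiplier analysis in $\R^4$ (Sections~\ref{sec:Lag1}--\ref{sec:Lag2}); this is where the hypothesis $\alpha\ge 2$ actually enters, through Lemmas~\ref{lem:away_sing}--\ref{lem:away_sing2} ensuring compactness of minimizing sequences. So your proposal is incomplete at its load-bearing step, and the route you sketch for it (a direct configurational estimate) is different from the paper's and not yet justified.

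A smaller issue: your claim that a global non-colliding orbit in $\tilde{\cW}_-$ stays bounded away from the origin is not the ``strong-force property'' and your parenthetical for $\alpha>2$ (that $\dot I$ would be unbounded near $r=0$) gives only an upper bound $|\dot I|\le r|\dot q|$, not a lower bound. The correct argument is the one you already have in hand: once $\ddot I\le 0$, the limit of $\dot I$ is nonnegative (else $I\to-\infty$), hence $\dot I>0$ throughout and $I(t)\ge I(0)>0$, which gives the uniform lower bound on $r$ directly.
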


When the energy is above the ground state energy, both bottle necks open up, and we no longer have invariant sets based on the sign of $W$. But we can still describe the motion of the solutions when they are near the ground states and the bottle necks. 

\begin{theorem}[Above the ground state energy]
\label{thm:above}
For $\alpha\geq 2$, there exists $\epsilon>0$, so that any solution $\psi(t)$ with $E(\psi)<E^*+\epsilon^2$ either stays inside a $2\epsilon$ ball of the ground states, or ejected out of the ball. Moreover, if at the time of the exit of the ball, the sign of $W$ is negative, then the solution collides with the origin in finite time.
\end{theorem}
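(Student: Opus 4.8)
The plan is to prove Theorem~\ref{thm:above} by an ejection/one--pass argument around the equilibria $\pm Q$, welded to a virial estimate that forces a collision once an orbit comes close to the origin. \emph{Step 1 (saddle--center structure and the colliding unstable branch).} First I would linearize \eqref{eq:hlp} at $\pm Q$. A direct computation gives $D^{2}V(\pm Q)=\mathrm{diag}\big(-(\alpha+2)^{2},\,\alpha+2\big)$, so the linearized flow has characteristic polynomial $\lambda^{4}+\big((\alpha+2)-(\alpha+2)^{2}+4\big)\lambda^{2}-(\alpha+2)^{3}=0$; the product of its roots in $\lambda^{2}$ is $-(\alpha+2)^{3}<0$, hence one eigenvalue pair is real, $\pm\lambda$ with $\lambda>0$, and the other is imaginary, $\pm i\omega$. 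Thus each $\pm Q$ is a saddle--center: it has a one--dimensional unstable manifold $W^{u}$, a one--dimensional stable manifold $W^{s}$ and a two--dimensional center manifold, all lying in $\{E=E^{*}\}$ (inside that level the center part collapses to the point $\pm Q$). Since $\nabla W(\pm Q)\neq 0$, one branch of $W^{u}$ leaves $\pm Q$ into $\{W>0\}$ and the other, call it $W^{u}_{-}$, into $\{W<0\}$. Forward in time $W^{u}_{-}\subset\tilde{\cW}_{-}$, which is invariant by Theorem~\ref{thm:at}; moreover $W^{u}_{-}$ does not converge to $\pm Q$ (it is an unstable--manifold orbit) and is not contained in $W^{s}$, so Theorem~\ref{thm:at} forces $W^{u}_{-}$ to end in a finite--time collision. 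Fixing a small $r_{1}>0$, the reference orbit $W^{u}_{-}$ enters $\{r<r_{1}/2\}$ at a finite time $T_{1}$, staying on the corresponding interval in a fixed compact set bounded away from the origin.

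\emph{Step 2 (virial near the origin).} From the equations of motion, with $I=x^{2}+y^{2}$ and $J=x\dot y-y\dot x$, one gets the Hill virial identity $\ddot I=2|\dot q|^{2}+4J+2W$, and eliminating $|\dot q|^{2}$ by the Jacobi integral turns this into $\ddot I=4E+4(\alpha+2)x^{2}+4J+2(\alpha+2)(2-\alpha)r^{-\alpha}$. Using $x^{2}\le r^{2}$, $|J|\le r|\dot q|$ and $r^{2}|\dot q|^{2}=2r^{2}(E-V)\le 2(\alpha+2)r^{2-\alpha}+O(r^{2})$, one checks that for $\alpha\ge 2$ the last term is $\le 0$ and, when $\alpha>2$, dominates $4J=O(r^{1-\alpha/2})$ as $r\to 0$, while when $\alpha=2$ it vanishes and the strictly negative $4E$ (recall $E<E^{*}+\epsilon^{2}<0$, with $E^{*}$ explicit) outweighs the bounded $4J$ on $\{r\le r_{1}\}$; in all cases there is $r_{1}>0$ with $\ddot I\le -c_{0}<0$ on $\{0<r\le r_{1},\ E<E^{*}+\epsilon^{2}\}$. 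Hence any solution of this energy that enters $\{r\le r_{1}\}$ from outside has $\dot I\le 0$ at entry and $I$ strictly concave afterwards, so $I$ vanishes in finite time: the solution collides.

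\emph{Step 3 (ejection lemma and gluing).} Now fix $\epsilon_{0}$ small and take $\epsilon<\epsilon_{0}$, $E(\psi)<E^{*}+\epsilon^{2}$. In hyperbolic--elliptic coordinates $(a,b,c)$ near $\pm Q$ ($a$ unstable, $b$ stable, $c$ the center pair), the elliptic part of $E-E^{*}$ is sign--definite, so the center amplitude of $\psi$ is $O(\epsilon)$; exit from the $2\epsilon$--ball cannot then be produced by the center oscillation alone, and since $\psi$ moves outward at the first exit time $t_{1}$ the stable coordinate cannot dominate there either, so the exit is driven by $a$, which is thereafter monotone in $|a|$ with fixed sign (the one--pass property: $\psi$ does not re-enter $B_{\epsilon}(\{\pm Q\})$). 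The sign of $W(\psi(t_{1}))$ records the sign of $a$: if it is negative, $\psi$ is committed to the branch $W^{u}_{-}$ and shadows it, with phase--space error $O(\epsilon)$, from the exit out to the base of the reference orbit of Step~1. By continuous dependence on the bounded interval of length $T_{1}$ on which that reference orbit stays in a compact set away from the origin, the shadowing error is $<r_{1}/2$ once $\epsilon$ is small, so $\psi$ enters $\{r\le r_{1}\}$ from outside; Step~2 then yields a finite--time collision, which is the assertion of the theorem.

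\emph{Main obstacle.} The heart of the proof is Step~3: making the ejection lemma work when, at these energies, the center direction is genuinely active with amplitude comparable to the radius $2\epsilon$ of the ball. One must show that exit is nonetheless governed by the monotone unstable coordinate, that the sign of $W$ at exit faithfully identifies which branch of $W^{u}$ the orbit commits to, and that the subsequent shadowing error stays $O(\epsilon)$ uniformly along the long passage near $\pm Q$ — this is exactly where the gap between the radii $2\epsilon$ and $\epsilon$, and the monotonicity of $|a|$, are used. A secondary difficulty is that the virial bound of Step~2 is tight precisely at $\alpha=2$ and near $L_{1},L_{2}$, which is why the argument is funneled through the small--$r$ region by shadowing $W^{u}_{-}$ rather than trying to work inside an invariant set of the form $\{W\le 0\}$ — such a set no longer exists once the energy exceeds $E^{*}$.
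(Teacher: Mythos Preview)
Your route and the paper's diverge exactly where you flag the ``main obstacle.'' The paper never shadows $W^{u}_{-}$. After an Ejection Lemma (essentially your Step~3 exit mechanism, formalized via the symplectic decomposition $X=\lambda_{+}\xi_{+}+\lambda_{-}\xi_{-}+\gamma$), the paper invokes the variational fact $\inf\{E\mid K\ge 0,\ W\le 0\}=E^{*}$ (Proposition~\ref{prop:KWvar}) to obtain a \emph{global} bound: whenever $E(\psi)<E^{*}+\epsilon(\delta)^{2}$, $d_{Q}(\psi)\ge\delta$ and $W(\psi)<0$, one has $K(\psi)\le -\kappa(\delta)<0$ (Lemma~\ref{lem:var_DF}). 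Combined with the Ejection Lemma's estimate $-K(\psi(t))\gtrsim d_{Q}(\psi(t))-C_{*}R$ in the hyperbolic region near $\pm\tilde{Q}$, this makes $K$ uniformly negative along the \emph{entire} post-exit trajectory, and integrating $\ddot I=K$ over a putative return interval yields the no-return property and the collision in one stroke (Theorem~\ref{thm:onepass}). No reference orbit, no small-$r$ core, no shadowing.

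Your approach trades the two-constraint Lagrange analysis behind Proposition~\ref{prop:KWvar} for a shadowing of $W^{u}_{-}$ followed by a local virial in $\{r\le r_{1}\}$; Step~2 is correct and Step~1 is a clean use of Theorem~\ref{thm:at}. The gap is the shadowing claim in Step~3. At the moment $\psi$ leaves the linearization ball $d_{Q}=\delta_{X}$, its distance to the nearest point of $W^{u}_{-}$ is not $O(\epsilon)$ but $O(\epsilon)+O(\delta_{X}^{2})$, since $W^{u}_{-}$ agrees with the $\xi_{+}$-axis only to first order. The continuous-dependence constant from there to $\{r<r_{1}/2\}$ is a fixed number depending on $\delta_{X}$ (and blows up as $\delta_{X}\to 0$, the passage time being $\sim k^{-1}\log(1/\delta_{X})$), so ``error $<r_{1}/2$ once $\epsilon$ is small'' reduces to an inequality between \emph{fixed} constants that you have not verified. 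This is repairable---for instance by matching along the nonlinear unstable manifold rather than the linear $\xi_{+}$-axis, or by carrying Ejection-Lemma-type control further out---but as written the $O(\epsilon)$ shadowing is asserted, not proved. The paper's global $K$-bound sidesteps this bookkeeping entirely, which is precisely what the restriction $\alpha\ge 2$ buys.
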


We remark that the finite time collision conclusion in Theorem \ref{thm:above} does not hold for the weak potential. In particular, for the Newtonian case, there are heteroclinic intersections between the two ground states, and solutions may exit the ball with $W<0$ and then come back to the ball. For instance, see figure 7 of \cite{WaBuWi05}. 

On the other hand, if at the time of exit the sign of $W$ is positive, we conjecture that the solution will escape to infinity. This is somehow supported by the results in the Newtonian case. In \cite{LMaSi85}, the authors showed that the unstable manifolds for the ground states in the outer region (i.e. $W>0$) goes forward to infinity for the Newtonian case. Their proof relies on the increasing of the argument along the orbits for a suitable complex coordinate system, which is the main theorem of \cite{Mc69}. However, for the strong potential, it seems that we don't have the result of \cite{Mc69}. We will explain more about this at the end of section \ref{sec:above}. Nonetheless, we did some numerical computations to give evidence of the no-return property (hence the escape) for the positive case for $\alpha\geq2$, see the companion paper \cite{DeIb19_2}.  In this paper, we will leave the ``no-return'' property of the positive case (i.e. $W>0$ at the exit) as a conjecture. 

Also note that in the current paper, we did not describe the dynamics of the global solutions. This part is in a subsequent work in preparation \cite{DeIbNa19}. The current paper is more on the characterization of the global existence and singularity of the solutions.

The methods of the paper are motivated from PDE, in particular, nonlinear dispersive equations (e.g. Klein-Gordon, NLS) as studied by Nakanishi-Schlag \cite{NaSc11}\cite{NaSc12} and many others. Using this method, the authors have also given some conditional characterizations about the global existence and singularity of the general N-body problem in \cite{DeIb20}. It is important to emphasize that Nakanishi-Schlag \cite{NaSc11} extended the results of Theorem \ref{thm:above} to the case where $W$ is positive, using the infinite-dimensional aspect of the Klein-Gordon equation. Indeed, the striking difference between the finite-dimensional and infinite-dimensional models shows up. For example, small initial data leads to global solutions for the Klein-Gordon equation (cf. Lemma 2.2 \cite{NaSc11}), which is not true in the Hill's problem, as clearly seen in Theorem \ref{thm:below} for small initial data they collide.

The paper is organized as follows. In section \ref{sec:ground}, we give the definition of the ground state energy. In section \ref{sec:below}, \ref{sec:at}, \ref{sec:above}, we give the proofs of the above Theorems. In the last section (appendix) we give a derivation of the Hill's type lunar problem from the general three-body problem.  In the paper, we shall use HLP to refer to the Hill's lunar problem (\ref{eq:hlp}).

\section{Defining the ground state}\label{sec:ground}
In PDE, the ground states are the solitons with the lowest energy. In the N-body problem, the solutions that play a similar role to the solitons are the relative equilibria, i.e. equilibrium under a uniform rotating frame. For the HLP, the only relative equilibria are given by the two Lagrange points, thus we shall expect them to be the ground states. In particular, we consider the following variational problem in $\R^4$: \begin{equation}\inf\{E(x,y,\dot{x},\dot{y})| W=0\}, \end{equation} where $W=-xV_x-yV_y=(\a+2)(x^2-\frac{\alpha}{r^\alpha})$.

For reasons why we consider the above type of variational problem, we refer the readers to \cite{DeIb20}.

\begin{lemma}
\label{lem:ground}
Let $E^*:=\inf\{E(x,y,\dot{x},\dot{y})| W=0\}$, then $E^*$ is finite and it's achieved exactly by the relative equilibira $\pm Q=(\pm\alpha^{\frac{1}{\alpha+2}},0,0,0)$.
\end{lemma}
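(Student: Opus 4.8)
The plan is to reduce the four-dimensional variational problem to a two-dimensional one by eliminating the velocities, then to a one-dimensional problem using the structure of $V$ and $W$. First I would observe that $E(x,y,\dot x,\dot y)=\frac12(\dot x^2+\dot y^2)+V(x,y)\geq V(x,y)$, with equality exactly when $\dot x=\dot y=0$; since the constraint $W=0$ only involves $(x,y)$, any minimizing configuration must have zero velocity, so $E^*=\inf\{V(x,y)\mid W(x,y)=0\}$. The constraint set $\{W=0\}=\{(x,y)\mid x^2=\alpha/r^\alpha\}$ is nonempty (it contains $(\pm\alpha^{1/(\alpha+2)},0)$) and closed.

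Next I would parametrize the constraint set. Writing $r^2=x^2+y^2$ and using $x^2=\alpha/r^\alpha$ on the constraint, one gets $y^2=r^2-\alpha/r^\alpha$, which forces $r\geq \alpha^{1/(\alpha+2)}=:r_0$, and on this set
\EQN{
V=-\frac{\a+2}{2}x^2-\frac{\a+2}{r^\a}=-\frac{\a+2}{2}\cdot\frac{\alpha}{r^\a}-\frac{\a+2}{r^\a}=-(\a+2)\Bigl(\frac{\alpha}{2}+1\Bigr)\frac{1}{r^\a}=-\frac{(\a+2)^2}{2}\cdot\frac{1}{r^\a}.
}
This is a strictly increasing function of $r$ on $[r_0,\infty)$, so it attains its minimum over the constraint set precisely at $r=r_0$, giving $E^*=-\frac{(\a+2)^2}{2}r_0^{-\a}=-\frac{(\a+2)^2}{2}\alpha^{-\a/(\a+2)}$, which is finite. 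When $r=r_0$ we have $y^2=r_0^2-\alpha/r_0^\alpha=0$, hence $y=0$ and $x^2=r_0^2$, i.e. $x=\pm\alpha^{1/(\a+2)}$; these are exactly the points $\pm Q$ (with zero velocity). Conversely any minimizer must have $r=r_0$, $y=0$, zero velocity, so the infimum is achieved exactly at $\pm Q$.

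The computation above is essentially routine; the only mild subtlety — and the place I would be most careful — is justifying that the infimum is actually attained rather than merely approached, i.e. that the constraint set restricted to the sublevel set of $V$ is compact. This follows because on $\{W=0\}$ we showed $V=-\frac{(\a+2)^2}{2}r^{-\a}$ is proper as $r\to\infty$ (it tends to $0$) and bounded below, so any $V$-minimizing sequence has $r$ bounded away from $\infty$, and also bounded away from the origin since $r\geq r_0>0$ on the whole constraint set; thus such a sequence lives in a compact annulus and has a convergent subsequence whose limit still satisfies $W=0$ by continuity. Alternatively, and more cleanly, the explicit reduction to the single increasing function $r\mapsto -\frac{(\a+2)^2}{2}r^{-\a}$ on $[r_0,\infty)$ makes attainment at the left endpoint immediate, bypassing any compactness argument. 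I would present the explicit-reduction version, since it simultaneously gives finiteness of $E^*$, the exact minimizers, and the identification $E^*=V(\pm Q)$ in one stroke.
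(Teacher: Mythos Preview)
Your argument is correct. The approach, however, differs from the paper's. After the common first step of eliminating the velocities, the paper invokes Lagrange multipliers $\nabla V=\lambda\nabla W$ on the constraint $W=0$ and asserts that the only solution has $\lambda=0$, so that minimizers coincide with the critical points of $V$. You instead substitute the constraint $x^2=\alpha/r^\alpha$ directly into $V$, reducing everything to the single-variable function $r\mapsto -\tfrac{(\alpha+2)^2}{2}r^{-\alpha}$ on $[r_0,\infty)$, whose monotonicity immediately yields the minimum, its value, and the uniqueness of the minimizers. Your route is more elementary and fully self-contained: it avoids having to solve (or justify) the Lagrange system and delivers the explicit value $E^*=-\tfrac{(\alpha+2)^2}{2}\alpha^{-\alpha/(\alpha+2)}$ in the same stroke. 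The paper's version is terser and foreshadows the more elaborate Lagrange-multiplier computations used later in Section~\ref{sec:Lag1}--\ref{sec:Lag2}, but for this particular lemma your explicit reduction is arguably cleaner.
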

\begin{proof}
Since $E=\frac{1}{2}(\dot{x}^2+\dot{y}^2)+V(x,y)$, its minimum with the constraint $W=0$ must satisfy $\dot{x}=\dot{y}=0$. Next it's easy to see the minimum of $V(x,y)$ with $W(x,y)=0$ cannot be $-\infty$, thus we use the Lagrange multiplier $\nabla V=\lm \nabla W$, the only solution of which is $\lm=0$, i.e. the minimum is achieved at the critical points of $V$. 
\end{proof}

\begin{definition}[Ground state]
We call $E^*$ the ground state energy of the HLP, and the relative equilibria $\pm Q$ the ground states. Moreover, we have $$E^*=E(\pm Q)=-\frac{1}{2}(\a+2)^2\a^{-\frac{\a}{\a+2}}.$$
\end{definition}

Now, in order to study the fate of the solutions, we investigate the behavior of the distance between $(x,y)$ and the singular point $(0,0)$. In particular, let $I:=\frac{1}{2}(x^2+y^2)$ be the \emph{moment of inertia}. If $(x(t), y(t))$ is a solution to (\ref{eq:hlp}), then \begin{equation} \frac{d^2I}{dt^2} =\dot{x}^2+\dot{y}^2+2(x\dot{y}-\dot{x}y)-xV_x-yV_y.  \end{equation}
Let \begin{equation}\label{eq:Kdef}K(x,y,\dot{x},\dot{y}):=\dot{x}^2+\dot{y}^2+2(x\dot{y}-\dot{x}y)-xV_x-yV_y,\end{equation} then the sign of $K$ will describe the behavior of $I$ along a solution, hence the fate. In the rest of this section, we give some variational properties of $K$ in terms of the ground state energy $E^*$, which will play an important role in the proof of our main theorems. In particular, we prove the following proposition:

\begin{proposition}\label{prop:KWvar}
For $\a\geq 2$,\[\inf\{E | K\geq0, W\leq 0\}=\inf\{E | K=0, W=0\}=E^*.\]
\end{proposition}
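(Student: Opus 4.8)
The plan is to reduce this $4$-dimensional constrained minimization to a one-variable calculus inequality, using the pointwise identity $V+\tfrac{W}{2}=-\tfrac{(\a+2)^2}{2r^\a}$ (immediate from the formulas for $V$ and $W$, with $r^2=x^2+y^2$) and the fact that $E^*=-\tfrac{(\a+2)^2}{2r_Q^\a}$, where $r_Q:=\a^{1/(\a+2)}$. First I would dispose of the cheap inclusions: the point $\pm Q=(\pm r_Q,0,0,0)$ has zero velocity and lies at a critical point of $V$, so $K(\pm Q)=W(\pm Q)=0$ and $E(\pm Q)=E^*$; since $\{K=0,W=0\}\subseteq\{K\ge0,W\le0\}$, this gives at once $\inf\{E\mid K=0,W=0\}\le E^*$, $\inf\{E\mid K\ge0,W\le0\}\le E^*$, and $\inf\{E\mid K=0,W=0\}\ge\inf\{E\mid K\ge0,W\le0\}$. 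Hence the whole statement follows once we prove the implication
\[ K\ge0,\ W\le0\ \Longrightarrow\ E\ge E^*. \]

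To prove this, fix a configuration $q=(x,y)$ with $W(x,y)\le0$ and minimize the kinetic energy over admissible velocities. With $L:=x\dot y-\dot x y$, the constraint $K\ge0$ reads $|\dot q|^2\ge -W-2L$, while Cauchy--Schwarz gives $|\dot q|^2\ge L^2/r^2$. Taking the larger of the two lower bounds and minimizing over $L$ (the minimizing value $L=r\sqrt{r^2-W}-r^2$ is legitimate because $W\le0$ forces $\sqrt{r^2-W}\ge r$) yields $\tfrac12|\dot q|^2\ge\tfrac12\big(\sqrt{r^2-W}-r\big)^2$. Substituting this together with $V=-\tfrac{W}{2}-\tfrac{(\a+2)^2}{2r^\a}$ into $E=\tfrac12|\dot q|^2+V$, and writing $\mu:=\sqrt{r^2-W}\ge r$, we get for every admissible state
\[ E\ \ge\ \mu^2-r\mu-\frac{(\a+2)^2}{2r^\a}. \]
One checks that the right-hand side is decreasing in $W$ on $\{W\le0\}$ (its $W$-derivative is $-1+\tfrac{r}{2\mu}\le-\tfrac12$), so for fixed $r$ its minimum over the positions on that circle with $W\le0$ occurs at the largest admissible $W$, namely $W=\min\{0,W_{\max}(r)\}$ with $W_{\max}(r)=(\a+2)(r^2-\a r^{-\a})$, achieved on the $x$-axis.

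Two cases remain. If $r\ge r_Q$ then $W_{\max}(r)\ge0$, so $W=0$ is admissible, $\mu=r$, and the bound collapses to $E\ge-\tfrac{(\a+2)^2}{2r^\a}\ge-\tfrac{(\a+2)^2}{2r_Q^\a}=E^*$; note this half uses only $W\le0$, not $K\ge0$. If $0<r<r_Q$ then the optimal $W$ is $W_{\max}(r)<0$, so $\mu^2=h(r):=(\a+2)\a\,r^{-\a}-(\a+1)r^2$ and the bound reduces to the one-variable inequality
\[ \phi(r):=h(r)-r\sqrt{h(r)}-\frac{(\a+2)^2}{2r^\a}\ \ge\ E^*\qquad\text{on }(0,r_Q], \]
which holds with equality at $r=r_Q$ since $h(r_Q)=r_Q^2$.

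I expect the verification of this last inequality to be the main obstacle, and it is exactly the place where $\a\ge2$ is needed. For $\a<2$ the leading behavior of $\phi$ near $r=0$ is $\tfrac{(\a+2)(\a-2)}{2}\,r^{-\a}\to-\infty$, so the proposition genuinely fails there. For $\a\ge2$ I would prove that $\phi$ is nonincreasing on $(0,r_Q]$, whence $\phi\ge\phi(r_Q)=E^*$. When $\a=2$ this is immediate: the $r^{-\a}$ contributions cancel and $\phi(r)=-3r^2-\sqrt{8-3r^4}$, for which $\phi'(r)\le0$ is equivalent to $r^4\le2$, i.e. $r\le r_Q=2^{1/4}$. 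For general $\a\ge2$ one must check $\phi'\le0$ on $(0,r_Q)$ directly, using $\a\ge2$ to dominate the $r\sqrt{h(r)}$ term by the $r^{-\a}$ part of $h$; this monotonicity estimate is the technical heart of the argument. Granting it, the implication above holds, and combining with the cheap inclusions from the first paragraph, all three quantities in the statement equal $E^*$.
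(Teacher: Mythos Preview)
Your route is genuinely different from the paper's. The paper handles the constrained minimization by Lagrange multipliers with slack variables: it sets $\tilde K=K-p^2$, $\tilde W=W+q^2$, solves $\nabla E=\lambda\nabla\tilde K+\mu\nabla\tilde W$, and runs through the four cases according to whether $p,q$ vanish, each time reducing to the classification of critical points of $E$ on $\{K=0\}$ worked out beforehand (with several nontrivial sub-cases and energy comparisons). Your argument instead eliminates the velocity variables outright using $K=|\dot q|^2+2L+W$ and the Cauchy--Schwarz bound $|\dot q|^2\ge L^2/r^2$, optimizes in $L$, and then in the position, reducing everything to the single one-variable inequality $\phi(r)\ge E^*$ on $(0,r_Q]$. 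This is shorter and conceptually cleaner, and it makes the role of the hypothesis $\alpha\ge2$ completely explicit via the sign of the leading coefficient $\tfrac{(\alpha+2)(\alpha-2)}{2}$ of $r^{-\alpha}$ in $\phi$.

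The one gap you flag---the monotonicity of $\phi$ for general $\alpha>2$---is in fact immediate, so your sketch closes. With $A:=\alpha(\alpha+2)(\alpha-2)\ge0$ and $B:=\alpha+1$ one has
\[
\phi(r)=\tfrac{A}{2\alpha}\,r^{-\alpha}-Br^{2}-r\sqrt{h(r)},\qquad h(r)=\alpha(\alpha+2)r^{-\alpha}-Br^{2},
\]
and since $(r\sqrt{h})'=(2h+rh')/(2\sqrt h)=-(Ar^{-\alpha}+4Br^{2})/(2\sqrt h)$, a direct computation gives the factorization
\[
\phi'(r)=\bigl(Ar^{-\alpha-1}+4Br\bigr)\!\left(\frac{r}{2\sqrt{h(r)}}-\frac{1}{2}\right).
\]
The first factor is strictly positive for $\alpha\ge2$, so $\phi'\le0$ is equivalent to $h(r)\ge r^{2}$, i.e. $(\alpha+2)(\alpha r^{-\alpha}-r^{2})\ge0$, i.e. $r^{\alpha+2}\le\alpha$, which is exactly $r\le r_Q$. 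Thus $\phi$ is nonincreasing on $(0,r_Q]$ with equality of $\phi'$ only at $r_Q$, and your implication $K\ge0,\ W\le0\Rightarrow E\ge E^*$ follows; together with the trivial inclusions you already recorded, this proves the proposition. The same factorization recovers your $\alpha=2$ check (then $A=0$ and the condition is $r^{4}\le2$), so there is no need to treat that case separately.
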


The rest of this section is dedicated to the proof of Proposition \ref{prop:KWvar}. First we can show that for $\alpha\geq2$, if $E$ is bounded and $|K|$ is small, then $(x, y)$ must be away from the singular point $(0,0)$, see Lemma \ref{lem:away_sing} and Lemma  \ref{lem:away_sing2}.

 \begin{lemma}\label{lem:away_sing}
For $\alpha>2$, there exists $c>0$ such that if $E(\Gamma)<1$ and $|K(\Gamma)|<c$, then $r\geq c$.
 \end{lemma}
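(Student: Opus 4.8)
The plan is to eliminate the velocity from $K$ using the energy, so that the only genuinely dangerous object is the singular potential term, and then to exploit the sign of its coefficient when $\alpha>2$. Writing $\dot x^2+\dot y^2=2(E-V)$ in the definition \eqref{eq:Kdef} of $K$ gives
\[
K=2E+(-2V+W)+2(x\dot y-\dot x y)=2E+2(\alpha+2)x^2+\frac{(2-\alpha)(\alpha+2)}{r^\alpha}+2(x\dot y-\dot x y),
\]
where I used the elementary identity $-2V+W=2(\alpha+2)x^2+(2-\alpha)(\alpha+2)r^{-\alpha}$ (immediate from the formulas for $V$ and $W$). The point of this rewriting is that the coefficient of $r^{-\alpha}$ is now negative precisely when $\alpha>2$.

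Next I would use the hypothesis $E(\Gamma)<1$ to bound every remaining term by an explicit function of $r$. One has $2E<2$ and $2(\alpha+2)x^2\le 2(\alpha+2)r^2$. For the rotational term, Cauchy--Schwarz gives $|x\dot y-\dot x y|\le r\sqrt{\dot x^2+\dot y^2}$, while $\dot x^2+\dot y^2=2(E-V)<2+(\alpha+2)r^2+2(\alpha+2)r^{-\alpha}$ again by $E<1$. Collecting these estimates,
\[
K< g(r):=2+2(\alpha+2)r^2+\frac{(2-\alpha)(\alpha+2)}{r^\alpha}+2r\sqrt{2+(\alpha+2)r^2+\frac{2(\alpha+2)}{r^\alpha}}
\]
for any $\Gamma$ with $E(\Gamma)<1$, where $r=r(\Gamma)$.

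It then remains to analyze $g(r)$ as $r\to0^+$. Among the $r$-dependent terms, the square-root term is $O(r^{1-\alpha/2})$ (the interior is dominated by $2(\alpha+2)r^{-\alpha}$), while the other singular term behaves like $(2-\alpha)(\alpha+2)r^{-\alpha}$; since the exponent $-\alpha$ is smaller than $1-\alpha/2$ the latter dominates, and since $\alpha>2$ its coefficient $2-\alpha$ is negative, so $g(r)\to-\infty$. Hence there is $r_1>0$ with $g(r)<-1$ for all $0<r<r_1$. Setting $c:=\min\{r_1,1\}$: if $E(\Gamma)<1$ and $r<c$, then $K<-1$, so $|K|>1\ge c$, which is the contrapositive of the statement.

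The only delicate point, and the place where $\alpha>2$ is genuinely needed, is the competition in the third step between the negative $r^{-\alpha}$ term coming from $-2V+W$ and the rotational term $2(x\dot y-\dot x y)$, which near collision can be as large as $\sim r^{1-\alpha/2}$ because the kinetic energy blows up. For $\alpha=2$ the coefficient $2-\alpha$ vanishes and the argument degenerates (this is presumably why that endpoint is treated separately in Lemma \ref{lem:away_sing2}), and for $\alpha<2$ the sign is reversed so the method fails entirely, consistent with the fractal behavior mentioned in the introduction.
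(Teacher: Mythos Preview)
Your proof is correct. The paper argues by contradiction with a sequence: if $r_n\to 0$ and $|K(\Gamma_n)|\to 0$, then the dominant balance in the expression for $K$ forces $\dot x_n^2+\dot y_n^2\sim \alpha(\alpha+2)r_n^{-\alpha}$, and substituting this into $E$ gives $E(\Gamma_n)\sim \frac{(\alpha+2)(\alpha-2)}{2}r_n^{-\alpha}\to+\infty$, contradicting $E<1$. Your argument is a direct, quantitative version of the same idea: the algebraic identity $K=2E+(-2V+W)+2(x\dot y-\dot x y)$ makes the crucial coefficient $(2-\alpha)(\alpha+2)r^{-\alpha}$ explicit from the start, and the Cauchy--Schwarz bound on the rotational term replaces the paper's informal ``$\sim$'' estimates. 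What you gain is an explicit upper bound $g(r)$ and hence, in principle, a computable constant $c$; what the paper's sequence argument gains is brevity (no need to track the lower-order terms carefully). The underlying mechanism---that for $\alpha>2$ the potential contributions to $K$ and $2E$ differ by a term with a \emph{negative} blow-up at $r=0$, which the cross term cannot compensate---is identical in both proofs.
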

 \begin{proof}
Suppose not, then there exists a sequence $\Gamma_n\in\R^4$ such that $E(\Gamma_n)<1$, $|K(\Gamma_n)|<\frac{1}{n}$ and $r_n< \frac{1}{n}$. Notice that 
 \begin{equation}
 E(\Gamma)=\frac{1}{2}(\dot{x}^2+\dot{y}^2)-\frac{\a+2}{2}x^2-\frac{\a+2}{r^\alpha},
 \end{equation}  thus $|K(\Gamma_n)|<\frac{1}{n}$ and $r_n< \frac{1}{n}$ imply that $\dot{x}_n^2+\dot{y}_n^2\sim (\a+2)\alpha n^{\alpha}$, thus $E(\Gamma_n)\sim (\a+2)(\frac{\alpha}{2}-1)n^\alpha\to\infty$, contradiction.   \end{proof}

Notice that the bound $E<1$ is not essential, in fact, the lemma can be extended to $E<a$ for any positive constant $a$. For our purpose of the paper, $a$ doesn't have to be positive. The energy we are interested in will only be slightly larger than that of $E^*$, when $\a=2$, $E^*=-4\sqrt{2}$, thus we can state a similar lemma for $\a=2$ as

\begin{lemma}\label{lem:away_sing2}
For $\alpha=2$, there exists $c>0$ such that if $E(\Gamma)<-1$ and $|K(\Gamma)|<c$, then $r\geq c$. 
 \end{lemma}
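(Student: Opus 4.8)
The plan is to argue by contradiction along a sequence, exactly as in Lemma~\ref{lem:away_sing}, but with one essential modification forced by the borderline nature of $\alpha=2$. If the statement failed, then for each $n$ I could pick $\Gamma_n=(x_n,y_n,\dot x_n,\dot y_n)$ with $E(\Gamma_n)<-1$, $|K(\Gamma_n)|<1/n$ and $r_n<1/n$. Specializing to $\alpha=2$, where $V=-2x^2-4/r^2$ and $W=4x^2-8/r^2$, the defining relation \eqref{eq:Kdef} becomes $K=v^2+2L+4x^2-8/r^2$, with $v^2:=\dot x^2+\dot y^2$ and $L:=x\dot y-\dot x y$. As in the earlier lemma, $|K_n|\to0$ and $r_n\to0$ give at once $v_n^2+2L_n=8/r_n^2-4x_n^2+K_n\to+\infty$, so the motion is fast near collision.

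The reason the $\alpha>2$ proof does not transfer verbatim is precisely the source of the main difficulty, and identifying it is the first real step. For $\alpha>2$ the gyroscopic term $2L$ is of lower order than $v^2\sim\alpha(\alpha+2)/r^{\alpha}$ and may be discarded, after which the mismatch between the coefficients of $1/r^{\alpha}$ in $K$ and in $E$ produces $E\sim(\alpha+2)(\tfrac{\alpha}{2}-1)/r^{\alpha}\to+\infty$. At $\alpha=2$ the coefficient $\tfrac{\alpha}{2}-1$ is exactly zero: the divergent parts $\tfrac12 v^2$ and $-4/r^2$ cancel, and $L$ can no longer be neglected. I would therefore \emph{retain} $L$ and eliminate $v^2$ between $E=\tfrac12 v^2-2x^2-4/r^2$ and the formula for $K$. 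Subtracting $2E=v^2-4x^2-8/r^2$ from $K=v^2+2L+4x^2-8/r^2$ gives $K-2E=2L+8x^2$, i.e. the clean identity $E=\tfrac12 K-L-4x^2$. Along the sequence this reads $E_n=-L_n+o(1)$, so the limiting energy is governed entirely by the angular momentum.

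To close the argument I would control $L_n$ by Cauchy--Schwarz, $|L|=|x\dot y-\dot x y|\le r\,v$, together with $v^2=2E+4x^2+8/r^2$; since $E<0$ this yields $r^2v^2=8+2Er^2+4x^2r^2\le 8+4r^4$, hence $|L_n|\le r_nv_n\le\sqrt{8+4r_n^4}\to\sqrt8$. Combined with $E_n=-L_n+o(1)$, the energy hypothesis forces $-E_n=L_n+o(1)$ to be controlled by this Cauchy--Schwarz ceiling, and the contradiction comes from confronting the lower bound on $-E$ coming from the hypothesis with the limiting value $\sqrt8$ of $r_nv_n$; once the angular momentum is pinned, it forces $r$ to stay bounded away from the origin. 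The delicate and genuinely new point, absent for $\alpha>2$, is exactly this: the gyroscopic term must be kept and estimated \emph{sharply}, since it now carries the entire contribution to the energy at leading order. I expect the whole crux to lie in this angular-momentum estimate and in verifying that the comparison against the threshold $\sqrt8=2\sqrt2$ closes; everything else is routine $o(1)$ bookkeeping using $r_n\to0$, $x_n^2\le r_n^2$ and $|K_n|\to0$.
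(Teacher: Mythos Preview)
Your approach is the right one and is in fact more careful than the paper's. You correctly identify that at $\alpha=2$ the leading singular contributions to $E$ and $K$ cancel, so the gyroscopic term $L=x\dot y-\dot x y$ must be retained; the identity $E=\tfrac12 K-L-4x^2$ and the Cauchy--Schwarz bound $|L|\le rv$ with $r^2v^2=8+2Er^2+4x^2r^2$ are both correct. The paper's own argument, by contrast, asserts that $|K_n|<1/n$ and $r_n<1/n$ force $\bigl|\dot x_n^2+\dot y_n^2-8/r_n^2\bigr|\lesssim 1/n$ and hence $E_n\to 0$; that implication silently discards the term $2L$, which is of order one, and is not justified.

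However, your final comparison does \emph{not} close with the stated threshold $E<-1$. From $E_n=-L_n+o(1)$ and $|L_n|\le 2\sqrt2+o(1)$ you obtain only $-E_n\le 2\sqrt2+o(1)$, which is perfectly compatible with $-E_n>1$ since $1<2\sqrt2$. Indeed, the lemma as written is false: for any small $r\in(0,1)$ take $x=0$, $y=r$, $\dot x=-2/r$, $\dot y=\sqrt{4/r^2-4}$; a direct check gives $E=-2<-1$ and $K=0$, with $r$ arbitrarily small. What your argument \emph{does} establish is the lemma with the bound $E<-1$ replaced by $E<-2\sqrt2$ (or any constant below $-2\sqrt2$), because then $-E_n>2\sqrt2$ genuinely contradicts $|L_n|\le 2\sqrt2+o(1)$. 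Since every application of this lemma in the paper concerns energies at or slightly above $E^*=-4\sqrt2$, which lies well below $-2\sqrt2$, this corrected version suffices for all subsequent uses; so your instinct about where the crux lies is exactly right, but the sharp threshold is $2\sqrt2$, not $1$.
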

 \begin{proof}
Suppose not, then there exists a sequence $\Gamma_n\in\R^4$ such that $E(\Gamma_n)<-1$, $|K(\Gamma_n)|<\frac{1}{n}$ and $r_n< \frac{1}{n}$. Notice that 
 \begin{equation}
 E(\Gamma)=\frac{1}{2}(\dot{x}^2+\dot{y}^2)-2x^2-\frac{4}{r^2},
 \end{equation} and \begin{equation}
 K(\Gamma)=(\dot{x}-y)^2+(x+\dot{y})^2+3x^2-y^2-\frac{8}{r^2},
 \end{equation} thus $|K(\Gamma_n)|<\frac{1}{n}$ and $r_n< \frac{1}{n}$ imply that \[|\dot{x}_n^2+\dot{y}_n^2-\frac{8}{r_n^2}|\lesssim\frac{1}{n},\] thus $E(\Gamma_n)\to 0$, contradiction.   \end{proof}

\begin{remark}\label{rmk:away_sing}Similarly, we can show that for $\a\geq2$, there exists $c>0$, if $E(\Gamma)<-1$ and $K(\Gamma)\geq 0$, then $r\geq c$.\end{remark}


\subsection{Lagrange equation with one constraint}\label{sec:Lag1}
In this subsection, let's consider the Lagrange multiplier equation: $\nabla E=\lambda \nabla K$ with $K=0$. In particular, we have \begin{equation}\label{eq:Lag}\begin{cases}&V_x=\lambda(2\dot{y}-V_x-xV_{xx}-yV_{yx}), \qquad (a)\\&V_y=\lambda(-2\dot{x}-V_y-xV_{xy}-yV_{yy}),\qquad (b)\\&\dot{x}=\lambda(2\dot{x}-2y),\qquad(c)\\&\dot{y}=\lambda(2\dot{y}+2x). \qquad (d)\end{cases}\end{equation}

First, for $\lambda=0$, the equilibria of (\ref{eq:hlp}) are solutions to (\ref{eq:Lag}). We will denote $\Gamma_0:=\pm Q$ as either of the two equilibria since they are symmetric, notice that the corresponding $r_0^{\alpha+2}=\alpha$. Next, it's easy to see that $\lambda\neq\frac{1}{2}$, we then get \[\dot{x}=\frac{-2\lambda y}{1-2\lambda}, \quad \dot{y}=\frac{2\lambda x}{1-2\lambda}.\] Plug into (\ref{eq:Lag})(a)(b) and $K=0$ we will get 

\begin{equation}
\label{eq:Lag2}
\begin{split}
&x[\frac{(\a+2)-4(\a+1)\lambda^2}{1-2\lambda}+\frac{\a(\a+2)(\a\lambda-1)}{r^{\alpha+2}}]=0,\qquad (a)\\
&y[\frac{4\lambda^2}{1-2\lambda}+\frac{\a(\a+2)(\a\lambda-1)}{r^{\alpha+2}}]=0, \qquad (b)\\
&\frac{4\lambda(1-\lambda)(x^2+y^2)}{(1-2\lambda)^2}+(\a+2)(x^2-\frac{\alpha}{r^\alpha})=0.\qquad (c)
\end{split}
\end{equation}

\textbf{Case I:} When $xy\neq 0$, from (\ref{eq:Lag2})(a)(b) we get $\lambda=-\frac{1}{2}$. Thus we get four solutions for (\ref{eq:Lag}), we will denote any one of the four solutions as $\Gamma_1$ since they are symmetric with $r_1^{\alpha+2}=\a(\a+2)^2$, and \begin{equation} \quad x_1^2=\frac{\alpha}{r_1^\alpha}+\frac{3r_1^2}{4(\a+2)},\quad y_1^2=-\frac{\alpha}{r_1^\alpha}+\frac{(4\a+5)r_1^2}{4(\a+2)}, \quad \dot{x}_1=\frac{y_1}{2}, \quad \dot{y}_1=-\frac{x_1}{2}.\end{equation}

\begin{lemma}
The energy $E(\Gamma_0)<E(\Gamma_1)<0$.
\end{lemma}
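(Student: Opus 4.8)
The plan is a direct evaluation of $E$ at the two critical points followed by an elementary comparison.

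First I would compute both energies in closed form. At $\Gamma_0=\pm Q$ the velocity vanishes, so $E(\Gamma_0)=V(r_0,0)=-\tfrac{\a+2}{2}r_0^2-\tfrac{\a+2}{r_0^\a}$, and since $r_0^{\a+2}=\a$ this collapses to $E(\Gamma_0)=-\tfrac{(\a+2)^2}{2r_0^\a}$ (this is the value $E^*$ already recorded in the definition of the ground state). At $\Gamma_1$ I would use $\dot{x}_1=y_1/2$, $\dot{y}_1=-x_1/2$, so that $\dot{x}_1^2+\dot{y}_1^2=\tfrac14(x_1^2+y_1^2)=\tfrac14 r_1^2$; substituting $x_1^2=\tfrac{\a}{r_1^\a}+\tfrac{3r_1^2}{4(\a+2)}$ into $V(x_1,y_1)=-\tfrac{\a+2}{2}x_1^2-\tfrac{\a+2}{r_1^\a}$, the two $r_1^{-\a}$ contributions combine to $-\tfrac{(\a+2)^2}{2r_1^\a}$ and the $r_1^2$ contributions to $-\tfrac{r_1^2}{4}$, giving $E(\Gamma_1)=-\tfrac{r_1^2}{4}-\tfrac{(\a+2)^2}{2r_1^\a}$. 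Since both summands are negative, $E(\Gamma_1)<0$ is immediate.

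It then remains to show $E(\Gamma_0)<E(\Gamma_1)$, i.e.\ $\tfrac{r_1^2}{4}<\tfrac{(\a+2)^2}{2}\bigl(\tfrac{1}{r_0^\a}-\tfrac{1}{r_1^\a}\bigr)$. Writing $\mu:=r_1/r_0=(\a+2)^{2/(\a+2)}$ (consistent with $r_0^{\a+2}=\a$ and $r_1^{\a+2}=\a(\a+2)^2$) and using the identity $(\a+2)^2=\mu^{\a+2}$, the right-hand side becomes $\tfrac{\mu^{\a+2}}{2}\cdot\tfrac{\mu^\a-1}{\mu^\a r_0^\a}=\tfrac{\mu^2(\mu^\a-1)}{2r_0^\a}$, while the left-hand side is $\tfrac{\mu^2 r_0^2}{4}$; cancelling $\mu^2$ and using $r_0^{\a+2}=\a$ once more, the inequality reduces to the scalar statement $\mu^\a>1+\tfrac{\a}{2}=\tfrac{\a+2}{2}$, equivalently $(\a+2)^{(\a-2)/(\a+2)}>\tfrac12$. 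For the range $\a\ge2$ of interest this is trivial, because the exponent is nonnegative and the base is at least $4$, so the left-hand side is $\ge1$; if one wants the lemma for $0<\a<2$ as well, it follows by checking that $\phi(\b):=(\tfrac4\b-1)\ln\b$ is strictly decreasing on $[2,4)$ with $\phi(2)=\ln2$, whence $\phi(\a+2)<\ln2$.

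The only mildly delicate point is the algebraic bookkeeping that turns $E(\Gamma_1)$ into the clean form $-\tfrac{r_1^2}{4}-\tfrac{(\a+2)^2}{2r_1^\a}$ and the reduction to the one-line inequality $\mu^\a>\tfrac{\a+2}{2}$; once the relation $(\a+2)^2=\mu^{\a+2}$ is exploited, the remaining steps are routine and the final inequality has essentially no content in the regime $\a\ge2$.
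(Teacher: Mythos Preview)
Your argument is correct and matches the paper's proof in all essentials: both compute the two energies explicitly and reduce the comparison to the scalar inequality $\tfrac12(\a+2)^{(2-\a)/(\a+2)}<1$ (your form $(\a+2)^{(\a-2)/(\a+2)}>\tfrac12$ is its reciprocal), then verify it by a monotonicity-plus-boundary-value check. The paper simplifies $E(\Gamma_1)$ one step further to $-\tfrac{(\a+2)^3}{4r_1^\a}$ (via $r_1^{\a+2}=\a(\a+2)^2$) and then forms the ratio $f(\a)=E(\Gamma_1)/E(\Gamma_0)$ directly, which shortens the algebra a bit, but the substance is identical.
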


\begin{proof}
We have \begin{equation}E(\Gamma_0)=-\frac{\a+2}{2}r_0^2-\frac{\a+2}{r_0^\alpha}=-\frac{(\alpha+2)^2}{2r_0^\alpha},\end{equation} and \begin{equation}E(\Gamma_1)=\frac{1}{8}r_1^2-\frac{\a+2}{2}(\frac{\alpha}{r_1^\alpha}+\frac{3r_1^2}{4(\a+2)})-\frac{\a+2}{r_1^\alpha}=-\frac{(\alpha+2)^3}{4r_1^\alpha}.\end{equation}Notice that we have used $r_0^{\alpha+2}=\alpha$ and $r_1^{\alpha+2}=\a(\a+2)^2$. Both energies are negative. Let \[f(\alpha):=\frac{E(\Gamma_1)}{E(\Gamma_0)}=\frac{1}{2}(\a+2)^{\frac{2-\a}{\a+2}},\] we have $f'(\alpha)=\frac{f(\alpha)}{(\alpha+2)^2}[2-\a-4\ln(\alpha+2)]<0$ for all $\alpha>0$. Since $f(0)=1$, we get $f(\alpha)<1$, hence $E(\Gamma_0)<E(\Gamma_1)$.
\end{proof}

\textbf{Case II:} When $x=0$, then $r^2=y^2$, from (\ref{eq:Lag2})(b)(c) we will get \begin{equation}\begin{cases}&\frac{4\lambda^2}{1-2\lambda}=\frac{\a(\a+2)(1-\a\lambda)}{r^{\alpha+2}},\\&\frac{4\lambda(1-\lambda)}{(1-2\lambda)^2}=\frac{\alpha(\a+2)}{r^{\alpha+2}}.\end{cases}\end{equation} Thus \[\lambda(1-2\lambda)=(1-\alpha\lambda)(1-\lambda),\]  \[\Rightarrow (2+\alpha)\lambda^2-(2+\alpha)\lambda+1=0,\] whose solutions are $\lambda_{\pm}=\frac{1}{2}(1\pm\sqrt{\frac{\alpha-2}{\alpha+2}})$ when $\alpha>2$. Thus we get four solutions to (\ref{eq:Lag}), two for each of the $\lambda_{\pm}$. We will denote any one of the four solutions as $\Gamma_2$ (and $\Gamma_{2\pm}$ for $\lambda_{\pm}$). For both of the $\lambda_{\pm}$, we have $r_2^{\alpha+2}=\frac{\alpha(\a+2)(\alpha-2)}{4}$. Using the relation $\dot{x}=\frac{-2\lambda y}{1-2\lambda}$, we can show that

\begin{lemma}
For $\alpha>2$, the energy $E(\Gamma_{2-})<0<E(\Gamma_{2+})$ and $E(\Gamma_1)<E(\Gamma_{2-})$.
\end{lemma}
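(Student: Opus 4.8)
The plan is to put $E(\Gamma_{2\pm})$ in closed form (which makes the sign claim immediate) and then to reduce $E(\Gamma_1)<E(\Gamma_{2-})$ to an elementary one‑variable inequality.

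\emph{Closed form and signs.} On a $\Gamma_2$‑solution $x=0$, $r=|y|$, $\dot y=0$ and $\dot x=\tfrac{-2\lambda}{1-2\lambda}y$; since $\lambda=\lambda_\pm$ solves $(\alpha+2)\lambda^2-(\alpha+2)\lambda+1=0$ one has $\lambda_\pm(1-\lambda_\pm)=\tfrac1{\alpha+2}$, $(1-2\lambda_\pm)^2=\tfrac{\alpha-2}{\alpha+2}$ and $\lambda_\pm^2=\tfrac12\big(\tfrac\alpha{\alpha+2}\pm\rho\big)$ with $\rho:=\sqrt{\tfrac{\alpha-2}{\alpha+2}}$, so that $\tfrac12\dot x^2=\tfrac{\alpha\pm\sqrt{\alpha^2-4}}{\alpha-2}\,r_2^2$. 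Combining with $\tfrac{\alpha+2}{r_2^\alpha}=\tfrac{4r_2^2}{\alpha(\alpha-2)}$, which follows from $r_2^{\alpha+2}=\tfrac{\alpha(\alpha+2)(\alpha-2)}4$, gives
\[
E(\Gamma_{2\pm})=\frac{r_2^2\sqrt{\alpha^2-4}}{\alpha(\alpha-2)}\big(\sqrt{\alpha^2-4}\pm\alpha\big).
\]
For $\alpha>2$ we have $0<\sqrt{\alpha^2-4}<\alpha$, so $\sqrt{\alpha^2-4}-\alpha<0<\sqrt{\alpha^2-4}+\alpha$ and hence $E(\Gamma_{2-})<0<E(\Gamma_{2+})$.

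\emph{The inequality $E(\Gamma_1)<E(\Gamma_{2-})$.} Both sides are negative ($E(\Gamma_1)=-\tfrac{(\alpha+2)^3}{4r_1^\alpha}<0$, and $E(\Gamma_{2-})<0$ by the previous step), so the claim is equivalent to $E(\Gamma_{2-})/E(\Gamma_1)<1$. Writing $E(\Gamma_{2-})=\tfrac{\alpha+2}4\sqrt{\alpha^2-4}\,(\sqrt{\alpha^2-4}-\alpha)\,r_2^{-\alpha}$ and using $(r_1/r_2)^{\alpha+2}=\tfrac{4(\alpha+2)}{\alpha-2}$ (from $r_1^{\alpha+2}=\alpha(\alpha+2)^2$),
\[
\frac{E(\Gamma_{2-})}{E(\Gamma_1)}=\frac{\sqrt{\alpha^2-4}\,(\alpha-\sqrt{\alpha^2-4})}{(\alpha+2)^2}\Big(\tfrac{4(\alpha+2)}{\alpha-2}\Big)^{\alpha/(\alpha+2)}.
\]
Now substitute $u:=\sqrt{\tfrac{\alpha-2}{\alpha+2}}\in(0,1)$ (equivalently $\alpha=\tfrac{2(1+u^2)}{1-u^2}$); a direct computation collapses the right‑hand side to $\tfrac{(1-u)^2}{(u/2)^{u^2}}$, so it remains to prove $g(u):=\tfrac{(u/2)^{u^2}}{(1-u)^2}>1$ for all $u\in(0,1)$. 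Taking logarithms and using $-2\ln(1-u)=2\sum_{k\ge1}u^k/k>2u+u^2$ yields $\ln g(u)>u^2\ln(eu/2)+2u$. If $u\ge 2/e$ the first term is nonnegative and we are done; if $u<2/e$, write $u^2\ln(eu/2)=-u\,h(u)$ with $h(u):=u\ln\tfrac2{eu}$, note $h\ge 0$ on $(0,2/e)$ with maximum $h(2/e^2)=2/e^2<2$, and conclude $\ln g(u)>u\,(2-h(u))>0$. Hence $g(u)>1$ and $E(\Gamma_1)<E(\Gamma_{2-})$.

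\emph{Main obstacle.} The delicate regime is $\alpha\to2^+$, i.e.\ $u\to0^+$, where $E(\Gamma_{2-})/E(\Gamma_1)\to1$; crude bounds on the ratio are useless there, and the argument must weigh the $O(u)$ gain $2u$ against the negative $O(u^2\ln u)$ term — exactly what the case split accomplishes. The remainder is bookkeeping with the already‑computed data $r_1,r_2$ and $\lambda_\pm$.
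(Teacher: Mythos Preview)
Your proof is correct. The first half (closed form for $E(\Gamma_{2\pm})$ and the sign conclusion) is essentially the same computation as in the paper, just packaged in the neater form $\frac{r_2^2\sqrt{\alpha^2-4}}{\alpha(\alpha-2)}(\sqrt{\alpha^2-4}\pm\alpha)$ rather than the paper's $\frac{(\alpha+2)^2}{8r_2^\alpha}\big[\alpha(1\pm\rho)^2-\tfrac{8}{\alpha+2}\big]$; these are algebraically equivalent.

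The real difference is in the second half. The paper forms the same ratio $g(\alpha)=E(\Gamma_{2-})/E(\Gamma_1)$ but then simply asserts, citing \emph{Mathematica}, that $g$ is decreasing on $(2,\infty)$ with $g(2^+)=1$ and $g(\infty)=0$. Your substitution $u=\sqrt{(\alpha-2)/(\alpha+2)}$ collapses the ratio to $(1-u)^2/(u/2)^{u^2}$ and turns the question into the elementary one-variable inequality $(u/2)^{u^2}>(1-u)^2$ on $(0,1)$, which you then dispatch with the Taylor bound $-2\ln(1-u)>2u+u^2$ and a short case split around $u=2/e$ (the maximum of $h(u)=u\ln\tfrac{2}{eu}$ at $u=2/e^2$ being $2/e^2<2$ is the only calculus fact needed). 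This is a genuine improvement: it replaces a computer-algebra check by a self-contained analytic argument, and it isolates exactly why the borderline $\alpha\to2^+$ (i.e.\ $u\to0^+$) works --- the $2u$ term dominates the $O(u^2\ln u)$ loss. The paper's monotonicity route, by contrast, gives no such insight and is not actually proved in the text.
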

\begin{proof}
Notice that $\Gamma_2$ has the form $(0, y, \dot{x}, 0)$, we have \begin{equation}E(\Gamma_2)=\frac{1}{2}\cdot\frac{4\lambda^2r_2^2}{(1-2\lambda)^2}-\frac{\a+2}{r_2^{\alpha}}.\end{equation} Plug in $\lambda=\frac{1}{2}(1\pm\sqrt{\frac{\alpha-2}{\alpha+2}})$ and $r_2^{\alpha+2}=\frac{\alpha(\a+2)(\alpha-2)}{4}$, we get \begin{equation}E(\Gamma_{2\pm})=\frac{(\alpha+2)^2}{8r_2^\alpha}[\alpha(1\pm\sqrt{\frac{\alpha-2}{\alpha+2}})^2-\frac{8}{\alpha+2}],\end{equation} thus we have $E(\Gamma_{2-})<0<E(\Gamma_{2+})$. To compare $E(\Gamma_{2-})$ and $E(\Gamma_1)$, let 

\begin{equation}\begin{split}g(\alpha)&:=\frac{E(\Gamma_{2-})}{E(\Gamma_1)}=\frac{r_1^\alpha}{2r_2^\alpha(\alpha+2)}[\frac{8}{\alpha+2}-\alpha(1-\sqrt{\frac{\alpha-2}{\alpha+2}})^2]\\&=\frac{1}{2(\alpha+2)}\cdot(\frac{4(\alpha+2)}{\alpha-2})^{\frac{\alpha}{\alpha+2}}[\frac{8}{\alpha+2}-\alpha(1-\sqrt{\frac{\alpha-2}{\alpha+2}})^2].\end{split}\end{equation} Through \emph{Mathematica}, we know $g(\alpha)$ is deceasing for $\alpha>2$, $\lim\limits_{\alpha\to2^+}g(\alpha)=1$ and $\lim\limits_{\alpha\to\infty}g(\alpha)=0$. Thus $0<g(\alpha)<1$ for all $\alpha>2$ and $E(\Gamma_1)<E(\Gamma_{2-})<0$.\end{proof}

\textbf{Case III:} When $y=0$, then $r^2=x^2$, from (\ref{eq:Lag2})(a)(c) we will get \begin{equation}\begin{cases}&\frac{(\a+2)-4(\a+1)\lambda^2}{1-2\lambda}=\frac{\a(\a+2)(1-\a\lambda)}{r^{\alpha+2}},\\&\frac{4(\a+1)(\lambda^2-\lambda)+\a+2}{(1-2\lambda)^2}=\frac{\alpha(\a+2)}{r^{\alpha+2}}.\end{cases}\end{equation} Thus \[[(\a+2)-4(\a+1)\lambda^2](1-2\lambda)=(1-\a\lambda)[4(\a+1)(\lambda^2-\lambda)+\a+2],\]  \[\Rightarrow \lambda[4(\a+1)(\a+2)(\lambda^2-\lambda)+\a(\a+4)]=0,\] the discriminant of the quadratic equation of $\lambda$ is $16(\a+1)(\a+2)(2-\a)$. Therefore the only solution is $\lambda=0$ when $\alpha>2$ and they coincide with the equilibria $\Gamma_0$.

\textbf{Summary:} the solutions of (\ref{eq:Lag}) with $K=0$ are as follows: 

\begin{enumerate}
\item[1.] $\lambda_0=0$, this corresponds to the equilibria of the HLP, and the configuration points are on the $x$-axis. The radius $r_0^{\alpha+2}=\alpha$. 
\item[2.] $\lambda_1=-\frac{1}{2}$, there are four configuration points, they are not on the axes. The radius $r_1^{\alpha+2}=\a(\a+2)^2$.
\item[3.] $\lambda_{2\pm}=\frac{1}{2}(1\pm\sqrt{\frac{\alpha-2}{\alpha+2}})$, there are two configuration points, they are on the $y$-axis. The radius $r_2^{\alpha+2}=\frac{\alpha(\a+2)(\alpha-2)}{4}$. 
\item[4.] The values of $E$ for these solutions satisfy \[E(\Gamma_0)<E(\Gamma_1)<E(\Gamma_{2-})<0<E(\Gamma_{2+}).\] 
\end{enumerate}


\subsection{Lagrange equation with two constraints}\label{sec:Lag2}

In this subsection, let's consider the Lagrange multiplier equation: $\nabla E=\lambda \nabla K+\mu\nabla W$ with $K=0, W=0$. In particular, we have

 \begin{equation}\label{eq:Lagtwo}\begin{cases}&V_x=\lambda(2\dot{y}-V_x-xV_{xx}-yV_{yx})+\mu(V_x+xV_{xx}+yV_{yx}) \qquad (a)\\&V_y=\lambda(-2\dot{x}-V_y-xV_{xy}-yV_{yy})+\mu(V_y+xV_{xy}+yV_{yy})\qquad (b)\\&\dot{x}=\lambda(2\dot{x}-2y)\qquad(c)\\&\dot{y}=\lambda(2\dot{y}+2x). \qquad (d)\end{cases}\end{equation} First notice that $\lambda\neq\frac{1}{2}$, then we have \[\dot{x}=\frac{-2\lambda y}{1-2\lambda}, \quad \dot{y}=\frac{2\lambda x}{1-2\lambda}.\] Plug into $K=0$, \[\frac{4\lambda(1-\lambda)(x^2+y^2)}{(1-2\lambda)^2}+(\a+2)(x^2-\frac{\alpha}{r^\alpha})=0, \] together with $W=0$ we get $\lambda=0, 1$. Simplify (\ref{eq:Lagtwo}) (a)(b) we get 

\begin{equation}
\label{eq:Lagtwo2}
\begin{cases}
&x[2(\a+2)(\lambda-\mu)+(\a+2)+\frac{4\lambda^2}{1-2\lambda}+\frac{\a(\a+2)(\alpha(\lambda-\mu)-1)}{r^{\alpha+2}}]=0\qquad (a)\\
&y[\frac{4\lambda^2}{1-2\lambda}+\frac{\a(\a+2)(\alpha(\lambda-\mu)-1)}{r^{\alpha+2}}]=0. \qquad (b)
\end{cases}
\end{equation}
If $xy\neq0$, from (\ref{eq:Lagtwo2}) we will get $\mu-\lambda=\frac{1}{2}$. But when $\lambda=0, 1$, (\ref{eq:Lagtwo2})(b) will never be satisfied unless $y=0$. Therefore we must have $y=0$ (since $x\neq0$ from the constraint $W=0$). Thus the only solution for the two-constraint Lagrange equation is $\Gamma_0$, i.e. $\pm Q$.


\subsection{Proof of Proposition \ref{prop:KWvar}}
First, $K(\pm Q)=0$, thus $\inf\{E | K=0, W=0\}=E^*$ is obvious from Lemma \ref{lem:ground}.  Let's study $\inf\{E | K\geq0, W\leq 0\}$, first notice that the infimum of $E$ must be achieved, see Lemma \ref{lem:away_sing} and Lemma \ref{lem:away_sing2} and the remark after them. Let's study the Lagrange multiplier equations with inequality constraints. Introduce new variables $p, q$, then $K\geq0$ and $W\leq 0$ are equivalent to $\tilde{K}=K-p^2=0$ and $\tilde{W}=W+q^2=0$. Then the Lagrange equation $\nabla E=\lambda \nabla \tilde{K}+\mu \nabla \tilde{W}$ coincide with (\ref{eq:Lagtwo}) with two extra equations: $0=2\lambda p, 0=2\mu q$.

\textbf{Case I:} when $pq\neq0$, then $\lambda=\mu=0$ the the only possible solution for the Lagrange equation is $\Gamma_0$, but $W=-q^2\neq0$, thus no solutions.

\textbf{Case II:} when $p=0, q=0$, then $K=0, W=0$, the solution is $\Gamma_0$ as been studied in section \ref{sec:Lag2}.

\textbf{Case III:} when $p=0, q\neq0$, then $\mu=0$ and $K=0$, the equations coincide with (\ref{eq:Lag}), and we have shown that the infimum can only be attained by $\Gamma_0$ in section \ref{sec:Lag1}.

\textbf{Case IV:} when $p\neq0, q=0$, then $\lambda=0$. We get $\dot{x}=\dot{y}=0$, and $\nabla V=\mu\nabla W$, together with $W=0$, it is easy to show that the only possible solution is $\Gamma_0$, but $K=p^2\neq0$, thus no solutions.
Therefore \[\inf\{E | K\geq0, W\leq 0\}=E(\Gamma_0)=E^*.\]



\section{below the ground state energy}
\label{sec:below}

Consider four sets in the phase space with energy below $E^*$:

\begin{equation}\label{eq:fourset}
\begin{split}
\cW_+^+&=\{\cW | K(\Gamma)\geq 0, W(\Gamma) > 0\},\\
\cW_+^-&=\{\cW | K(\Gamma)< 0, W(\Gamma) > 0\},\\
\cW_-^+&=\{\cW | K(\Gamma)\geq 0, W(\Gamma) \leq 0\},\\
\cW_-^-&=\{\cW | K(\Gamma)< 0, W(\Gamma) \leq 0\},\\\end{split}
\end{equation}
where $\cW=\{\Gamma=(x, y, \dot{x}, \dot{y})\in \R^4 | E(\Gamma)<E^*\}$. Notice that the upper right sign of $\cW$ corresponds to $K$, and the lower right sign corresponds to $W$.  We have $\cW_{+}=\cW_{+}^+\cup\cW_{+}^-$ and  $\cW_{-}=\cW_{-}^+\cup\cW_{-}^-$.

\begin{figure}[h]
\begin{subfigure}[b]{0.45\textwidth}
\centering
  \includegraphics[width=\textwidth]{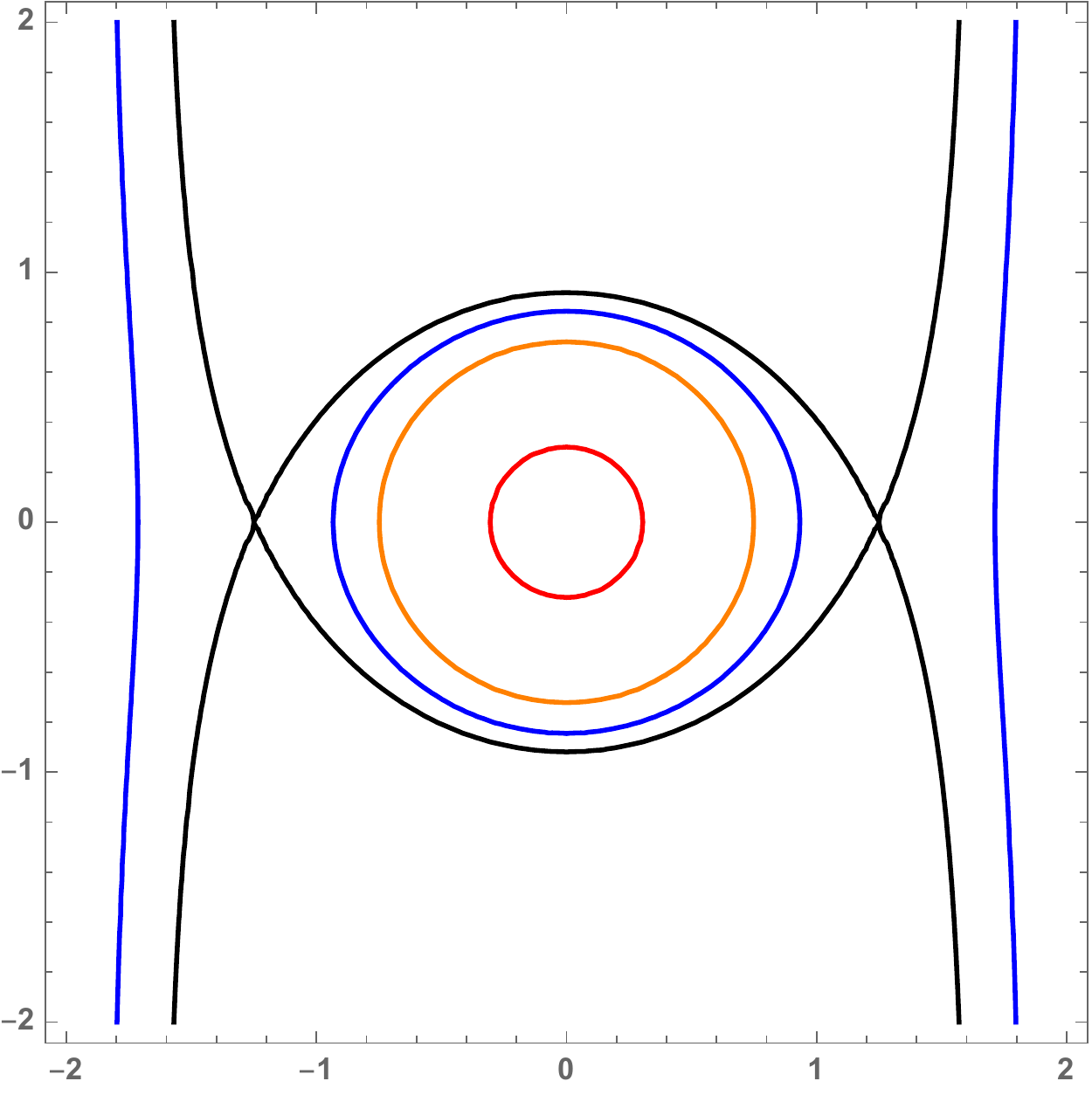}
  \caption{Level curves of $V(x, y)\leq E^*$}
  \end{subfigure}
\qquad\begin{subfigure}[b]{0.45\textwidth}
\centering
  \includegraphics[width=\textwidth]{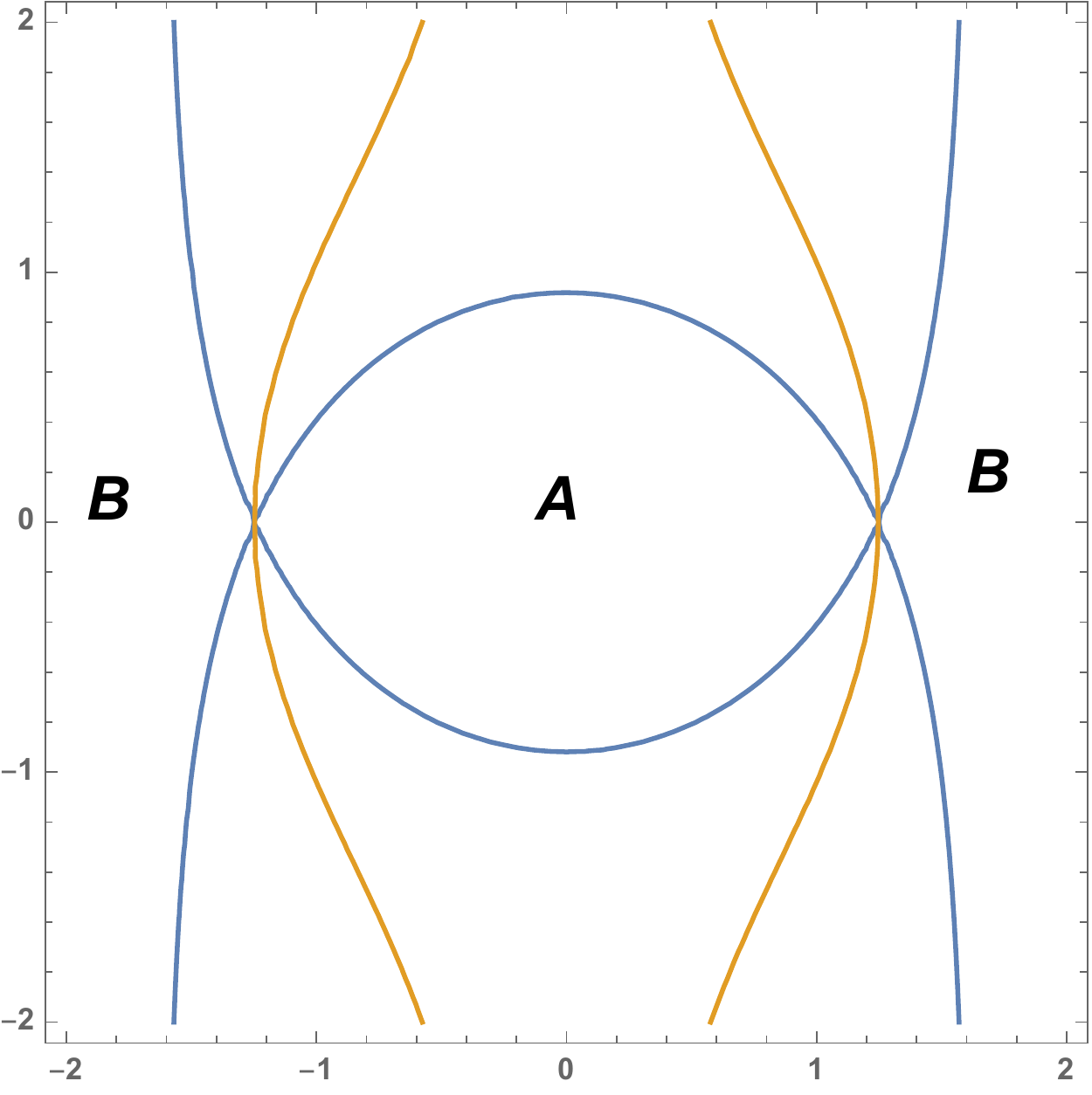}
  \caption{$V=E^*$ (blue) and $W=0$ (orange)}
  \end{subfigure}

\caption{We have $\alpha=3$ for both graphs. (A) indicates the level curves of $V(x,y)$ below $E^*$, and $V$ decreases to $-\infty$ when approaching the origin.  (B) gives the level curves of $V(x, y)=E^*$ (blue) and $W(x,y)=0$ (orange): the bounded region A in (B) is $\{V<E^*, W\leq 0\}=\mathrm{Proj} (\cW_-)$ and the two unbounded regions B in (B) is $\{V<E^*, W>0\}=\mathrm{Proj} (\cW_+)$, where $\mathrm{Proj} (\cW_{\pm})$ denotes the projection of $\cW_{\pm}$ into the configuration space $(x,y)$.}

 \label{fig:hlpVW}
\end{figure}

\begin{lemma}\label{lem:invK12}
The sets $\cW_+$ and $\cW_-$ are invariant for HLP.
\end{lemma}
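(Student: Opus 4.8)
The plan is to exploit two ingredients: that the energy $E$ is constant along every solution of (\ref{eq:hlp}) (the Jacobi integral), and that the surface $\{W=0\}$ cannot be visited by an orbit whose energy is strictly below $E^*$. Since $E$ is conserved, the defining condition $E(\Gamma)<E^*$ of $\cW:=\cW_+\cup\cW_-$ is preserved by the flow, so the whole issue is to rule out a sign change of $W$ along an orbit. I will show that $W$ does not vanish anywhere on $\cW$; granting this, $\{W>0\}\cap\cW$ and $\{W<0\}\cap\cW$ are disjoint relatively open sets covering $\cW$, and moreover $\cW_-=\{W\le 0\}\cap\cW$ coincides with $\{W<0\}\cap\cW$, so that $\cW_+$ and $\cW_-$ are disjoint open sets whose union is $\cW$.

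The nonvanishing of $W$ on $\cW$ is essentially Lemma \ref{lem:ground}: that lemma gives $E^*=\inf\{E(x,y,\dot x,\dot y)\mid W=0\}$, so every phase point with $W=0$ satisfies $E\ge E^*$, and hence any point with $E<E^*$ must have $W\ne 0$. If one prefers an elementary argument not invoking Lemma \ref{lem:ground}: on $\{W=0\}$ one has $x\ne 0$ and $x^2=\alpha r^{-\alpha}$, which gives $V=-\tfrac{(\alpha+2)^2}{2\alpha}x^2$; the identity $x^2 r^\alpha=\alpha$ together with $r^2\ge x^2$ forces $x^2\le \alpha^{2/(\alpha+2)}$, with equality only at the Lagrange points, whence $V\ge E^*$ on $\{W=0\}$ and therefore $E\ge V\ge E^*$ there. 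Either way, $W$ is continuous and nowhere zero on $\cW$.

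To conclude, take any solution $\Gamma(t)$, $t\in[0,T_{\mathrm{max}})$, with $\Gamma(0)\in\cW$. Conservation of $E$ gives $\Gamma(t)\in\cW$ for all such $t$, so $t\mapsto W(\Gamma(t))$ is continuous and never zero, hence of constant sign. If $\Gamma(0)\in\cW_+$ then $W(\Gamma(0))>0$, so $W(\Gamma(t))>0$ for all $t$ and $\Gamma(t)\in\cW_+$; if $\Gamma(0)\in\cW_-$ then $W(\Gamma(0))<0$, so $W(\Gamma(t))<0\le 0$ for all $t$ and $\Gamma(t)\in\cW_-$. This proves that both sets are invariant.

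I do not anticipate a genuine obstacle here: the one substantive point is that the separating set $\{W=0\}$ sits at energy at least $E^*$, which is exactly the ground-state variational characterization of Lemma \ref{lem:ground}; the rest is continuity and conservation of $E$. It is worth noting that this argument uses only these two facts, so the invariance of $\cW_\pm$ actually holds for every $\alpha>0$; the hypothesis $\alpha\ge 2$ will be needed only for the sharper part of Theorem \ref{thm:below} describing which orbits of $\cW_-$ collide.
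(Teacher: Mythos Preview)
Your proof is correct and follows essentially the same idea as the paper: both show that the constraint surface $\{W=0\}$ lies entirely at energy $\ge E^*$, so on $\cW=\{E<E^*\}$ the sign of $W$ is fixed along each orbit by continuity and conservation of $E$. The only difference is presentational: the paper argues this by pointing to the comparison of the level curves $\{V=E^*\}$ and $\{W=0\}$ in Figure~\ref{fig:hlpVW}, whereas you supply the explicit inequality (either via Lemma~\ref{lem:ground} or via your direct computation $V\big|_{W=0}=-\tfrac{(\alpha+2)^2}{2\alpha}x^2\ge E^*$) that makes that picture rigorous.
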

\begin{proof}
It is enough to show $\cW_+=\{\Gamma\in\R^4 | E(\Gamma)<E^*, W(\Gamma)>0\}$ is invariant. Notice that the energy is conserved, so $E<E^*$ implies $V(x,y)<E^*$ in the configuration space $(x,y)$. Therefore the invariance of $\cW_+$ can be seen from the comparison of the level curves of $V(x,y)=-(\a+2)(\frac{x^2}{2}+\frac{1}{r^{\alpha}})<E^*$ and $W(x,y)=(\a+2)(x^2-\frac{\alpha}{r^{\alpha}})=0$ as illustrated in Figure \ref{fig:hlpVW}. In particular, the bounded region A in Figure \ref{fig:hlpVW}(B) corresponds to the projection of $\cW_-$ in the $(x,y)$ configuration space, and the unbounded regions B in Figure \ref{fig:hlpVW}(B) corresponds to the projection of $\cW_+$. 
\end{proof}

From Lemma \ref{lem:invK12}, we know immediately that solutions in $\cW_+$ exist globally, since $(x,y)$ stays away from the origin. Now we focus on the set $\cW_-$.

\begin{lemma}\label{lem:K1plus}
For $\alpha\geq 2$, the set $\cW_-^+$ is empty.
\end{lemma}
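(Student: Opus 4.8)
The plan is to read this off from Proposition \ref{prop:KWvar} directly. Suppose for contradiction that $\cW_-^+ \neq \emptyset$ and choose $\Gamma = (x, y, \dot{x}, \dot{y}) \in \cW_-^+$. By definition $K(\Gamma) \geq 0$ and $W(\Gamma) \leq 0$, so $\Gamma$ lies in the admissible set of the variational problem in Proposition \ref{prop:KWvar}; hence $E(\Gamma) \geq \inf\{E \mid K \geq 0,\ W \leq 0\} = E^*$. But $\Gamma \in \cW_-^+ \subset \cW$ means $E(\Gamma) < E^*$, a contradiction. Therefore $\cW_-^+ = \emptyset$. The only point to be careful about is that $\cW$ is defined with the \emph{strict} inequality $E < E^*$, which is precisely what makes the two bounds $E(\Gamma) \geq E^*$ and $E(\Gamma) < E^*$ incompatible; with a non-strict inequality one would only conclude that $\cW_-^+$ meets the level set $\{E = E^*\}$, namely at $\pm Q$.

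It is worth noting where the hypothesis $\alpha \geq 2$ hides, since it is invisible in the one-line argument above: it is used inside the proof of Proposition \ref{prop:KWvar}. There one first invokes Lemma \ref{lem:away_sing}, Lemma \ref{lem:away_sing2} and Remark \ref{rmk:away_sing} to guarantee that on $\{E \leq E^*,\ K \geq 0,\ W \leq 0\}$ the configuration stays uniformly away from the collision, $r \geq c > 0$; together with $V \leq E^*$ and $W \leq 0$ (a bounded Hill region) this makes the admissible set compact, so the constrained infimum is attained and the Lagrange-multiplier classification of Subsections \ref{sec:Lag1}--\ref{sec:Lag2} legitimately applies. The chain $E(\Gamma_0) < E(\Gamma_1) < E(\Gamma_{2-}) < 0$ then forces the minimizer to be $\pm Q$, with value $E^*$. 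Both the away-from-singularity estimate and these sign comparisons break down for $\alpha < 2$, which is exactly why the clean dichotomy recorded here (and in Theorem \ref{thm:below}) is a strong-potential phenomenon; I do not expect any genuine obstacle in proving the lemma itself, as all the real work has already been done upstream.

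Should one wish to bypass Proposition \ref{prop:KWvar}, the natural self-contained route is to set $u = \dot{x} - y$, $v = \dot{y} + x$, so that $K = u^2 + v^2 - (x^2 + y^2) + W$ and $2E = u^2 + v^2 + 2(uy - vx) + (x^2 + y^2) + 2V$, and then to derive a contradiction from $K \geq 0$, $W \leq 0$ (equivalently $x^2 \leq \alpha r^{-\alpha}$) and $E < E^*$ by elementary inequalities, controlling the cross term $2(uy - vx)$ with Cauchy--Schwarz. I believe this can be pushed through, but it amounts to re-proving a slice of Proposition \ref{prop:KWvar} by hand, so it is not the approach I would adopt.
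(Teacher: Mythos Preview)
Your argument is correct and is exactly the paper's approach: the paper's entire proof reads ``Use Proposition \ref{prop:KWvar},'' and you have simply spelled out that one-line implication. Your additional commentary on where the hypothesis $\alpha\geq 2$ enters (via Lemmas \ref{lem:away_sing}--\ref{lem:away_sing2} and Remark \ref{rmk:away_sing} inside the proof of Proposition \ref{prop:KWvar}) is accurate and helpful context, though not required for the lemma itself.
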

\begin{proof}
Use Proposition \ref{prop:KWvar}.
\end{proof}

Now we are ready to show that solutions with initial conditions in $\cW_-^-$ must have collisions.

\begin{lemma}\label{lem:kappadelta}
For $\alpha\geq 2$, given any $\delta>0$, there exists $\kappa(\delta)>0$ such that if $E(\Gamma)\leq E^*-\delta$, $W(\Gamma)\leq0$ then $|K(\Gamma)|\geq \kappa(\delta)$.
\end{lemma}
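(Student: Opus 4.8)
The plan is to argue by contradiction using a compactness/normalization argument, in the same spirit as Lemmas \ref{lem:away_sing} and \ref{lem:away_sing2}, but now leveraging Proposition \ref{prop:KWvar} rather than proving everything by hand. Suppose the statement fails: then for some fixed $\delta>0$ there is a sequence $\Gamma_n=(x_n,y_n,\dot x_n,\dot y_n)\in\R^4$ with $E(\Gamma_n)\le E^*-\delta$, $W(\Gamma_n)\le 0$, and $|K(\Gamma_n)|\to 0$. I would like to pass to a limit $\Gamma_\infty$ satisfying $E(\Gamma_\infty)\le E^*-\delta$, $W(\Gamma_\infty)\le 0$, and $K(\Gamma_\infty)=0$; but $\inf\{E\mid K\ge 0, W\le 0\}=E^*$ by Proposition \ref{prop:KWvar}, and $K(\Gamma_\infty)=0\ge 0$ together with $W(\Gamma_\infty)\le 0$ forces $E(\Gamma_\infty)\ge E^*$, contradicting $E(\Gamma_\infty)\le E^*-\delta<E^*$.

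The work is therefore entirely in justifying the limit, i.e. showing the sequence $\Gamma_n$ is precompact in $\R^4$ (away from the singularity). First, since $W(\Gamma_n)\le 0$ we have $x_n^2\le \alpha/r_n^\alpha$, so $x_n$ is controlled by $r_n$; second, I would invoke Remark \ref{rmk:away_sing} (or Lemma \ref{lem:away_sing}/\ref{lem:away_sing2} for $\alpha>2$ and $\alpha=2$ respectively), which applies since $E(\Gamma_n)\le E^*-\delta<-1$ and $|K(\Gamma_n)|$ is eventually smaller than the constant $c$: this gives a uniform lower bound $r_n\ge c>0$, so the configuration stays in a compact annular region and $V(x_n,y_n)$ stays bounded. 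Next I need an upper bound on $r_n$: from $E(\Gamma_n)\le E^*-\delta$ and $V(x,y)=-\frac{\alpha+2}{2}x^2-\frac{\alpha+2}{r^\alpha}\to 0^-$ as $r\to\infty$ with $x$ bounded, one sees that $r_n$ cannot go to infinity while keeping $E$ bounded above by a negative constant, unless the velocities blow up — but $\frac12(\dot x_n^2+\dot y_n^2)=E(\Gamma_n)-V(x_n,y_n)\le E^*-\delta-V(x_n,y_n)$ forces $V(x_n,y_n)\le E^*-\delta$, i.e. $(x_n,y_n)$ lies in the compact set $\{V\le E^*-\delta\}$ (recall $\{V\le c\}$ is the Hill region, which is bounded for $c<0$ on the relevant component, and on the unbounded component $W>0$ which is excluded). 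Hence $r_n$ is bounded above and below, and the velocity components are bounded by the energy identity. So $\Gamma_n$ lies in a compact subset of $\R^4\setminus\{r=0\}$; pass to a subsequence converging to $\Gamma_\infty$, and by continuity of $E,W,K$ on this set we get $E(\Gamma_\infty)\le E^*-\delta$, $W(\Gamma_\infty)\le 0$, $K(\Gamma_\infty)=0$, which is the desired contradiction.

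The main obstacle I anticipate is the boundedness of $\{(x_n,y_n)\}$ in the configuration space, i.e. ruling out $r_n\to\infty$. This is where the constraint $W(\Gamma_n)\le 0$ is essential: it confines $(x_n,y_n)$ to the bounded region A of Figure \ref{fig:hlpVW}(B) (the component of $\{W\le 0\}$), so $r_n$ stays bounded without any argument about the energy; combined with the lower bound $r_n\ge c$ from Remark \ref{rmk:away_sing}, compactness is immediate. One should be slightly careful that the region $\{W\le 0\}$ is genuinely bounded — this follows since $W\le 0 \iff r^{\alpha+2}x^{-2}\le\alpha$ cannot hold for $|x|$ or $r$ large — and that $E^*-\delta<-1$ so that the cited lemmas apply (true since $E^*=-\tfrac12(\alpha+2)^2\alpha^{-\alpha/(\alpha+2)}$ is comfortably below $-1$ for $\alpha\ge 2$, with $E^*=-4\sqrt2$ at $\alpha=2$). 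With these two confinements in hand the compactness argument is routine and the contradiction closes.
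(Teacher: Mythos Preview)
Your approach is essentially the paper's: argue by contradiction, use Lemmas~\ref{lem:away_sing}/\ref{lem:away_sing2} to bound $r_n$ away from zero, extract a convergent subsequence, and contradict Proposition~\ref{prop:KWvar}. The structure is correct and the proof closes.

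There is one slip worth flagging. In your final paragraph you assert that the region $\{W\le 0\}$ is bounded on its own, writing ``$W\le 0 \iff r^{\alpha+2}x^{-2}\le\alpha$ cannot hold for $|x|$ or $r$ large''. This is false: the correct inequality is $x^2 r^\alpha\le\alpha$, and the entire $y$-axis (where $x=0$) lies in $\{W<0\}$, so $\{W\le 0\}$ is unbounded. The curve $W=0$ in polar form is $r^{\alpha+2}=\alpha/\cos^2\theta$, which escapes to infinity as $\theta\to\pm\pi/2$. What actually gives the upper bound on $r_n$ is the argument you sketched earlier in the same paragraph: $E(\Gamma_n)\le E^*-\delta$ forces $V(x_n,y_n)\le E^*-\delta$, and it is the \emph{intersection} $\{V\le E^*-\delta\}\cap\{W\le 0\}$ (the region A of Figure~\ref{fig:hlpVW}(B)) that is bounded. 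So drop the standalone claim about $\{W\le 0\}$ and keep the Hill-region argument; with that correction your proof matches the paper's.
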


\begin{proof}
Suppose there is $\delta_0>0$, and a sequence $\{\Gamma_n\}$ satisfying $E(\Gamma_n)\leq E^*-\delta_0$ and $W(\Gamma_n)\leq0$, but $|K(\Gamma_n)|\leq \frac{1}{n}$.  By Lemma \ref{lem:away_sing} and Lemma \ref{lem:away_sing2}, there exists $M>0$, such that when $n\geq M$, the assumptions $|K(\Gamma_n)|<\frac{1}{n}$ and $E(\Gamma_n)\leq E^*-\delta_0$ imply that $r_n\geq \frac{1}{M}.$ Together with $W(\Gamma_n)\leq0$, then there exists a regular point $\Gamma_\infty$ such that $\lim\limits_{n\to\infty}\Gamma_n=\Gamma_\infty$. Therefore $K(\Gamma_\infty)=0, W(\Gamma_\infty)\leq0$ and $E(\Gamma_\infty)\leq E^*-\delta_0$ lead to a contradiction. \end{proof}

\begin{proposition}\label{prop:K1ms}
For $\alpha\geq2$, solutions in $\cW_-^-$ are singular, in particular, finite time collision.  
\end{proposition}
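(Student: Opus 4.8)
The plan is to show that for a solution starting in $\cW_-^-$, the quantity $\ddot I = K$ stays strictly negative and bounded away from zero, forcing $I(t) = \frac12(x^2+y^2)$ to become zero in finite time, which (by the discussion following the definition of singularity) is exactly a collision at the origin. The first step is to establish invariance of $\cW_-^-$. We already know from Lemma \ref{lem:invK12} that $\cW_-$ is invariant, and from Lemma \ref{lem:K1plus} that $\cW_-^+$ is empty for $\a\ge2$; hence $\cW_- = \cW_-^-$ as sets, so the invariance of $\cW_-^-$ is immediate and no separate argument about the sign of $K$ along the flow is needed.

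Next I would fix a solution $\Gamma(t)$ with $\Gamma(0)\in\cW_-^-$. Since energy is conserved, $E(\Gamma(t)) = E(\Gamma(0)) < E^*$, so there is $\delta_0 > 0$ with $E(\Gamma(t)) \le E^* - \delta_0$ for all $t$ in the maximal interval. Because $\cW_-^-$ is invariant we also have $W(\Gamma(t)) \le 0$ throughout. Applying Lemma \ref{lem:kappadelta} with $\delta = \delta_0$ gives a constant $\kappa = \kappa(\delta_0) > 0$ such that $|K(\Gamma(t))| \ge \kappa$ for all $t$. Since $K(\Gamma(0)) < 0$ (it is in $\cW_-^-$, the $K<0$ piece) and $t \mapsto K(\Gamma(t))$ is continuous and never vanishes, it stays negative: $K(\Gamma(t)) \le -\kappa$ for all $t$ in the maximal existence interval.

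Therefore $\ddot I(t) = K(\Gamma(t)) \le -\kappa < 0$. Integrating twice, $I(t) \le I(0) + \dot I(0)\, t - \tfrac{\kappa}{2} t^2$, and since $I \ge 0$ this quadratic bound forces the maximal existence time $T_{\max}$ to be finite. By the remarks in the introduction, a finite-time singularity of the HLP must be a collision, i.e. $(x(t),y(t)) \to (0,0)$ as $t \to T_{\max}$; equivalently $I(t) \to 0$. This completes the proof.

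The main obstacle — and the only nontrivial input — is Lemma \ref{lem:kappadelta}, i.e.\ the quantitative separation of $|K|$ from zero on the region $\{E \le E^* - \delta,\ W \le 0\}$, which in turn rests on Proposition \ref{prop:KWvar} (no regular point has $K=0$, $W\le 0$, $E<E^*$) and on Lemmas \ref{lem:away_sing}–\ref{lem:away_sing2} (such points cannot escape to the singularity $r\to0$), both of which we are entitled to assume. Given those, the argument above is essentially a two-line convexity/ODE estimate; the one point to be careful about is that the continuity argument pinning down the sign of $K$ genuinely needs $\cW_-^+ = \emptyset$ (Lemma \ref{lem:K1plus}), so that $K$ cannot be caught on the "wrong" side, and that the solution cannot leave $\cW_-$ before blow-up, which is Lemma \ref{lem:invK12}.
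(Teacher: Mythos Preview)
Your proof is correct and follows essentially the same approach as the paper: invoke Lemma~\ref{lem:kappadelta} to get $|K|\ge\kappa(\delta)$ along the solution, pin down the sign of $K$ as negative (you do this via $\cW_-^+=\emptyset$ and continuity, which the paper leaves implicit), and then integrate $\ddot I\le-\kappa$ twice to force finite-time collision. The only difference is that you spell out the invariance of $\cW_-^-$ and the sign-determination step more carefully than the paper does.
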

\begin{proof}
Let $\Gamma(t)$ be a solution in $\cW_-^-$ and $\delta=E^*-E(\Gamma)>0$, let $I(t):=I(\Gamma(t))$ and $K(t):=K(\Gamma(t))$ then by Lemma \ref{lem:kappadelta}
\[\frac{d^2 I(t)}{dt^2}=K(t)\leq -\kappa(\delta)<0.\]
Thus $I(t)\leq -\frac{1}{2}\kappa(\delta)t^2+\dot{I}(0)t+I(0)$, since $I=\frac{1}{2}(x^2+y^2)$ is always nonnegative, this implies the maximal time of existence of $\Gamma(t)$ must be finite, i.e. there is a finite time collision.
\end{proof}

Notice that $\cW_-^-=\cW_-$, thus the fate of solutions for the Hill's lunar problem with strong potential $(\alpha\geq2)$ below the ground state energy is entirely determined by the sign of $W$. In particular, solutions in the bounded region $(W\leq 0)$ are singular, and solutions in the unbounded region $(W>0)$ are global, cf. Figure \ref{fig:hlpVW}. For $\alpha<2$, the sign of $K$ in $\cW_-$ is not sign-definite, and in fact there are both collision and global solution in $\cW_-$. See the numerical investigation about this fact in our companion paper \cite{DeIb19_2}. Thus we have completed the proof of Theorem \ref{thm:below}.


\section{At the ground state energy threshold}\label{sec:at}
In this section, we study the case when $E=E^*$. We use the similar definition for the four sets as in (\ref{eq:fourset}) but now with $E(\Gamma)=E^*$. We will denote \[\tilde{\cW}:=\{\Gamma=(x, y, \dot{x}, \dot{y})\in \R^4 | E(\Gamma)=E^*\}.\] From Lemma \ref{lem:K1plus} we know  $\tilde{\cW}_-^+=\{\pm Q\}$, i.e. the equilibria (ground states) of the system. We have $\tilde{\cW}_-^-=\{ \tilde{\cW}| K<0, W\leq0\}$, then

\begin{lemma}
For $\alpha\geq 2$, the set $\tilde{\cW}_-^-$ is invariant, thus the set $\tilde{\cW}_+$ is also invariant.
\end{lemma}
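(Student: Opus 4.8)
The plan is to establish invariance of $\tilde{\cW}_-^-$ directly, and then deduce invariance of $\tilde{\cW}_+$ from the fact that the energy surface $\tilde{\cW}$ decomposes (up to the two equilibria) as $\tilde{\cW}_+ \sqcup \tilde{\cW}_-^-$, the flow is continuous, and both pieces are relatively open in $\tilde\cW \setminus \{\pm Q\}$. More precisely, since $E$ is conserved, a solution starting in $\tilde\cW$ stays in $\tilde\cW$; and by Lemma \ref{lem:K1plus} applied at energy $E^*$ (i.e. $\tilde\cW_-^+ = \{\pm Q\}$), the only points of $\tilde\cW$ with $K \geq 0$ and $W \leq 0$ are the equilibria themselves, which are fixed points of the flow. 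Hence any non-equilibrium solution in $\tilde\cW$ has, at each time, either $W > 0$ (so it is in $\tilde\cW_+$) or $W \leq 0$ with $K < 0$ (so it is in $\tilde\cW_-^-$). Invariance of one of the two sets therefore gives invariance of the other.

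So the core is to show $\tilde\cW_-^-$ is invariant. Let $\Gamma(t)$ be a solution with $\Gamma(0) \in \tilde\cW_-^-$, so $E(\Gamma(t)) \equiv E^*$, $W(\Gamma(0)) \leq 0$, $K(\Gamma(0)) < 0$. The key observation is the same comparison of level curves used in Lemma \ref{lem:invK12}: in configuration space, $E(\Gamma(t)) = E^*$ forces $V(x(t),y(t)) \leq E^*$, and the region $\{V \leq E^*\}$ has exactly two connected components where $W \leq 0$ — namely the bounded ``inner'' region $A$ of Figure \ref{fig:hlpVW}(B) containing the origin, whose closure meets $\{W > 0\}$ only at the two Lagrange points $\pm Q$ (these are the only points on the zero-velocity curve $V = E^*$ that also lie on $W = 0$, by Lemma \ref{lem:ground} / the ground-state analysis). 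Since $\Gamma(0) \in \tilde\cW_-^-$ is not an equilibrium, its configuration projection lies in the open inner region $A$; by continuity the projection can only leave $\{W \leq 0\}$ by passing through $\pm Q$. But passing through an equilibrium is impossible for a non-constant solution (uniqueness of ODE solutions away from the collision). Hence $W(\Gamma(t)) \leq 0$ for all $t$ in the maximal interval of existence — indeed $W(\Gamma(t)) < 0$ once we are away from $\pm Q$, staying strictly inside $A$. It then remains to check $K(\Gamma(t)) < 0$ persists: at energy $E^* - \delta$ this was Lemma \ref{lem:kappadelta}, but here $\delta = 0$, so that argument does not directly apply and must be handled separately.

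To close the $K < 0$ part, I would argue as follows. From the two-constraint Lagrange analysis in Section \ref{sec:Lag2}, on the set $\{E = E^*,\ W = 0\}$ the only point with $K = 0$ is $\Gamma_0 = \pm Q$; and from Section \ref{sec:Lag1} / Proposition \ref{prop:KWvar}, on $\{E = E^*,\ W \leq 0,\ K \geq 0\}$ the only point is again $\pm Q$. Thus on the compact slice of the energy surface with $W \leq 0$ staying a fixed distance $\geq c$ from the origin (which holds by Remark \ref{rmk:away_sing}, since $E^* < -1$ for $\alpha \geq 2$) and at distance $\geq \rho$ from $\pm Q$, the continuous function $K$ is strictly negative, hence bounded above by some $-\kappa(\rho) < 0$. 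Since $W(\Gamma(t)) < 0$ strictly forces the configuration point to stay at a definite distance from $\pm Q$ — the set $\{W \le 0\} \cap \{V \le E^*\}$ touches $\{W=0\}$ only at $\pm Q$, so $W < 0$ is an open condition excluding a neighborhood of $\pm Q$ — we get $K(\Gamma(t)) \leq -\kappa < 0$ along the solution, so in particular $K$ never returns to $0$ and $\Gamma(t)$ remains in $\tilde\cW_-^-$. (As a byproduct $\ddot I = K \leq -\kappa < 0$, which is exactly what Theorem \ref{thm:at} needs for the finite-time collision alternative.)

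The main obstacle I anticipate is the degenerate $\delta = 0$ situation: one cannot simply invoke Lemma \ref{lem:kappadelta}, and one must rule out a solution in $\tilde\cW_-^-$ asymptotically approaching $\pm Q$ (where $W \to 0$ and $K \to 0$) in finite time while crossing from $W < 0$ to $W \geq 0$. The resolution is that approach to $\pm Q$ can only happen as $t \to \pm\infty$ (a solution cannot reach an equilibrium in finite time, by uniqueness), so on any finite time interval the configuration point stays uniformly away from $\pm Q$, restoring a uniform bound $K \leq -\kappa < 0$ there; and the strict inequality $W < 0$ is self-propagating precisely because the boundary $W = 0$ inside $\{V \le E^*\}$ is reached only at the equilibria. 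I would make sure to state clearly, perhaps as a preliminary remark, that $\{V < E^*\} \cap \{W \le 0\}$ is an open set whose closure meets $\{W = 0\}$ only at $\pm Q$, as this topological fact is what the whole argument rests on.
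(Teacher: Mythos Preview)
Your proof is correct and follows essentially the same route as the paper: invariance of $\{E=E^*,\ W\le 0\}$ via the Hill-region geometry (the inner region $A$ can only be exited through the Lagrange points, which a non-equilibrium solution cannot reach in finite time), together with the identification $\tilde{\cW}_-^+=\{\pm Q\}$ from Proposition~\ref{prop:KWvar}. The paper then finishes in one line: since $\tilde{\cW}_-$ is invariant and $\tilde{\cW}_-^+=\{\pm Q\}$ is invariant (equilibria), the complement $\tilde{\cW}_-^-=\tilde{\cW}_-\setminus\{\pm Q\}$ is invariant.

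The only comment is that your last two paragraphs are redundant. You already observed in your first paragraph that on $\tilde{\cW}$ every non-equilibrium point with $W\le 0$ automatically has $K<0$; once you have shown that $W(\Gamma(t))\le 0$ persists and $\Gamma(t)\neq\pm Q$ (uniqueness), the conclusion $K(\Gamma(t))<0$ is immediate from that pointwise fact, with no need for the compactness argument producing a $\kappa(\rho)$. In fact a \emph{uniform} lower bound $K\le -\kappa<0$ along the whole trajectory need not hold (this is precisely the alternative in Theorem~\ref{thm:at} where the solution approaches $\pm Q$ as $t\to\infty$), so your parenthetical remark claiming this as a byproduct for the collision alternative overstates what you have shown.
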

\begin{proof}
When $E=E^*$ we have \[\tilde{\cW}_-=\tilde{\cW}_-^+\cup\tilde{\cW}_-^-=\{E=E^*, W\leq 0\},\] we can show $\tilde{\cW}_-$ is invariant though figure \ref{fig:hlpVW} as well as the invariance of $\tilde{\cW}_+=\{E=E^*, W> 0\}$. Furthermore, since $\tilde{\cW}_-^+=\{\pm Q\}$ is invariant, so is $\tilde{\cW}_-^-$. 
\end{proof}

\begin{proposition}
For $\alpha\geq2$, solutions in $\tilde{\cW}_-^-$ either have a finite time collision or approach the ground state as $t\to\infty$.
\end{proposition}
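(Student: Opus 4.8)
The plan is to analyze the sign of $K$ along a solution $\Gamma(t)$ in $\tilde{\cW}_-^-$ using the identity $\ddot I = K$. Since $\tilde{\cW}_-^-$ is invariant, we know $K(t) < 0$ and $W(t) \leq 0$ for all $t$ in the interval of existence, so $I(t)$ is strictly concave. The immediate dichotomy is then: either $I(t)$ hits zero in finite time (a collision, and by the earlier discussion this must be a collision with the origin), or the solution exists globally with $I(t) > 0$ for all $t \geq 0$. What remains is to show that in the global case, the solution must approach the ground state as $t \to \infty$. So the real content is the asymptotic analysis of global solutions in $\tilde{\cW}_-^-$.

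First I would extract consequences of concavity of $I(t)$ on $[0,\infty)$: a positive, strictly concave function on a half-line must be nondecreasing and bounded, hence $\dot I(t) \to 0$ and $I(t) \to I_\infty$ for some finite $I_\infty \geq 0$. Concavity also forces $\ddot I(t) = K(t) \to 0$ (more precisely $\int_0^\infty |K(t)|\,dt = \dot I(0) - \lim \dot I(t) < \infty$, and combined with some regularity one gets $K(t)\to 0$; alternatively use that a bounded concave function has second derivative tending to zero along a sequence, then upgrade). Next I would invoke Remark \ref{rmk:away_sing} together with Lemma \ref{lem:away_sing}/\ref{lem:away_sing2}: since $E = E^* < -1$ for $\alpha \geq 2$ and $K(t) \to 0$, eventually $|K(t)|$ is small, so $r(t) \geq c > 0$ for all large $t$. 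Thus the orbit stays in a compact region bounded away from the singularity, and in particular $\dot x, \dot y$ stay bounded (from the energy relation). So the $\omega$-limit set $\Omega$ of $\Gamma(t)$ is a nonempty, compact, connected, flow-invariant subset of the regular phase space.

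The key step is to show $\Omega \subseteq \{\pm Q\}$. Every point $\Gamma_\infty \in \Omega$ satisfies $E(\Gamma_\infty) = E^*$ and $W(\Gamma_\infty) \leq 0$; moreover $K(\Gamma_\infty) = \lim K(t) = 0$ along the approaching sequence, so actually $K \equiv 0$ on all of $\Omega$ by invariance and continuity. Hence $\Omega$ lies in $\{E = E^*, K = 0, W \leq 0\}$. Now I would use the variational characterization: by Proposition \ref{prop:KWvar}, $\inf\{E \mid K \geq 0, W \leq 0\} = E^*$ and this infimum is attained exactly at $\pm Q$ — so the constraint set $\{E = E^*, K = 0, W \leq 0\}$ meets the boundary of $\{K \geq 0, W \leq 0\}$ precisely at the minimizers $\pm Q$. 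More carefully: any point in $\Omega$ is a point where $E$ attains its constrained minimum $E^*$ over $\{K \geq 0, W \leq 0\}$, hence (being in the interior of the existence region) is a critical point of the Lagrange system analyzed in Sections \ref{sec:Lag1}–\ref{sec:Lag2}, whose only solution compatible with $E = E^*$ is $\Gamma_0 = \pm Q$. Since $\{+Q\}$ and $\{-Q\}$ are each flow-invariant equilibria and $\Omega$ is connected, $\Omega = \{+Q\}$ or $\Omega = \{-Q\}$, i.e. $\Gamma(t) \to \pm Q$ as $t \to \infty$.

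The main obstacle I anticipate is making rigorous the passage "$\dot I \to 0$ and $I$ bounded $\Rightarrow K(t) \to 0$"; concavity alone gives $\int_0^\infty |K| < \infty$ and $\liminf |K| = 0$, but genuine convergence $K(t) \to 0$ needs either uniform continuity of $K(\Gamma(t))$ (which follows once we know the orbit is precompact in the regular region — a mild circularity to untangle, handled by first getting $r \geq c$ on a sequence of times, then bootstrapping) or a Barbalat-type argument. A clean way around it: one does not actually need $K(t) \to 0$ everywhere — it suffices that $K = 0$ on the $\omega$-limit set, and since $\dot I$ is monotone and bounded it converges, so $\ddot I = K$ must tend to $0$ in a time-averaged sense which, by invariance of $\Omega$ and the relation $K = \ddot I$, forces $\ddot I \equiv 0$ on $\Omega$; hence $K \equiv 0$ there. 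With that, the variational rigidity from Proposition \ref{prop:KWvar} does the rest, and no delicate Barbalat argument is strictly required.
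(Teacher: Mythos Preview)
Your argument is correct and follows the same route as the paper: concavity of $I$ from $\ddot I=K<0$, then in the global case the $\omega$-limit set lies in $\{E=E^*,\,K=0,\,W\leq 0\}=\{\pm Q\}$. Two minor remarks: boundedness of $I$ comes from the bounded configuration projection of $\tilde{\cW}_-$, not from positivity and concavity alone (consider $\sqrt{t+1}$); and the circularity you flag is not actually present, since $I$ nondecreasing already gives $r(t)\geq r(0)>0$, so precompactness holds without first establishing $K\to 0$.
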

\begin{proof}
Let $\Gamma(t)$ be a solution in $\tilde{\cW}_-^-$, if $K(t)$ is uniformly bounded below zero, i.e. there exists $\delta>0$ so that $K(t)\leq -\delta$, then from the proof of Proposition \ref{prop:K1ms}, we know $\Gamma(t)$ experiences a finite time collision. Now if $K(t)$ is not uniformly bounded below zero, that is, there is a sequence of time $\{t_n\}$ so that $K(t_n)\to0^-$ as $t_n\to\sigma$, where $\sigma$ is the maximal time of existence. Since $\ddot{I}(t)=K(t)<0$, $\dot{I}(t)$ is decreasing, thus $\lim_{t\to\sigma}\dot{I}(t)$ exists and cannot be $-\infty$ due to $\lim_{n\to\infty}K(t_n)=0$. Assume $\lim_{t\to\sigma}\dot{I}(t)=a$, if $a<0$, then $I(t)$ exists for finite time, i.e. there is a collision. On the other hand, if $a\geq0$, then $\dot{I}(t)>0$ for all time. That is, $I(t)$ is always increasing, thus $\sigma=\infty$. Since $I(t)$ is always bounded in $\tilde{\cW}_-^-$, $a$ must be zero. Thus $\lim_{t\to\infty}K(t)=0$ and the solution $\Gamma(t)$ approaches the set $\{E=E^*, K=0, W\leq0\}=\{\pm Q\}$ as $t\to\infty$. 

\end{proof}

Moreover, we know the ground states as equilibria are of saddle-center type, see section \ref{sec:above} and Figure \ref{fig:hlpLinear}. The solution in $\tilde{\cW}_-^-$ which approaches $\pm Q$ as $t\to\infty$ only when it lies on the stable manifolds of $\pm Q$.

\begin{proposition}
Solutions in $\tilde{\cW}_+=\{E=E^*, W>0\}$ exists for all time.
\end{proposition}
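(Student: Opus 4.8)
The plan is to show that any solution starting in $\tilde{\cW}_+ = \{E=E^*, W>0\}$ stays uniformly away from the origin, so that the local Lipschitz vector field cannot blow up in finite time. Since the energy is conserved and equals $E^*$, the configuration $(x(t),y(t))$ is confined to the Hill's region $\{V \le E^*\}$; combined with $W>0$, invariance (already established in the preceding lemma) keeps the orbit in the outer component of $\{V \le E^*, W > 0\}$, which is the two unbounded regions labelled B in Figure \ref{fig:hlpVW}. The first step is therefore to record that on these regions $r$ is bounded below by a positive constant: the region B is the closure of $\{V < E^*, W>0\}$ minus the ground states, and since $W = (\a+2)(x^2 - \alpha/r^\alpha) > 0$ forces $r^\alpha > \alpha/x^2$ while $V \le E^*$ forces $x^2$ (and hence $r$) to be bounded above, one gets $r \gtrsim 1$ uniformly on B. Actually the cleanest route is to note that the only point of the closed set $\{V \le E^*, W \ge 0\}$ with $r=0$ would require $V \to -\infty$, impossible; and the component B does not even touch the ground states in the sense needed — but to be safe one argues directly with the explicit inequalities above.

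Second, having $r(t) \ge c > 0$ for all $t$ in the maximal interval of existence, I invoke the Definition in the introduction: the only possible singularity for HLP is finite-time collision at the origin, i.e. if $T_{\max} < \infty$ then $(x(t),y(t)) \to (0,0)$, contradicting $r(t) \ge c$. Hence $T_{\max} = \infty$ and the solution is global. One could also argue more self-containedly: on $r \ge c$ the vector field $(-V_x, -V_y)$ is smooth and, on the region where additionally $V \le E^*$ (so $x$ bounded), it is bounded, so $(\ddot x, \ddot y)$ is bounded in terms of $(\dot x, \dot y)$; together with energy conservation giving $\dot x^2 + \dot y^2 = 2(E^* - V) \le 2(E^* - \min_{\bar B} V) < \infty$ on the relevant bounded configuration region, the velocities stay bounded, so no blow-up.

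The main obstacle is purely the geometric claim that the outer Hill's region B stays a definite distance from the origin. This is visually obvious from Figure \ref{fig:hlpVW}(B) and was used implicitly already in Lemma \ref{lem:invK12} for the $E<E^*$ case, but at the threshold $E=E^*$ one must make sure the ground states $\pm Q$ (which lie on the boundary between the bounded region A and the regions B, or rather on $\partial\{V \le E^*\}$) do not cause the outer region to pinch down toward the origin; since $\pm Q$ are on the $x$-axis at radius $r_0 = \alpha^{1/(\alpha+2)} > 0$, this is not an issue, and a short argument with the explicit formulas for $V$ and $W$ along the lines above closes it. I would present the proof as: (1) by invariance, $\Gamma(t) \in \tilde{\cW}_+$ for all $t$, so $V(x(t),y(t)) \le E^*$ and $W(x(t),y(t)) > 0$; (2) deduce $r(t) \ge c$ for a uniform $c>0$ from these two inequalities plus the explicit forms of $V, W$; (3) conclude no finite-time collision is possible, hence global existence.

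\begin{proof}
By the preceding lemma, $\tilde{\cW}_+ = \{E = E^*, W > 0\}$ is invariant, so for a solution $\Gamma(t) = (x(t), y(t), \dot x(t), \dot y(t))$ with $\Gamma(0) \in \tilde{\cW}_+$ we have $E(\Gamma(t)) = E^*$ and $W(\Gamma(t)) > 0$ for all $t$ in the maximal interval of existence $[0, T_{\max})$. Since the energy is conserved, $V(x(t), y(t)) \le E^*$ for all such $t$.

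We claim there is $c > 0$, depending only on $\alpha$, with $r(t) := \sqrt{x(t)^2 + y(t)^2} \ge c$ on $[0, T_{\max})$. Indeed, $V(x,y) \le E^*$ gives $\frac{\a+2}{2} x^2 + \frac{\a+2}{r^\alpha} \ge -E^* = \frac{1}{2}(\a+2)^2 \a^{-\frac{\a}{\a+2}}$, hence in particular $x^2 \le (\a+2)\a^{-\frac{\a}{\a+2}} =: M$. On the other hand, $W(x,y) > 0$ means $x^2 > \frac{\alpha}{r^\alpha}$, so $r^\alpha > \frac{\alpha}{x^2} \ge \frac{\alpha}{M}$, giving $r \ge (\alpha/M)^{1/\alpha} =: c > 0$. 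This proves the claim.

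Since the vector field of (\ref{eq:hlp}) is locally Lipschitz on $\{r \ne 0\}$, the only way the solution can fail to be global is by a finite-time collision: as recalled in the introduction, if $T_{\max} < \infty$ then $(x(t), y(t)) \to (0,0)$ as $t \to T_{\max}$, i.e. $r(t) \to 0$. This contradicts $r(t) \ge c > 0$. Therefore $T_{\max} = \infty$, and solutions in $\tilde{\cW}_+$ exist for all time.
\end{proof}
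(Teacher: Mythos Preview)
Your overall strategy matches the paper's one-line proof (``because every solution is uniformly away from the origin''): use invariance of $\tilde{\cW}_+$, show $r$ is bounded below, and conclude that no finite-time collision can occur. However, your explicit derivation of the lower bound on $r$ contains an error. From $V(x,y)\le E^*$ you correctly obtain
\[
\frac{\a+2}{2}x^2+\frac{\a+2}{r^\alpha}\ \ge\ -E^*,
\]
but this is a \emph{lower} bound on a sum of nonnegative terms; it cannot imply the \emph{upper} bound $x^2\le M$ that you then assert. Indeed, the outer regions B in Figure \ref{fig:hlpVW} are unbounded, so $x^2$ is certainly not bounded above on $\{V\le E^*,\ W>0\}$, and your argument for $r^\alpha>\alpha/M$ collapses.

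The repair is immediate and in fact does not use the energy constraint at all. From $W=(\a+2)\bigl(x^2-\alpha/r^\alpha\bigr)>0$ one has $x^2>\alpha/r^\alpha$; since $r^2\ge x^2$, this yields $r^{\alpha+2}>\alpha$, i.e.\ $r>\alpha^{1/(\alpha+2)}=:c>0$. With this corrected bound, the rest of your proof (no collision, hence $T_{\max}=\infty$) goes through unchanged.
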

\begin{proof}
Because every solution is uniformly away from the origin.
\end{proof}

Thus we have completed the proof of Theorem \ref{thm:at}.


\section{Above the ground state energy}\label{sec:above}

Following Nakanishi-Schlag \cite{NaSc11}\cite{NaSc12}, to study the fate above the ground state energy with $E<E^*+\epsilon^2$ we need to consider two scenarios: first, the \emph{stable} behavior of solutions near the ground states on the center-stable manifold of $\pm Q$; second, the solutions that enter, but then again leave, a neighborhood of the ground states. These are the \emph{non-trapped} trajectories. We will only describe the dynamics of non-trapped trajectories after they exit a ball of size $2\epsilon$, where the size is given under a suitable distance function relative to the ground states. We treat the region inside the $2\epsilon$ ball around the ground states as a ``black box'' and we do not analyze at all, since we are either trapped or being ejected out of that ball, and then carried to much larger distances from the ground states.

For the non-trapped trajectories, we study the Hill's Lunar Problem concerning the properties:

1. ``ejection lemma'', that is, solutions that do not remain close to the ground state for all positive times are ejected from any small neighborhood of it after some positive time. Moreover, the solution cannot return to that neighborhood, which is known as the ``one-pass theorem'' or ``no-return'' property.  

2. The variational estimates of the ground state energy and $K, W$. 

In the PDE case, Nakanishi-Schlag \cite{NaSc11} proved the existence of the center-stable (unstable etc.) invariant manifolds of the ground state in the infinity dimensional case. For the Hill's Lunar problem, the phase space is of dimension four, and we know $\pm Q$ are of center-saddle type. In particular, the matrix associated with the linearized Hill's equation at each ground state have a pair of real eigenvalues of equal magnitude and opposite sign \EQ{\pm k:=\pm\frac{1}{\sqrt{2}}\sqrt{\sqrt{36+36\a+29\a^2+10\a^3+\a^4}+(\a^2+3\a-2)},} and a pair of pure imaginary complex conjugate eigenvalues \EQ{\pm i\omega:=\pm \frac{i}{\sqrt{2}}\sqrt{\sqrt{36+36\a+29\a^2+10\a^3+\a^4}-(\a^2+3\a-2)}.} By standard invariant manifolds theory (cf. \cite{HPS77}) in dynamical systems, the stable, unstable, center-stable, center-unstable manifolds exist for $\pm Q$. In Figure \ref{fig:hlpLinear}, we give the tangent line of the projection of the stable and unstable manifold into the configuration space at the Lagrange point $L_2$. The qualitative picture of the flow near the equilibrium resemble the linearized flow as we shall show in the following. In order to take full advantage of the Hamiltonian nature of the HLP, we will use symplectic coordinates in the following.

\begin{figure}[ht]
	\centering
	\includegraphics[width=0.55\textwidth]{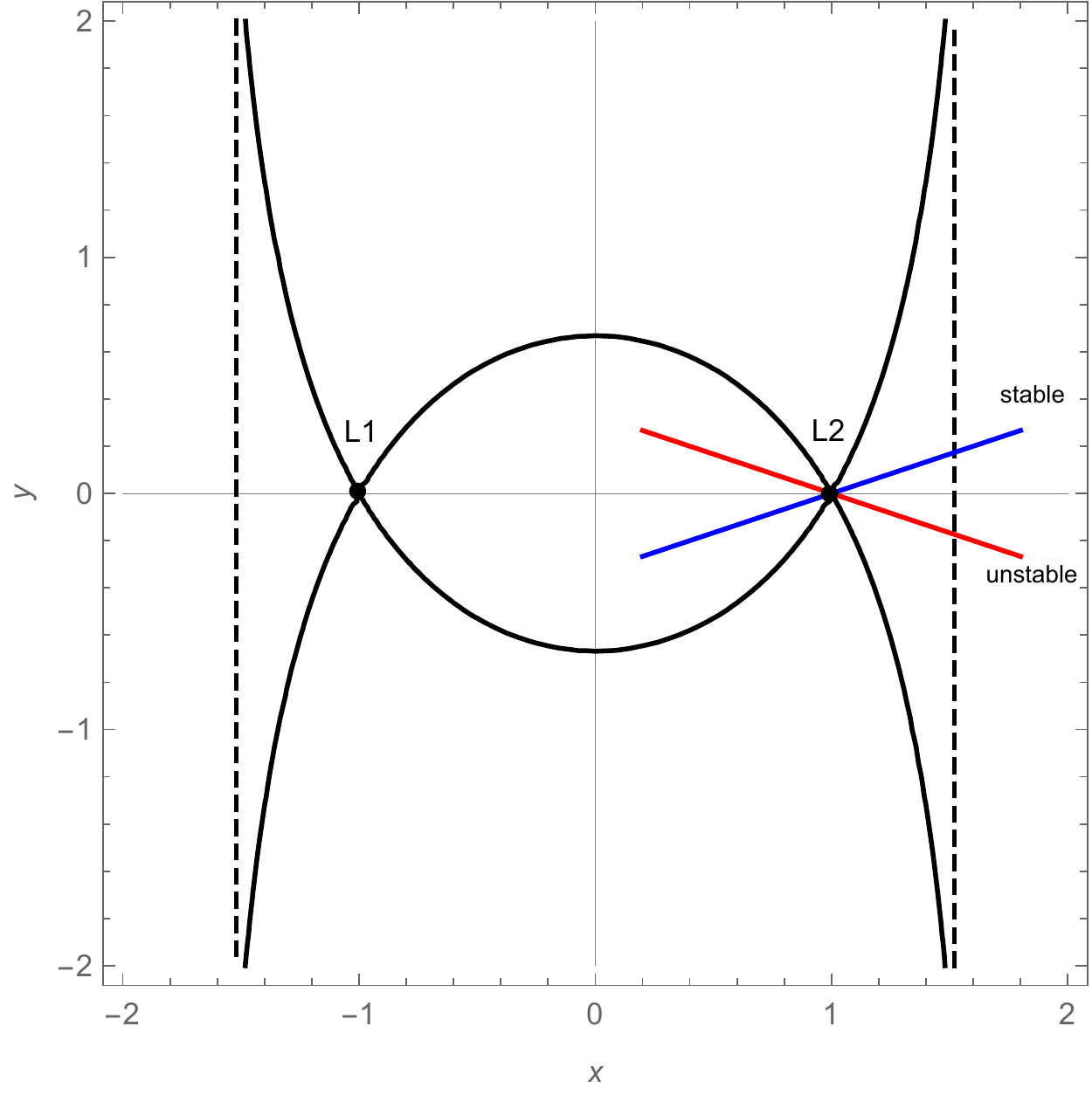}
	\caption{Zero velocity curve (black solid) of the ground state energy, and tangent lines (blue and red) of the projection of the stable and unstable manifold into the configuration space at the Lagrange point $L_2$.}
\label{fig:hlpLinear}
\end{figure}

\subsection{Symplectic algebra and Hill's equations in Symplectic form}\label{sec:sym_algebra}
Let $J=\begin{pmatrix}0&I\\-I&0\end{pmatrix}$ be the standard symplectic matrix in $\R^{2n}$ and $\Omega(\xi, \eta)=J\xi\cdot\eta=\xi^TJ^T\eta$ be the standard symplectic form on $\R^{2n}$. The symplectic group, i.e. the linear transformations that preserves $\Omega$, is \[Sp(2n)=\{A\in\R^{2n\times 2n}| A^TJA=J\}.\] The infinitesimally symplectic group is \[sp(2n)=\{A\in\R^{2n\times 2n}| A^TJ+JA=0\}.\] Note that if $A\in sp(2n)$ then $e^A\in Sp(2n)$. For a linear Hamiltonian equation $\dot{X}=JSX$ where $S^T=S$, the matrix $A=JS\in sp(2n)$ and $e^{tJS}\in Sp(2n)$ (cf. \cite{AbMa08}).

\begin{lemma}
If $A\in sp(2n)$, then $\Omega(A\xi,\eta)=-\Omega(\xi, A\eta)$. Thus if $\xi, \eta$ are eigenvectors of $A$ with eigenvalues $\lambda, \mu$, then $\Omega(\xi, \eta)=0$ provided $\lambda+\mu\neq0$.
\end{lemma}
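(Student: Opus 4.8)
The plan is to reduce everything to the defining identity $A^T J + JA = 0$ of $sp(2n)$ together with the skew-symmetry $J^T = -J$ of the standard symplectic matrix, after which both assertions follow by direct linear algebra.

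First I would compute $\Omega(A\xi,\eta)$ straight from the definition $\Omega(\xi,\eta)=\xi^T J^T \eta$. Writing $\Omega(A\xi,\eta)=(A\xi)^T J^T \eta = \xi^T A^T J^T \eta$ and using $J^T=-J$ followed by the relation $A^T J = -JA$, the product $A^T J^T$ collapses to $JA$, so that $\Omega(A\xi,\eta)=\xi^T J A \eta$. On the other hand $\Omega(\xi,A\eta)=\xi^T J^T A\eta = -\xi^T J A\eta$, again by $J^T=-J$. Comparing the two expressions gives $\Omega(A\xi,\eta)=-\Omega(\xi,A\eta)$, which is the first claim.

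For the second claim I would simply substitute the eigenvalue relations $A\xi=\lambda\xi$ and $A\eta=\mu\eta$ into the identity just proved: the left-hand side becomes $\lambda\,\Omega(\xi,\eta)$ and the right-hand side $-\mu\,\Omega(\xi,\eta)$, hence $(\lambda+\mu)\,\Omega(\xi,\eta)=0$, so $\Omega(\xi,\eta)=0$ whenever $\lambda+\mu\neq 0$. The only point requiring a word of care — and the closest thing to an obstacle here — is that in the intended application the relevant eigenvalues $\pm k$ and $\pm i\omega$ are complex, so one must first extend $\Omega$ to a $\C$-bilinear (not Hermitian) form on $\C^{2n}$; since the identity $A^T J + JA = 0$ and the matrix manipulation above are insensitive to this extension, the argument carries over verbatim. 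Beyond this remark the proof is a routine computation with no genuine difficulty.
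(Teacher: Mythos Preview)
Your argument is correct and essentially identical to the paper's: the paper gives the one-line chain $\Omega(A\xi,\eta)=\xi^TA^TJ^T\eta=\xi^TJA\eta=-\xi^TJ^TA\eta=-\Omega(\xi,A\eta)$ and then says the eigenvector statement follows directly. Your added remark about extending $\Omega$ $\C$-bilinearly to handle the complex eigenvalues $\pm i\omega$ is a useful clarification the paper omits, but otherwise the approaches coincide.
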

\begin{proof}
\[\Omega(A\xi,\eta)=\xi^TA^TJ^T\eta=\xi^TJA\eta=-\xi^TJ^TA\eta=-\Omega(\xi,A\eta).\]
Then second statement follows directly from this property.
\end{proof}

Consider the Hill's equations with coordinates $q=(x,y)$ and $p=(p_x,p_y)=(\dot{x}-y, \dot{y}+x)$, the Hamiltonian, i.e. the energy is \EQ{E(x, y, p_x,p_y)=\frac{1}{2}[(p_x+y)^2+(p_y-x)^2]+V(x,y),} and the function $K$ (cf. (\ref{eq:Kdef})) is 
\EQ{\label{eq:Ksym}K(x,y,p_x,p_y)=p_x^2 +p_y^2+(\a+1)x^2-y^2-\a(\a+2)r^{-\a}.}

The Hill's equations (\ref{eq:hlp}) in Symplectic canonical form is \begin{equation}\label{eq:hlpsym}\dot{q}=\frac{\partial E}{\partial p}, \quad \dot{p}=-\frac{\partial E}{\partial q}.\end{equation} That is, \[\begin{pmatrix}\dot{q}\\\dot{p}\end{pmatrix}=J\nabla E.\]

The ground states in the $(q,p)$ coordinates are 
\EQ{\label{eq:deftQ}\pm\tilde{Q}:=\pm(q_0,0,0,q_0),\quad q_0=\a^{\frac{1}{\a+2}}.}

 Linearize (\ref{eq:hlpsym}) at $\tilde{Q}$ one obtains \[\begin{pmatrix}\dot{q}\\\dot{p}\end{pmatrix}=J\nabla^2 E(\tilde{Q})\begin{pmatrix}q\\p\end{pmatrix},\] where \begin{equation}\label{eq:hesE}\nabla^2 E(\tilde{Q})=\begin{pmatrix}1-(\a+2)^2&0&0&-1\\0&1+(\a+2)&1&0\\0&1&1&0\\-1&0&0&1\end{pmatrix}.\end{equation} Note that it is easy to check \EQ{\label{eq:hesEpmQ} \nabla^2 E(\tilde{Q})=\nabla^2 E(-\tilde{Q}),} which is extremely helpful in our analysis. 

 Let $A=J\nabla^2 E(\tilde{Q})$, the eigenvalues of $A$ are $\pm k, \pm i\omega$. Let $\xi_+$ and $\xi_-$ be the eigenvectors of $k$ and $-k$ respectively, and we assume $\xi_+$ and $\xi_-$ are normalized in the following sense: 
\begin{equation}\label{eq:defxipm}\Omega(\xi_+, \xi_-)=1,\quad \Omega(\xi_-,\xi_+)=-1,\quad |\xi_+|=|\xi_-|,\end{equation} where $|\cdot|$ denotes the Euclidean norm of a vector. It is obvious that \begin{equation}\Omega(\xi_+, \xi_+)=0,\quad \Omega(\xi_-,\xi_-)=0.\end{equation} See Figure \ref{fig:hlpLinear} for the projection of $\xi_+, \xi_-$ to the configuration space at $\tQ$.

Write $(q,p)=\tilde{Q}+X$, where $X$ is the perturbation and we have \begin{equation}\dot{X}=J\nabla^2 E(\tilde{Q})X+N(X)=AX+N(X),\end{equation} where $N(X)$ stands for the non-linear terms. Decompose $\R^4$ as $E^u\bigoplus E^s\bigoplus E^c$ where $E^{u,s,c}$ are the eigenspace for $k, -k, \pm i\omega$ respectively. Write $X$ as follows: \begin{equation}\label{eq:symdec}X=\lambda_+(t)\xi_++\lambda_-(t)\xi_-+\gamma(t),\end{equation} where \begin{equation}\gamma(t)\in E^c,\quad\Omega(\gamma(t),\xi_+)=\Omega(\gamma(t),\xi_-)=0.\end{equation} One has $\lambda_{\pm}=\pm\Omega(X, \xi_{\mp})$ and we can derive the differential equations for $\lambda_{\pm}(t)$.

\begin{equation}
\begin{split}
\frac{d\lambda_{\pm}}{dt}(t)&=\pm\Omega(\dot{X}, \xi_{\mp})\\&=\pm\Omega(AX+N(X), \xi_{\mp})\\&=\pm\Omega(AX, \xi_{\mp})\pm\Omega(N(X), \xi_{\mp})\\&=\pm k\lambda_{\pm}(t)\pm\Omega(N(X), \xi_{\mp}).
\end{split}
\end{equation} Thus

\EQ{\label{eq:lmpm}\frac{d\lambda_{+}}{dt}(t)=k\lambda_{+}(t)+N_+(X), \quad\frac{d\lambda_{-}}{dt}(t)=-k\lambda_{-}(t)-N_-(X),} where

\EQ{N_+(X)=\Omega(N(X), \xi_{-}),\quad N_-(X)=\Omega(N(X), \xi_+).} 

Moreover, we have
\begin{equation}
\label{eq:labound1}
\lambda^2_{\pm}=\Omega(X, \xi_{\mp})^2\leq|\xi_+|^2|X|^2.
\end{equation}

\subsection{Linearized energy norm}
In this section we want to define the ``linearized energy norm'' for the perturbation $X$. First, as a result of the symplectic orthogonality of $\xi_\pm$ and $\gamma$, we have the following relationship:

\begin{lemma}
\label{lem:gasim}
The function $\gamma(t)$ in the decomposition (\ref{eq:symdec}) satisfies \[\Omega(\gamma, A\gamma)\sim |\gamma|^2.\]
\end{lemma}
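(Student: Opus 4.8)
\textbf{Proof proposal for Lemma \ref{lem:gasim}.}

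The plan is to show that the quadratic form $\gamma \mapsto \Omega(\gamma, A\gamma)$ is positive definite on the center subspace $E^c$, so that by compactness of the unit sphere in the finite-dimensional space $E^c$ it is comparable to $|\gamma|^2$. The subtlety, and the reason the symplectic orthogonality $\Omega(\gamma,\xi_+) = \Omega(\gamma,\xi_-) = 0$ is invoked in the statement, is that $E^c$ here means the $A$-invariant center eigenspace spanned by the eigenvectors for $\pm i\omega$, but $\gamma(t)$ is the component of $X$ that is $\Omega$-orthogonal to $\xi_\pm$; one first checks these two descriptions coincide. Indeed, by the Lemma preceding the statement, since $\xi_+, \xi_-$ are eigenvectors for $k, -k$ and the $\pm i\omega$ eigenvectors $\eta_\pm$ satisfy $k + (\pm i\omega) \neq 0$ and $-k + (\pm i\omega) \neq 0$, we get $\Omega(\eta_\pm, \xi_+) = \Omega(\eta_\pm, \xi_-) = 0$, so $E^c \subseteq \{\Omega(\cdot,\xi_+) = \Omega(\cdot,\xi_-) = 0\}$; a dimension count (the right-hand side is $2$-dimensional since $\xi_\pm$ are $\Omega$-independent) gives equality. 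Hence $\gamma \in E^c$ genuinely.

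Next I would compute $\Omega(\gamma, A\gamma)$ on $E^c$ explicitly. Restricted to $E^c$, the flow $e^{tA}$ is conjugate to a rotation, and $A|_{E^c}$ is infinitesimally symplectic with eigenvalues $\pm i\omega$; in a suitable real symplectic basis $\{e_1, e_2\}$ of $E^c$ (with $\Omega(e_1,e_2) = 1$) one has $A e_1 = \omega e_2$, $A e_2 = -\omega e_1$ up to a positive scaling, so that writing $\gamma = a e_1 + b e_2$ gives $\Omega(\gamma, A\gamma) = \omega(a^2 + b^2) \cdot c_0$ for some constant $c_0 > 0$ coming from the normalization of the basis. The key point is the sign: $\Omega(\gamma, A\gamma)$ is (up to sign, fixable by choice of orientation of the symplectic basis) a positive multiple of $|\gamma|^2$ in that basis. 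Concretely, since $\Omega(e^{tA}\gamma, A e^{tA}\gamma) = \Omega(\gamma, A\gamma)$ is conserved along the linear center flow and the center flow is a genuine rotation (bounded, non-degenerate), the form cannot be indefinite or degenerate on $E^c \setminus \{0\}$, hence is definite; orient so it is positive.

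Finally, both $\gamma \mapsto \Omega(\gamma,A\gamma)$ and $\gamma \mapsto |\gamma|^2$ are continuous quadratic forms on the finite-dimensional space $E^c$, and the former is positive on the unit sphere $\{|\gamma| = 1\} \cap E^c$, which is compact; therefore it attains a positive minimum $m$ and a finite maximum $M$ there, giving $m|\gamma|^2 \le \Omega(\gamma, A\gamma) \le M|\gamma|^2$, i.e. $\Omega(\gamma,A\gamma) \sim |\gamma|^2$. The main obstacle is purely the verification that the form is \emph{definite} (not merely non-degenerate) with the correct sign on $E^c$; this is where the structure of $A$ as an element of $sp(4)$ with a purely imaginary pair of eigenvalues, together with the preceding lemma on $\Omega$-orthogonality of eigenvectors with non-opposite eigenvalues, does the real work. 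Everything else is a compactness argument in $\R^4$.
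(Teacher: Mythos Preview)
Your approach is essentially the paper's: write $\gamma$ in a real basis of $E^c$, compute $\Omega(\gamma,A\gamma)$ as $\omega(a^2+b^2)$ times a nonzero constant, and invoke compactness of the unit sphere in the two-dimensional center space. Your preliminary remark that the $\Omega$-orthogonal complement of $\xi_\pm$ coincides with the $A$-invariant center eigenspace is a useful clarification the paper leaves implicit.

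The one genuine weak point is the sign. You argue (correctly) that the invariance of $\Omega(\gamma,A\gamma)$ under the periodic linear flow on $E^c$ forces the form to be \emph{definite} there, and then write ``orient so it is positive''. But the sign is not a matter of orientation: since $J^TJ=I$ one has $\Omega(\gamma,A\gamma)=\gamma^T J^T J S\gamma=\langle S\gamma,\gamma\rangle$ with $S=\nabla^2E(\tilde Q)$, an intrinsic quadratic form. Equivalently, in your basis calculation the constant you call $c_0$ equals (up to normalization) $\Omega(e_1,e_2)$ paired with the sign coming from $A|_{E^c}$, and this product is fixed by $A$ and $\Omega$, not by a choice of basis. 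The paper handles this by a direct check: with $\eta=\eta_1+i\eta_2$ the eigenvector for $i\omega$, one verifies by computation that $\Omega(\eta_1,\eta_2)>0$ for the explicit $A=J\nabla^2E(\tilde Q)$, then normalizes to $\Omega(\eta_1,\eta_2)=1$ and obtains $\Omega(\gamma,A\gamma)=\omega(a^2+b^2)>0$. You should do the same explicit check (or, equivalently, verify that the Hessian $S$ is positive on $E^c$); your abstract argument gives definiteness but not the sign, and the Lemma as stated requires positivity.
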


\begin{proof}
Let $\eta=\eta_1+i\eta_2$ be the eigenvector of $i\omega$, thus $E^c=\mathrm{Span}\{\eta_1, \eta_2\}$ and we have $A\eta_1=-\omega \eta_2$, $A\eta_2=\omega \eta_1$. One can check that $\Omega(\eta_1,\eta_2)>0$, thus we shall assume $\eta_1, \eta_2$ are normalized in the sense that \[\Omega(\eta_1, \eta_2)=1.\] Thus $\xi_+, \xi_-, \eta_1, \eta_2$ form a symplectic basis. In particular, the matrix $P=(\xi_+\,\, \eta_1\,\, \xi_-\,\, \eta_2)\in Sp(4)$ and $A=PBP^{-1}$ where \[B=\begin{pmatrix}k&0&0&0\\0&0&0&\omega\\0&0&-k&0\\0&-\omega&0&0\end{pmatrix}.\] Since $\gamma\in E^c$, we assume $\gamma=a\eta_1+b\eta_2$, thus $P^{-1}\gamma=(0,a,0,b)$.
\begin{equation}
\begin{split}
\Omega(\gamma, A\gamma)&=\Omega(\gamma, PBP^{-1}\gamma)\\
&=\Omega(P^{-1}\gamma, BP^{-1}\gamma)\\
&=\omega(a^2+b^2).
\end{split}
\end{equation}
The second equality is due to that $P$ is symplectic. The last equality is by direct computations. Since $\eta_1, \eta_2,\omega$ are constant quantities, we conclude that $\Omega(\gamma, A\gamma)\sim |\gamma|^2$. 
\end{proof}

By Taylor expansion and analysis in section \ref{sec:sym_algebra}, we have 

\begin{equation}
\label{eq:LEN0}
\begin{split}
E(\tilde{Q}+X)-E(\tilde{Q})&=\frac{1}{2}X^T\nabla^2E(\tilde{Q})X+C(X)\\&=\frac{1}{2}X^TJ^TJ\nabla^2E(\tilde{Q})X+C(X)\\&=\frac{1}{2}\Omega(X, AX)+C(X)\\&=-k\lambda_+\lambda_-+\frac{1}{2}\Omega(\gamma, A\gamma)+C(X),
\end{split}
\end{equation} note that $C(X)=O(|X|^3)$. We define the \emph{linearized energy norm} $|X|_E$ by 

\begin{equation}
|X|_E^2:=\frac{k}{2}(\lambda_+^2(t)+\lambda_-^2(t))+\frac{1}{2}\Omega(\gamma, A\gamma).
\end{equation} Then we get that 

\begin{equation}
\label{eq:LEN}
E(\tilde{Q}+X)-E(\tilde{Q})+\frac{k}{2}(\lambda_+(t)+\lambda_-(t))^2-|X|_E^2=O(|X|^3).
\end{equation}

\begin{lemma}
\label{lem:LEN}
The function $\gamma(t)$ in the decomposition satisfies \[|\gamma(t)|\lesssim |X(t)|,\] moreover, \[|X(t)|\sim|X(t)|_E.\]
\end{lemma}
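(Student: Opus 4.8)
The statement to prove is Lemma~\ref{lem:LEN}: that $|\gamma(t)| \lesssim |X(t)|$ and, consequently, $|X(t)| \sim |X(t)|_E$.

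\medskip

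The plan is to exploit the symplectic decomposition $X = \lambda_+\xi_+ + \lambda_-\xi_- + \gamma$ established in \eqref{eq:symdec}, together with the eigenvector normalizations in \eqref{eq:defxipm} and the symplectic basis $\xi_+,\eta_1,\xi_-,\eta_2$ constructed in the proof of Lemma~\ref{lem:gasim}. First I would observe that, since $P = (\xi_+\ \eta_1\ \xi_-\ \eta_2)$ is a fixed invertible matrix, the coordinate map $X \mapsto (\lambda_+,\lambda_-,a,b)$ (where $\gamma = a\eta_1 + b\eta_2$) is a linear isomorphism of $\mathbb{R}^4$; hence all the quantities $|\lambda_+|$, $|\lambda_-|$, $|\gamma|$, and $|a|,|b|$ are each bounded by a constant times $|X|$, and $|X|$ is bounded by a constant times their sum. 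The bound $|\gamma(t)| \lesssim |X(t)|$ is then immediate: $\gamma = X - \lambda_+\xi_+ - \lambda_-\xi_-$, and $|\lambda_\pm| = |\Omega(X,\xi_\mp)| \le |\xi_+|\,|X|$ by \eqref{eq:labound1}, so $|\gamma| \le |X| + 2|\xi_+|\,|X| \lesssim |X|$ since $|\xi_\pm|$ are fixed constants.

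\medskip

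For the equivalence $|X| \sim |X|_E$, I would argue in two directions. The inequality $|X|_E \lesssim |X|$ follows directly from the definition $|X|_E^2 = \frac{k}{2}(\lambda_+^2 + \lambda_-^2) + \frac12\Omega(\gamma,A\gamma)$ together with $|\lambda_\pm| \lesssim |X|$ and $\Omega(\gamma,A\gamma) \sim |\gamma|^2 \lesssim |X|^2$ from Lemma~\ref{lem:gasim} and the previous paragraph; $k > 0$ is a fixed constant. For the reverse inequality $|X| \lesssim |X|_E$, I would use that $|X|^2 \lesssim \lambda_+^2 + \lambda_-^2 + |\gamma|^2$ (from the isomorphism above, writing $X$ in the $\xi_\pm,\eta_1,\eta_2$ basis and using equivalence of norms on $\mathbb{R}^4$), and then bound each term: $\lambda_+^2 + \lambda_-^2 \le \frac{2}{k}|X|_E^2$ trivially, and $|\gamma|^2 \lesssim \Omega(\gamma,A\gamma) \le 2|X|_E^2$ again by Lemma~\ref{lem:gasim}. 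Combining gives $|X|^2 \lesssim |X|_E^2$.

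\medskip

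I do not expect a serious obstacle here --- this is essentially a linear-algebra / equivalence-of-norms argument in a fixed four-dimensional space, and the only mild subtlety is making sure the symplectic orthogonality conditions $\Omega(\gamma,\xi_\pm) = 0$ are genuinely used (they are what make $\xi_+,\eta_1,\xi_-,\eta_2$ a basis adapted to the splitting $\mathbb{R}^4 = E^u \oplus E^s \oplus E^c$, so that $P$ is invertible and the coordinate decomposition is well defined). One should also note that $k$ and $\omega$ are strictly positive constants depending only on $\alpha$, so all implied constants in $\lesssim$ and $\sim$ are uniform; there is no dependence on $t$ or on $X$ anywhere. Both claimed statements then follow.
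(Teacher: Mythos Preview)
Your proposal is correct and follows essentially the same approach as the paper, which simply cites the triangle inequality, Lemma~\ref{lem:gasim}, and \eqref{eq:labound1}; you have just spelled out the details that the paper leaves implicit. The only (harmless) slip is that $|\lambda_\pm\xi_\pm|\le |\xi_+|^2|X|$ rather than $|\xi_+||X|$, but since $|\xi_\pm|$ are fixed constants this does not affect the argument.
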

\begin{proof}By the triangular inequality, Lemma \ref{lem:gasim} and (\ref{eq:labound1}).\end{proof}

\subsection{Distance function from the ground state}
We introduce the distance function relative to the ground states $\pm\tilde{Q}$. Let \[\psi=\sigma(\tilde{Q}+X), \quad X=\lambda_+\xi_++\lambda_-\xi_-+\gamma,\] where $\lambda_\pm(t)$ and $\gamma(t)$ are functions appearing in the decomposition for $X(t)$ as in (\ref{eq:symdec}), and $\sigma=\pm$ with ``+'' corresponds to the ground state $\tilde{Q}$ and ``-'' corresponds to the ground state $-\tilde{Q}$, note that $E(\tilde{Q})=E(-\tilde{Q})$.

From Lemma \ref{lem:LEN} and (\ref{eq:LEN}) we see that there exists a constant $\delta_E>0$ with the following property: for any solution $\psi=\sigma(\tilde{Q}+X)$ to (\ref{eq:hlpsym}) and any time $t\in I_{max}(\psi)$ (i.e. the maximal interval of existence for $\psi$) for which $|X(t)|_E\leq4\delta_E$, 
\begin{equation}
\label{eq:DF1}
|E(\psi(t))-E(\tilde{Q})+\frac{k}{2}(\lambda_+(t)+\lambda_-(t))^2-|X(t)|_E^2|\leq \frac{|X(t)|_E^2}{10}.
\end{equation}

Let $\chi$ be a smooth function on $\R$ such that $\chi(r)=1$ for $|r|\leq 1$ and $\chi(r)=0$ for $|r|\geq2$. We define \[d_\sigma(\psi(t)):=\sqrt{|X(t)|_E^2+\chi(|X(t)|_E/2\delta_E)C(\psi(t))},\]
where \[C(\psi(t))=E(\psi(t))-E(\tilde{Q})+\frac{k}{2}(\lambda_+(t)+\lambda_-(t))^2-|X(t)|_E^2,\] as appeared in (\ref{eq:LEN0}).
Moreover, we will let \[d_Q(\psi(t)):=\min( d_{+}(\psi(t)), d_-(\psi(t)))=d_\sigma(\psi(t)), \] notice that $\sigma$ is unique as long as $d_Q(\psi)$ is less than half the distance of the two ground states.

 It will be convenient to introduce the new parameters \[\lambda_1(t):=\frac{\lambda_+(t)+\lambda_-(t)}{2}, \quad \lambda_2(t):=\frac{\lambda_+(t)-\lambda_-(t)}{2},\] and we have 

\EQ{\label{eq:lm1lm2}\frac{d\lambda_{1}}{dt}(t)=k\lambda_{2}(t)+\frac{1}{2}N_1(X), \quad\frac{d\lambda_{2}}{dt}(t)=k\lambda_{1}(t)-\frac{1}{2}N_2(X),} where

\EQ{N_1(X)=\Omega(N(X), \xi_++\xi_{-}),\quad N_2(X)=\Omega(N(X), \xi_+-\xi_{-}).} Note that \EQ{|N_1(X)|\lec |X|^2, \quad |N_2(X)|\lec |X|^2.}

Some important properties about the function $d_Q(\psi(t))$ are in the following:
\begin{lemma}
\label{lem:DF1}
Assume that there exists an interval $I$ on which \[\mathrm{sup}_{t\in I}d_Q(\psi(t))\leq\delta_E.\] Then, all of the following hold for all $t\in I$:
\begin{itemize}
\item[(i)] $\frac{1}{2}|X(t)|^2_E\leq d_Q(\psi(t))^2\leq \frac{3}{2}|X(t)|^2_E,$
\item[(ii)] $d_Q(\psi(t))^2=E(\psi(t))-E(\tilde{Q})+2k\lambda_1^2(t),$
\item[(iii)] $\frac{d}{dt}d_Q(\psi(t))^2=4k^2\lambda_1(t)\lambda_2(t)+2k\lambda_1(t)N_1(X)$.
\end{itemize}
\end{lemma}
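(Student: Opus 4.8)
The plan is to verify the three identities in Lemma~\ref{lem:DF1} by unpacking the definition of $d_Q(\psi(t))=d_\sigma(\psi(t))$ and exploiting the hypothesis $\sup_{t\in I}d_Q(\psi(t))\le\delta_E$, which will force the cutoff $\chi(|X(t)|_E/2\delta_E)$ to equal $1$ throughout $I$. First I would establish item (i). From the definition
\[
d_\sigma(\psi(t))^2=|X(t)|_E^2+\chi(|X(t)|_E/2\delta_E)\,C(\psi(t)),
\]
and the a priori bound (\ref{eq:DF1}), $|C(\psi(t))|\le|X(t)|_E^2/10$, provided $|X(t)|_E\le4\delta_E$. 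So I first need to bootstrap: the assumption $d_Q\le\delta_E$ together with the quadratic-type definition of $d_\sigma$ gives $|X(t)|_E^2\le d_Q^2+|C|\le\delta_E^2+|X|_E^2/10$, hence $|X(t)|_E^2\lesssim\delta_E^2$; after possibly shrinking $\delta_E$ this yields $|X(t)|_E\le2\delta_E$, so $\chi(|X(t)|_E/2\delta_E)=1$ and (\ref{eq:DF1}) applies. Then $d_Q^2=|X|_E^2+C$ with $|C|\le|X|_E^2/10$ gives immediately $\frac{9}{10}|X|_E^2\le d_Q^2\le\frac{11}{10}|X|_E^2$, which is stronger than the claimed $\frac12|X|_E^2\le d_Q^2\le\frac32|X|_E^2$.

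Next, for item (ii), once $\chi\equiv1$ on $I$, by the definition of $C(\psi(t))$ the two terms $|X(t)|_E^2$ cancel:
\[
d_Q(\psi(t))^2=|X|_E^2+\Big(E(\psi)-E(\tilde{Q})+\tfrac{k}{2}(\lambda_++\lambda_-)^2-|X|_E^2\Big)=E(\psi(t))-E(\tilde{Q})+\tfrac{k}{2}(\lambda_++\lambda_-)^2,
\]
and since $\lambda_1=(\lambda_++\lambda_-)/2$ we get $\frac{k}{2}(\lambda_++\lambda_-)^2=2k\lambda_1^2$, which is exactly (ii). For item (iii), I differentiate the clean expression from (ii): $E(\psi(t))$ is conserved along solutions of (\ref{eq:hlpsym}), so $\frac{d}{dt}(E(\psi)-E(\tilde Q))=0$, and $\frac{d}{dt}(2k\lambda_1^2)=4k\lambda_1\dot\lambda_1$. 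Substituting the evolution equation (\ref{eq:lm1lm2}), $\dot\lambda_1=k\lambda_2+\frac12N_1(X)$, yields $\frac{d}{dt}d_Q^2=4k\lambda_1(k\lambda_2+\frac12N_1(X))=4k^2\lambda_1\lambda_2+2k\lambda_1N_1(X)$, which is (iii).

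The main obstacle is the bootstrapping step needed to justify that the cutoff is inactive: one has to rule out a vicious circle where $d_Q$ is small only because a large $|X|_E$ is compensated by a large negative $C$. This requires choosing $\delta_E$ small enough (depending only on the $O(|X|^3)$ constant in (\ref{eq:LEN}) and on $\delta_E$ itself through a standard continuity/absorption argument) so that on the set $|X|_E\le4\delta_E$ the inequality $|C|\le|X|_E^2/10$ really does hold and the algebra closes. A secondary, purely bookkeeping point is the choice of which ground state $\sigma$ is active: as remarked after the definition of $d_Q$, $\sigma$ is unambiguous as long as $d_Q(\psi)$ is below half the distance between $\tilde Q$ and $-\tilde Q$, so after shrinking $\delta_E$ once more we may freely write $d_Q=d_\sigma$ with a fixed $\sigma$ on all of $I$ (using $E(\tilde Q)=E(-\tilde Q)$ and (\ref{eq:hesEpmQ}) so that the linearized quantities are the same at both ground states). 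Everything after that is the routine differentiation above.
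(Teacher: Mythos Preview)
Your proof is correct and follows the same route as the paper: use (\ref{eq:DF1}) for (i), the cancellation of $|X|_E^2$ once $\chi\equiv1$ for (ii), and differentiate (ii) via energy conservation and (\ref{eq:lm1lm2}) for (iii). The ``vicious circle'' you flag is actually not an obstacle: if $|X|_E>4\delta_E$ then $\chi=0$ so $d_Q=|X|_E>\delta_E$, and if $2\delta_E<|X|_E\le4\delta_E$ then $d_Q^2\ge|X|_E^2-|C|\ge\tfrac{9}{10}|X|_E^2>\delta_E^2$; hence $d_Q\le\delta_E$ already forces $|X|_E\le2\delta_E$ with no shrinking of $\delta_E$ needed.
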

\begin{proof}
The proof of (i) is directly from the definition of $d_Q(\psi)$ and (\ref{eq:DF1}). To prove (ii), notice that from (i) and $d_Q(\psi(t))\leq\delta_E$, we have $|X(t)|_E<2\delta_E$, thus 
\begin{equation}d_Q(\psi(t))^2=|X(t)|_E^2+C(\psi(t))=E(\psi(t))-E(\tilde{Q})+2k\lambda_1^2(t).\end{equation} To show (iii), we differentiate (ii) with respect to $t$ on both sides:
\begin{equation}
\begin{split}
\frac{d}{dt}d_Q(\psi(t))^2&=\nabla E(\psi(t))\cdot \dot{\psi}(t)+4k\lambda_1\dot{\lambda_1}\\&=\nabla E(\psi(t))\cdot J\nabla E(\psi(t))+4k\lambda_1\dot{\lambda_1}\\&=0+4k\lambda_1[k\lambda_{2}(t)+\frac{1}{2}N_1(X)]\\&=4k^2\lambda_1(t)\lambda_2(t)+2k\lambda_1(t)N_1(X).
\end{split}
\end{equation}
\end{proof}

Moreover, we can show that when the energy is at most slightly above the ground state energy, and the solution is near the ground state, then the distance function is dominated by $|\lambda_1|$.

\begin{lemma}[Eigenmode dominance]
\label{lem:ED}
For any $\psi\in \R^4$ such that \EQ{\label{eq:ED}E(\psi)<E^*+\frac{1}{2}d_Q(\psi)^2,\quad d_Q(\psi)\leq\delta_E,} one has $d_Q(\psi)\sim |\lambda_1|$. Moreover, $\lm_1$ has a fixed sign in each connected component of the region (\ref{eq:ED}).
\end{lemma}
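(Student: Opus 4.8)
The plan is to exploit Lemma \ref{lem:DF1}(ii), which under the hypothesis $d_Q(\psi)\le\delta_E$ gives the exact identity $d_Q(\psi)^2 = E(\psi)-E(\tilde Q)+2k\lambda_1^2$. Since $E(\tilde Q)=E^*$, the condition (\ref{eq:ED}) rewrites as $E(\psi)-E^* < \tfrac12 d_Q(\psi)^2$, hence $d_Q(\psi)^2 = (E(\psi)-E^*) + 2k\lambda_1^2 < \tfrac12 d_Q(\psi)^2 + 2k\lambda_1^2$, which immediately yields $\tfrac12 d_Q(\psi)^2 < 2k\lambda_1^2$, i.e. $d_Q(\psi) \lesssim |\lambda_1|$. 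The reverse inequality $|\lambda_1|\lesssim d_Q(\psi)$ is the easy direction: from (\ref{eq:labound1}) and Lemma \ref{lem:LEN} we have $\lambda_1^2 \le \lambda_+^2+\lambda_-^2 \lesssim |X|_E^2$, and by Lemma \ref{lem:DF1}(i), $|X|_E^2 \le 2 d_Q(\psi)^2$. Combining the two bounds gives $d_Q(\psi)\sim|\lambda_1|$.

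For the fixed-sign statement, I would argue by connectedness. Consider the region $\mathcal{R}$ defined by (\ref{eq:ED}), and suppose $\lambda_1$ vanished at some point $\psi_0$ in a connected component of $\mathcal{R}$. Then at $\psi_0$ we would have $d_Q(\psi_0)\sim|\lambda_1(\psi_0)| = 0$, so $d_Q(\psi_0)=0$, meaning $\psi_0 = \sigma\tilde Q$ is a ground state. But a ground state does not satisfy the strict inequality $E(\psi_0) < E^* + \tfrac12 d_Q(\psi_0)^2$, since both sides equal $E^*$; hence $\psi_0\notin\mathcal{R}$. Therefore $\lambda_1\neq 0$ throughout $\mathcal{R}$. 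Since $\lambda_1$ is a continuous real-valued function (it is a fixed linear functional $\pm\Omega(X,\xi_\mp)$ composed with the smooth map $\psi\mapsto X$, and the choice of $\sigma$ is locally constant wherever $d_Q<$ half the inter-ground-state distance, in particular on all of $\mathcal R$ after shrinking $\delta_E$), a nowhere-vanishing continuous function on a connected set has a fixed sign by the intermediate value theorem.

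I expect the only real subtlety to be bookkeeping about the choice of $\sigma$: $\lambda_1$ depends on which ground state $\pm\tilde Q$ we expand around, so one must first note that on the region (\ref{eq:ED}) with $\delta_E$ small enough, $d_Q(\psi) < $ half the distance between $\tilde Q$ and $-\tilde Q$, so $\sigma$ is uniquely determined and locally constant; then $\lambda_1 = \lambda_1^{(\sigma)}$ is a genuine continuous function on each connected component, and the argument above goes through verbatim. Everything else is a one-line manipulation of Lemma \ref{lem:DF1}(ii) together with the norm equivalences already established. No delicate estimate is needed; the content is entirely the algebraic identity for $d_Q^2$ and the observation that the strict inequality in (\ref{eq:ED}) excludes the ground states themselves, which is exactly what forces $\lambda_1$ away from zero.
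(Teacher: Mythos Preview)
Your proof is correct and follows essentially the same approach as the paper: both use the energy identity together with the hypothesis $E(\psi)-E^*<\tfrac12 d_Q(\psi)^2$ to bound $d_Q$ above by a multiple of $|\lambda_1|$, then invoke (\ref{eq:labound1}) and Lemma~\ref{lem:LEN} for the reverse inequality, and finally argue that $\lambda_1=0$ would force $\psi$ to be a ground state, contradicting the strict inequality in (\ref{eq:ED}). The only cosmetic difference is that you appeal directly to the exact identity of Lemma~\ref{lem:DF1}(ii), $d_Q(\psi)^2=E(\psi)-E^*+2k\lambda_1^2$, whereas the paper re-derives the needed inequality by going through $|X|_E^2$ and the error bound $|C(\psi)|\le\tfrac{1}{10}|X|_E^2$ from (\ref{eq:DF1}) together with Lemma~\ref{lem:DF1}(i); your route is a touch cleaner (yielding $d_Q^2<4k\lambda_1^2$ rather than $20k\lambda_1^2$) but the content is identical, since (ii) is itself an immediate consequence of that error bound.
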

\begin{proof}
We have \begin{equation}\begin{split}|X|_E^2&=E(\psi)-E^*+\frac{k}{2}(\lm_++\lm_-)^2-C(\psi)\\&\leq E(\psi)-E^*+2k\lm_1^2+\frac{|X|_E^2}{10},\end{split}\end{equation} thus \[\frac{9}{10}|X|_E^2\leq E(\psi)-E^*+2k\lm_1^2.\] Using Lemma \ref{lem:DF1} (i), we obtain 
\begin{equation}d_Q(\psi)^2\leq\frac{3}{2}|X|_E^2\leq\frac{5}{6}d_Q(\psi)^2+\frac{10}{3}k\lm_1^2,\end{equation} hence \begin{equation}d_Q(\psi)^2\leq 20k\lm_1^2.\end{equation}

On the other hand, \begin{equation}\lm_1^2\leq\frac{1}{2}(\lm_+^2+\lm_-^2)\leq|\xi_+|^2|X|^2\lesssim d_Q(\psi)^2,\end{equation} where the last inequality is from Lemma \ref{lem:LEN} and Lemma \ref{lem:DF1} (i). Finally, inside the set (\ref{eq:ED}) one can never have $\lm_1=0$, since that would mean both $d_Q(\psi)=0$ and $E(\psi)<E^*$ which is impossible. 
\end{proof}

\subsection{The ejection process}

\begin{lemma}[Ejection Lemma]
\label{lem:ejlem}
There exists constants $0<\delta_X\leq\delta_E$ and $C_*, T_*$ with the property: If $\psi(t)$ is a local solution to (\ref{eq:hlpsym}) on $[0, T]$ satisfying \begin{equation}\label{eq:ejlem1}R:=d_Q(\psi(0))\leq\delta_X, \quad E(\psi)<E^*+\frac{1}{2}R^2,\end{equation} then we can extend $\psi(t)$ as long as $d_Q(\psi(t))\leq\delta_X$. Furthermore, if there exists some $t_0\in (0, T)$ such that \begin{equation}\label{eq:ejcon}d_Q(\psi(t))\geq R,\quad \forall 0<t<t_0,\end{equation} and let \[T_X:=\inf\{t\in[0,t_0]: d_Q(\psi(t))=\delta_X\}\] where $T_X=t_0$ if $d_Q(\psi(t))<\delta_X$ on $[0,t_0]$, then for all $t\in[0,T_X]:$ 
\begin{itemize}
\item[(i)] $d_Q(\psi(t))\sim \mathfrak{s}\lm_1(t)\sim \mathfrak{s}\lm_+(t)\sim e^{kt}R$,
\item[(ii)] $|\lm_-(t)|+|\gamma(t)|\lesssim R+R^{\frac{3}{2}}$,
\item[(iii)]$\mathfrak{s}K(\psi(t))\gec d_Q(\psi(t))-C_*d_Q(\psi(0))$, and $\mathfrak{s}W(\psi(t))\gec d_Q(\psi(t))-C_*d_Q(\psi(0))$.
\end{itemize}
where $\mathfrak{s}=+1$ or $-1$. Moreover, $d_Q(\psi(t))$ is increasing for $t\in [T_*R, T_X]$ and $|d_Q(\psi(t))-R|\lec R^3$ for $t\in [0, T_*R]$.
\end{lemma}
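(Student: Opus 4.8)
The plan is to follow the ODE-systems structure of Nakanishi--Schlag's ejection lemma, specialized to the four-dimensional Hamiltonian setting, exploiting the hyperbolic pair $\pm k$ that governs $\lambda_\pm$. First I would set up the bookkeeping: as long as $d_Q(\psi(t))\le\delta_X$, Lemma~\ref{lem:DF1}(i) gives $|X|_E\sim d_Q(\psi)$ and Lemma~\ref{lem:LEN} gives $|X|\sim|X|_E$, so all nonlinear quantities $N_\pm(X)$, $N_1(X)$, $N_2(X)$, $C(X)$ are $O(|X|^2)=O(d_Q^2)$; this also justifies the extension claim, since $d_Q(\psi(t))\le\delta_X$ keeps $\psi$ in a compact region away from the collision singularity (Remark~\ref{rmk:away_sing}-type reasoning), so the solution cannot blow up while $d_Q\le\delta_X$. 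The hypothesis $E(\psi)<E^*+\tfrac12 R^2$ combined with conservation of energy and $d_Q(\psi(0))=R$ puts us in the region of Lemma~\ref{lem:ED} at $t=0$, hence $\lambda_1(0)$ has a definite sign, which I call $\mathfrak{s}$; and as long as we remain in that region (which I must verify propagates) Lemma~\ref{lem:ED} keeps giving $d_Q(\psi)\sim|\lambda_1|$.

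Next I would derive the differential inequalities. From \eqref{eq:lm1lm2}, $\dot\lambda_1=k\lambda_2+\tfrac12 N_1$ and $\dot\lambda_2=k\lambda_1-\tfrac12 N_2$, so the combination $u:=\lambda_+=\lambda_1+\lambda_2$ satisfies $\dot u = k u + N_+(X)$ with $|N_+|\lesssim d_Q^2\lesssim\delta_X\, d_Q\lesssim\delta_X |\lambda_1|$. Using Lemma~\ref{lem:DF1}(iii), $\tfrac{d}{dt}d_Q^2 = 4k^2\lambda_1\lambda_2 + 2k\lambda_1 N_1$; the first term is the main driver. A standard bootstrap/continuity argument then runs as follows: on the ejection interval where $d_Q(\psi(t))\ge R$ for $0<t<t_0$, I claim $\lambda_1$ keeps the sign $\mathfrak{s}$ and $\mathfrak{s}\lambda_2>0$ gets switched on quickly. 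Concretely, at $t=0$ the condition \eqref{eq:ejcon} forces $\tfrac{d}{dt}d_Q^2\big|_{0^+}\ge 0$ in an averaged sense, which — once the $O(d_Q^2)$ error is absorbed for $\delta_X$ small — forces $\mathfrak{s}\lambda_1(0)\lambda_2(0)\gtrsim 0$; then $\dot\lambda_2=k\lambda_1-\tfrac12 N_2$ has sign $\mathfrak{s}$, so $\mathfrak{s}\lambda_2$ increases, and after a time of order $T_*R$ one gets $\mathfrak{s}\lambda_2(t)\sim\mathfrak{s}\lambda_1(t)$, i.e. $\lambda_-=\lambda_1-\lambda_2$ has become small relative to $\lambda_+$. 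On $[0,T_*R]$ I control the drift of $d_Q$ directly: $\big|\tfrac{d}{dt}d_Q^2\big|\lesssim k^2|\lambda_1||\lambda_2| + \delta_X d_Q^2$ and $|\lambda_2|$ starts at size $\lesssim R$ but is not yet comparable, so integrating over a time $\lesssim R$ yields $|d_Q(\psi(t))^2-R^2|\lesssim R^3$, hence $|d_Q-R|\lesssim R^3$. For $t\in[T_*R,T_X]$, once $\mathfrak{s}\lambda_2\sim\mathfrak{s}\lambda_1$, Lemma~\ref{lem:DF1}(iii) gives $\tfrac{d}{dt}d_Q^2 \ge 4k^2\lambda_1\lambda_2 - C\delta_X d_Q^2 \gtrsim k^2 d_Q^2$, so $d_Q$ is increasing and in fact $d_Q(\psi(t))\sim e^{kt}R$ by Gr\"onwall (upper and lower); correspondingly from $\dot\lambda_+ = k\lambda_+ + O(\delta_X|\lambda_+|)$ one gets $\mathfrak{s}\lambda_+(t)\sim e^{kt}R$ and $d_Q\sim\mathfrak{s}\lambda_1\sim\mathfrak{s}\lambda_+$, which is (i). Feeding this exponential growth of $\lambda_+$ back into the equation $\dot\lambda_- = -k\lambda_- - N_-(X)$ with $|N_-|\lesssim d_Q^2\lesssim e^{2kt}R^2$, variation of constants gives $|\lambda_-(t)|\lesssim |\lambda_-(0)| + e^{-kt}\int_0^t e^{ks}e^{2ks}R^2\,ds\lesssim R + e^{2kt}R^2\lesssim R + R^{3/2}$ on the range where $d_Q\le\delta_X$ (so $e^{kt}R\lesssim\delta_X$, hence $e^{2kt}R^2\lesssim\delta_X\cdot e^{kt}R\lesssim R^{1/2}\cdot R$ after adjusting constants); the same variation-of-constants bound on the $E^c$ block, using $\dot\gamma=A\gamma+(\text{nonlinear})$ with $A|_{E^c}$ having purely imaginary spectrum so $|e^{tA}|_{E^c}|\lesssim 1$, gives $|\gamma(t)|\lesssim |\gamma(0)| + \int_0^t e^{2ks}R^2\,ds\lesssim R+R^{3/2}$, which is (ii).

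For (iii), the point is to relate $K$ and $W$ to $d_Q$. Since $K$ and $W$ are smooth near $\pm\tilde Q$ and vanish there ($K(\pm\tilde Q)=0$, $W(\pm\tilde Q)=0$), Taylor expansion gives $K(\psi)=\ell_K(X)+O(|X|^2)$ and $W(\psi)=\ell_W(X)+O(|X|^2)$ where $\ell_K,\ell_W$ are the linear parts, i.e. $\nabla K(\tilde Q)\cdot X$ and $\nabla W(\tilde Q)\cdot X$. The crucial computation — which I would do once and for all — is that the linear part of $K$ (and of $W$) along the hyperbolic direction is nonzero: evaluating $\nabla K(\tilde Q)\cdot\xi_\pm$ and $\nabla W(\tilde Q)\cdot\xi_\pm$ explicitly using \eqref{eq:Ksym}, the definition of $W$, and the eigenvector $\xi_+$ pictured in Figure~\ref{fig:hlpLinear}, shows $\nabla K(\tilde Q)\cdot\xi_+ = c_K \ne 0$ and $\nabla W(\tilde Q)\cdot\xi_+=c_W\ne 0$ with a definite sign; one also checks the $\xi_-$ and $E^c$ contributions. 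Then $\mathfrak{s}K(\psi(t)) = \mathfrak{s}c_K\lambda_+ + (\text{terms in }\lambda_-,\gamma) + O(d_Q^2)$, and using (i), (ii) and $|\lambda_-|+|\gamma|\lesssim R+R^{3/2}$, $O(d_Q^2)\lesssim\delta_X d_Q$, we get $\mathfrak{s}K(\psi(t))\ge c\,d_Q(\psi(t)) - C_*R$ after choosing the sign $\mathfrak{s}$ and $\delta_X$ appropriately — and likewise for $W$; the constant $C_*$ absorbs $|\lambda_-(0)|,|\gamma(0)|\lesssim R$ and the linear coefficients. The main obstacle, and the step I would be most careful with, is exactly this sign computation: one must verify that $\xi_+$ is not tangent to $\{K=0\}$ or to $\{W=0\}$ at $\tilde Q$ and that the signs of $c_K$ and $c_W$ agree (so the same $\mathfrak{s}$ works for both conclusions in (iii)); this is what makes the ejection ``push the solution into the $W<0$ or $W>0$ region'' in a controlled way, and it is the place where the strong-potential hypothesis $\alpha\ge2$ and the explicit eigenvector geometry really enter. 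The secondary technical nuisance is making the bootstrap on ``$\psi$ stays in the region \eqref{eq:ED}'' self-consistent: one needs $E(\psi)<E^*+\tfrac12 d_Q(\psi)^2$ to persist, which follows from (ii) of Lemma~\ref{lem:DF1} since $d_Q^2 = E(\psi)-E^*+2k\lambda_1^2 \ge E(\psi)-E^*$ forces $E(\psi)-E^*\le d_Q^2$ automatically, but one should double check the strict inequality is not lost, and handle the cutoff $\chi$ in the definition of $d_Q$ on the range $|X|_E\in[2\delta_E,4\delta_E]$ — here choosing $\delta_X$ small enough that $d_Q\le\delta_X$ keeps $|X|_E\le 2\delta_E$ makes $\chi\equiv1$ throughout and removes that complication.
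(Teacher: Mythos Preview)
Your proposal is correct in outline and follows essentially the same Nakanishi--Schlag strategy as the paper. Two technical differences are worth noting. First, for the $|\gamma(t)|$ bound in (ii), the paper does not use variation of constants on the center block; instead it introduces the projected nonlinear energy $E_{\{\xi_+,\xi_-\}}(t):=E(\tilde Q+\lambda_+\xi_++\lambda_-\xi_-)-E(\tilde Q)$, estimates $|\frac{d}{dt}E_{\{\xi_+,\xi_-\}}|$, and uses conservation of $E(\psi)$ together with the relation $E(\psi)-E(\tilde Q)-E_{\{\xi_+,\xi_-\}}\sim|\gamma|^2+O(|X|^3)+O(|\xi|^3)$ to read off $|\gamma|$; this energy trick gives a slightly sharper bound than direct Duhamel, though either suffices here. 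Second, for (iii) the paper observes the algebraic identities $\nabla K(\tilde Q)=\nabla^2E(\tilde Q)Q_0$ with $Q_0=(-q_0,0,0,q_0)$ and $\nabla W(\tilde Q)=-\nabla^2E(\tilde Q)\tilde Q$, so that $\nabla K(\tilde Q)\cdot X=\Omega(Q_0,AX)=k\lambda_+\Omega(Q_0,\xi_+)-k\lambda_-\Omega(Q_0,\xi_-)+\Omega(Q_0,A\gamma)$; this reduces your sign check to verifying $\Omega(Q_0,\xi_+)>0$ and $-\Omega(\tilde Q,\xi_+)>0$ for a single normalization of $\xi_+$, which is cleaner than computing $\nabla K(\tilde Q)\cdot\xi_\pm$ componentwise but ultimately equivalent.

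One misconception to correct: the strong-potential hypothesis $\alpha\geq 2$ is \emph{not} used anywhere in this lemma --- the ejection lemma holds for all $\alpha>0$ (the paper remarks this explicitly right after the proof). The restriction $\alpha\geq 2$ enters only later, through the variational estimate (Lemma~\ref{lem:var_DF}, resting on Proposition~\ref{prop:KWvar}) needed for the one-pass theorem; the eigenvector geometry in (iii) is not where that hypothesis bites.
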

\begin{proof}
First notice that when the solution is close to $\pm\tQ$ means it is away from the singular point, thus the solution extends as long as $d_Q(\psi(t))$ is small. Next, from the definition of $T_X$ and (\ref{eq:ejcon}) we have for any $t\in[0, T_X]$, \EQ{R\leq d_Q(\psi(t))\leq \delta_X<\delta_E,} thus from Lemma \ref{lem:DF1} and Lemma \ref{lem:ED} for any $t\in[0, T_X]$,  
\EQ{\label{eq:ej1}|\gamma(t)|\lec |X(t)|\sim |X(t)|_E\sim d_Q(\psi(t))\sim |\lm_1(t)|,}
\EQ{\label{eq:ej2}\frac{d}{dt}d_Q(\psi(t))^2=4k^2\lambda_1(t)\lambda_2(t)+2k\lambda_1(t)N_1(X).}
Hence for any $t\in[0, T_X]$,
\EQ{0<R\lec |\lm_1(t)|,} which together with the continuity of $\lm_1(t)$ shows that for any $t\in[0, T_X]$ \EQ{\sk:=\mr{sign}[\lm_1(t)]=\mr{sign}[\lm_1(0)].} The ejection condition (\ref{eq:ejcon}) implies $\frac{d}{dt}d_Q(\psi(t))^2\vert_{t=0}\geq0$. Since $|N_1(X)|\lec|X|^2\lec\lm_1^2$, we deduce from (\ref{eq:ej2}) that $\sk\lm_2(0)\gec-|\lm_1(0)|^2$ and so $\lm_+(0)\sim\lm_1(0)\sim\sk R$.

Integrating the equation for $\lm_{\pm}$ yields 
\EQ{|\lm_{\pm}(t)-e^{\pm kt}\lm_{\pm}(0)|\lec \int_0^t e^{k(t-s)}|N_{\pm}(X(s))|ds \lec \int_0^t e^{k(t-s)}|\lm_1(s)|^2ds,} thus by continuity in time we deduce that as long as $Re^{kt}\ll 1$, we have 
\EQ{\label{eq:ej3}\lm_1(t)\sim \lm_+(t)\sim\sk Re^{kt}, \quad |\lm_{\pm}(t)-e^{\pm kt}\lm_{\pm}(0)|\lec R^2e^{2kt}.} Thus completes the proof of (i) and that $|\lm_-(t)|\lec R+R^2e^{2kt}$. Now to estimate $|\gamma(t)|$, we introduce the nonlinear energy projected to $\{\xi_+, \xi_-\}$ plane (cf. (\ref{eq:LEN0})): 
\EQ{E_{\{\xi_+,\xi_-\}}(t):=E(\tQ+\lm_+\xi_++\lm_-\xi_-)-E(\tQ)=-k\lm_+\lm_-+C(\lm_+\xi_++\lm_-\xi_-).} Denote $\xi(t):=\lm_+(t)\xi_++\lm_-(t)\xi_-$, we get 
\EQ{|\frac{d}{dt}E_{\{\xi_+,\xi_-\}}(t)|&=|\nabla E(\tQ+\xi)\cdot \dot{\xi}|\\&=|(\nabla^2E(\tQ)\xi+O(|\xi|^2))\cdot\dot{\xi}|\\&\lec |\nabla^2E(\tQ)\xi\cdot\dot{\xi}|+|\xi|^2|\dot{\xi}|\\&=|\Omega(\dot{\xi},A\xi)|+|\xi|^2|\dot{\xi}|\\&=k|\dot{\lm}_+\lm_-+\lm_+\dot{\lm}_-|+|\xi|^2|\dot{\xi}|\\&=k|\lm_-N_+(X)+\lm_+N_-(X)|+|\xi|^2|\dot{\xi}|\\&\lec (|\lm_-|+|\lm_+|)^3+(|\lm_-|+|\lm_+|)^2(|\dot{\lm}_+|+|\dot{\lm}_-|)\\&\lec(|\lm_-|+|\lm_+|)^3+(|\lm_-|+|\lm_+|)^2|\gamma(t)|^2. } Hence 
\EQ{\label{eq:ej4}|\frac{d}{dt}(E(\psi(t))-E_{\{\xi_+,\xi_-\}}(t))|\lec (|\lm_-|+|\lm_+|)^3+(|\lm_-|+|\lm_+|)^2|\gamma(t)|^2.} Moreover,
\EQ{\label{eq:ej5}E(\psi(t))-E(\tQ)-E_{\{\xi_+,\xi_-\}}(t)&=\frac{1}{2}\Omega(\gamma, A\gamma)+O(|X|^3)+O(|\xi|^3)\\&\sim |\gamma|^2+O(|X|^3)+O(|\xi|^3),} thus by  (\ref{eq:ej5}) (\ref{eq:ej4}) and (\ref{eq:ej3}) we get the desired estimate in (ii) for $|\gamma|$. The equation of $\lm_2$ implies $\sk\lm_2(t)\gec R(e^{kt}-1)-R^2$, hence there is $T_*\sim 1$ such that $\sk\lm_2\gec R$ and $\frac{d}{dt}d_Q(\psi)>0$ for $t\geq T_*R$. For $t\in [0,T_*R]$, we have $|\frac{d}{dt}d_Q(\psi)|\lec R^2$ and so $|d_Q(\psi)-R|\lec R^3$.

Finally, we expand $K$ around $\tQ$: 
\EQ{K(\tQ+X)=\nabla K(\tQ)\cdot X+O(|X|^2).} Let $Q_0:=(-q_0, 0, 0, q_0)$ then from (\ref{eq:Ksym}) and (\ref{eq:hesE}) one can check that \EQ{\nabla K(\tQ) = ((\a^2+4\a+2)q_0, 0, 0, 2q_0)=\nabla^2E(\tQ)Q_0.} Thus 
\EQ{\label{eq:ej6}K(\tQ+X)&=\nabla^2E(\tQ)Q_0\cdot X+O(|X|^2)\\&=\Omega(Q_0, AX)+O(|X|^2)\\&=k\lm_+\Omega(Q_0, \xi_+)-k\lm_-\Omega(Q_0,\xi_-)+\Omega(Q_0, A\gamma)+O(|X|^2).}
We can choose the normalized eigenvector $\xi_+, \xi_-$ (cf. (\ref{eq:defxipm})) so that $\Omega(Q_0, \xi_+)>0$, then we obtain $\sk K(\psi(t))\gec (e^{kt}-C_*)R$ from (\ref{eq:ej6}) (\ref{eq:ej3}) and the estimates on $|\lm_-(t)|+|\gamma(t)|$ in (ii) as we have proved. The proof for $W$ is similar, and this time notice that 
\EQ{\label{eq:ej7}\nabla W(\tQ)=((\a+2)^2q_0, 0, 0, 0)=-\nabla^2E(\tQ)\tQ,}
and one can check that the previous chosen $\xi_+$ will satisfy $-\Omega(\tQ,\xi_+)>0$. 
\end{proof}

\begin{remark}\label{rmk:ejcon}Note that for the ejection condition (\ref{eq:ejcon}) to be satisfied, one can consider for instance $\lm_-(0)=0, \gamma(0)=0, \lm_+(0)=R_0$, then differential equation of $(\lm_+, \lm_-,\gamma)$ shows that $\lm_+(t)$ grows exponentially.\end{remark}

\begin{remark}\label{rmk:ejback}Note that the ejection Lemma also works backward in time, this time it's the stable manifold, i.e. the eigenvalue $-k$ dominates. Since $\lm_1=\frac{\lm_++\lm_-}{2}$, we get the same statements as in the lemma for backward time.
\end{remark}

\begin{remark}
The ejection lemma holds for all $\alpha>0$. However, to show the one-pass theorem we need some crucial variational estimates about the sign of $K$, and that estimate only works for $\alpha\geq 2$. 
\end{remark}


\subsection{The variational estimates}
Let $$d_L(\Gamma):=\min\{ \mathrm{dist}((x,y), L_1), \mathrm{dist}((x,y), L_2)\}$$ be the distance in the configuration space with respect to the Lagrange points. The following lemma implies that the sign of $W, K$ cannot change outside of a neighborhood of $L_1, L_2$ that is not too small; the size here depends on the amount by which the energy of the solution exceeds that of $L_1, L_2$.

\begin{figure}[h]

\centering
  \includegraphics[width=0.5\textwidth]{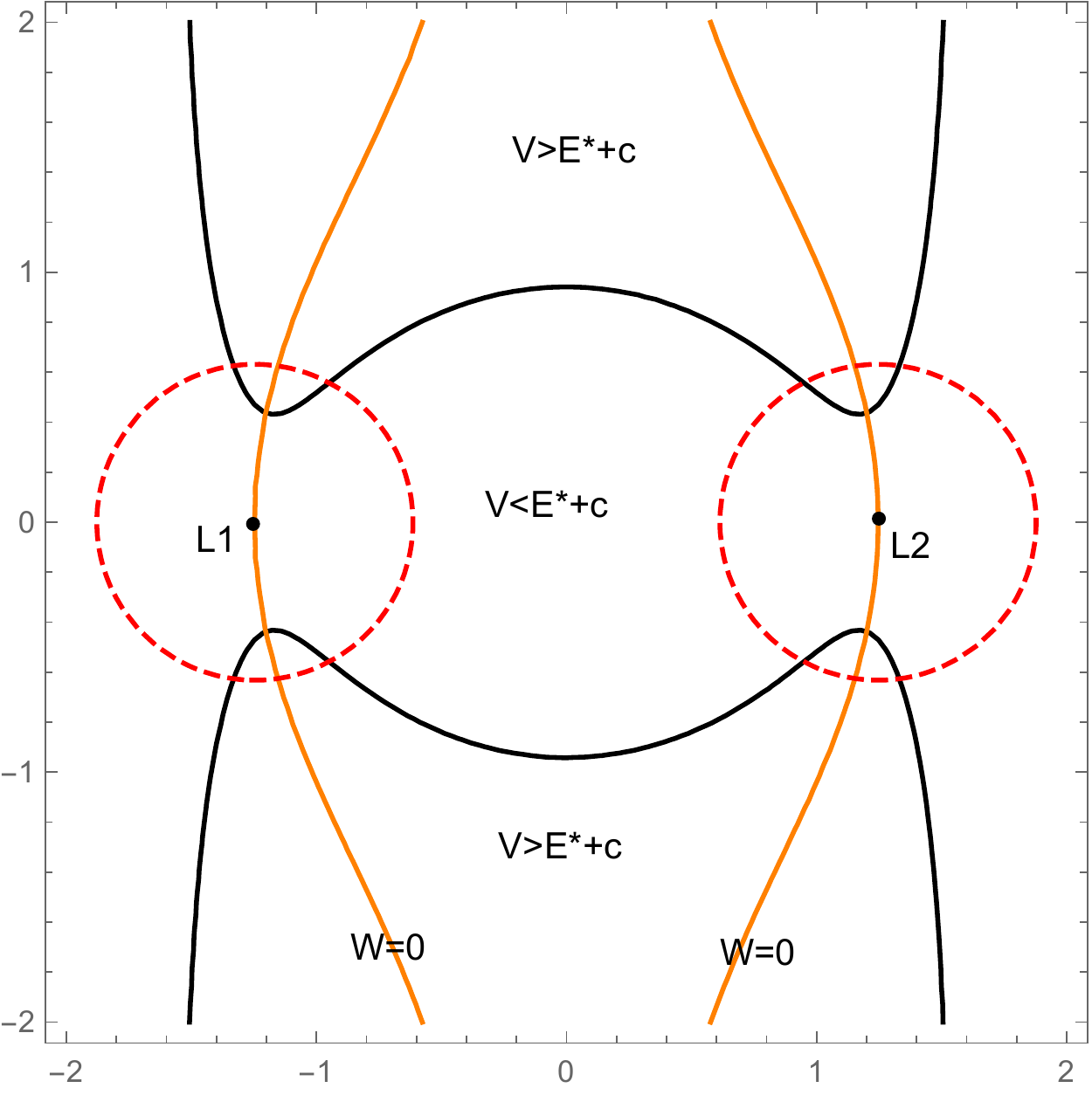}
  \caption{The black curve is the zero velocity curve for $E(\Gamma)=E^*+c$, i.e. $V(x,y)=E^*+c$. When $c=\epsilon(\delta)^2$ is small enough, the value of $|W|$ is  uniformly away from 0, provided $d_L(\Gamma)>\delta$, i.e. outside the red dashed circles.}
  \label{fig:EWabove}
\end{figure}

\begin{lemma}
\label{lem:var_phy}
For the strong force $\alpha\geq 2$, for any $\delta>0$, there exist $\epsilon(\delta), \kappa(\delta)>0$ such that for any $\Gamma\in \R^4$ satisfying \begin{equation}\label{eq:var_phy}E(\Gamma)<E^*+\epsilon(\delta)^2, \quad d_L(\Gamma)\geq\delta,\end{equation}one has either \[W(\Gamma)\leq-\kappa(\delta)\quad \textrm{and}\quad K(\Gamma)\leq-\kappa(\delta),\] or \[W(\Gamma)\geq\kappa(\delta).\]
\end{lemma}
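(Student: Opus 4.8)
The plan is to argue by contradiction and compactness, feeding the limiting point into the variational facts already established: Lemma~\ref{lem:ground} (the only minimizers of $E$ on $\{W=0\}$ are $\pm Q$) and Proposition~\ref{prop:KWvar} (on $\{K\ge 0,\ W\le 0\}$ one has $E\ge E^*$, with equality only at $\pm Q$, as its Lagrange-multiplier proof shows). Suppose the conclusion fails for some fixed $\delta_0>0$: then for each integer $n\ge 1$ there is $\Gamma_n=(x_n,y_n,\dot x_n,\dot y_n)\in\R^4$ with $E(\Gamma_n)<E^*+n^{-2}$, $d_L(\Gamma_n)\ge\delta_0$, for which the alternative fails with $\kappa=n^{-1}$. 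Negating the disjunction, $W(\Gamma_n)<n^{-1}$ and in addition $W(\Gamma_n)>-n^{-1}$ or $K(\Gamma_n)>-n^{-1}$. Passing to a subsequence we land in one of two cases: (a) $|W(\Gamma_n)|<n^{-1}$ for all $n$; or (b) $W(\Gamma_n)\le-n^{-1}$ and $K(\Gamma_n)>-n^{-1}$ for all $n$.

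The first step is compactness: I claim that along a further subsequence $\Gamma_n\to\Gamma_\infty$ with $r_\infty\in(0,\infty)$. The sequence $r_n$ stays bounded, for if $r_n\to\infty$ then $W(\Gamma_n)<n^{-1}$ forces $x_n^2\le\alpha r_n^{-\alpha}+o(1)\to 0$, hence $V(\Gamma_n)\to 0$ and $\liminf E(\Gamma_n)\ge\liminf V(\Gamma_n)=0$, contradicting $\limsup E(\Gamma_n)\le E^*<0$. To see that $r_n$ is bounded away from $0$: in case (a) this is immediate, since $r_n\to 0$ would give $W(\Gamma_n)=(\alpha+2)(x_n^2-\alpha r_n^{-\alpha})\to-\infty$, against $|W(\Gamma_n)|\to 0$. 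In case (b) I use the algebraic identity
\[
K=2E+2(\alpha+2)x^2+2(x\dot y-\dot x y)+(\alpha+2)(2-\alpha)r^{-\alpha},
\]
together with $\dot x^2+\dot y^2=2E-2V\le 2E+(\alpha+2)r^2+2(\alpha+2)r^{-\alpha}$ and the Coriolis bound $|2(x\dot y-\dot x y)|\le 2r\sqrt{\dot x^2+\dot y^2}$: if $r_n\to 0$ then, for $\alpha>2$, the term $(\alpha+2)(2-\alpha)r_n^{-\alpha}\to-\infty$ dominates (one checks $r_n^{1-\alpha/2}=o(r_n^{-\alpha})$), so $K(\Gamma_n)\to-\infty$; and for $\alpha=2$ the singular contribution cancels, $|2(x_n\dot y_n-\dot x_n y_n)|\le 2\sqrt{8+2E(\Gamma_n)r_n^2+4r_n^4}\to 4\sqrt2$, and using $E^*=-4\sqrt2$ one gets $K(\Gamma_n)\le 2E(\Gamma_n)+4\sqrt2+o(1)<-4\sqrt2+o(1)<0$. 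Either way this contradicts $K(\Gamma_n)>-n^{-1}\to 0$. Once $r_n$ lies in a compact subset of $(0,\infty)$, $V(\Gamma_n)$ is bounded, so $\dot x_n^2+\dot y_n^2=2(E(\Gamma_n)-V(\Gamma_n))$ is bounded, and a subsequence of $\Gamma_n$ converges to a regular point $\Gamma_\infty$.

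Now I pass to the limit: by continuity $E(\Gamma_\infty)\le E^*$ and $d_L(\Gamma_\infty)\ge\delta_0>0$. In case (a), $W(\Gamma_\infty)=0$, so Lemma~\ref{lem:ground} gives $E(\Gamma_\infty)=E^*$ and $\Gamma_\infty\in\{\pm Q\}$; but $\pm Q$ project onto $L_1,L_2$, so $d_L(\Gamma_\infty)=0$, a contradiction. In case (b), $K(\Gamma_\infty)\ge 0$ and $W(\Gamma_\infty)\le 0$, so Proposition~\ref{prop:KWvar} gives $E(\Gamma_\infty)\ge E^*$, whence $E(\Gamma_\infty)=E^*$; then $\Gamma_\infty$ attains $\inf\{E\mid K\ge 0,\ W\le 0\}$, which by the Lagrange-multiplier analysis in the proof of Proposition~\ref{prop:KWvar} occurs only at $\pm Q$, again contradicting $d_L(\Gamma_\infty)>0$. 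This contradiction proves the lemma, with $\epsilon(\delta),\kappa(\delta)$ furnished by the argument.

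The step I expect to be the main obstacle is the compactness near the origin in the borderline case $\alpha=2$: there the dominant singular term $-\alpha(\alpha+2)r^{-\alpha}$ of $K$ exactly cancels the one inherited from $\dot x^2+\dot y^2=2E-2V$, so one cannot conclude $K\to-\infty$ cheaply and must retain the Coriolis term $2(x\dot y-\dot x y)$ and invoke the precise value $E^*=-4\sqrt2$ to see that $K$ is still pushed strictly below $0$; this is exactly why the lemma, like the earlier away-from-singularity estimates, treats $\alpha>2$ and $\alpha=2$ separately. A secondary, purely bookkeeping, point of care is correctly negating the disjunction in the conclusion to obtain the split into cases (a) and (b), noting in particular that the ``positive'' alternative imposes no constraint on $K$, in agreement with Proposition~\ref{prop:KWvar}.
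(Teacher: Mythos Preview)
Your proof is correct and follows essentially the same strategy as the paper's: contradiction, compactness of a minimizing-type sequence, and appeal to the variational characterizations (Lemma~\ref{lem:ground} and Proposition~\ref{prop:KWvar}) to force the limit to be $\pm Q$, contradicting $d_L\ge\delta_0$. The only differences are presentational: the paper disposes of your case~(a) (the bound $|W|\ge\kappa$) by a direct geometric observation (Figure~\ref{fig:EWabove}) rather than a sequential argument, and for compactness near the origin it simply cites Lemmas~\ref{lem:away_sing},~\ref{lem:away_sing2} and Remark~\ref{rmk:away_sing} instead of re-deriving the $\alpha=2$ borderline estimate via the explicit identity for $K$ and the value $E^*=-4\sqrt2$ as you do.
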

\begin{proof}
First, notice that there exist $\epsilon(\delta), \kappa(\delta)>0$ so that if (\ref{eq:var_phy}) is satisfied then $|W(\Gamma)|\geq \kappa(\delta)$, see figure \ref{fig:EWabove}. We prove the first case by contradiction. Suppose there is $\delta_0>0$ and a sequence $\Gamma_n$ with \[d_L(\Gamma_n)\geq\delta_0,\quad E(\Gamma_n)<E^*+\frac{1}{n},\quad K(\Gamma_n)>-\frac{1}{n}\] and there exists $N_0, \kappa_0>0$ so that $W(\Gamma_n)\leq-\kappa_0$ for all $n\geq N_0$. By lemma \ref{lem:away_sing}, lemma \ref{lem:away_sing2} and remark \ref{rmk:away_sing}, we know there is a regular point $\Gamma_\infty$ so that $\Gamma_n\to\Gamma_\infty$. Thus \[ d_L(\Gamma_\infty)\geq\delta_0, \quad E(\Gamma_\infty)\leq E^*,\quad K(\Gamma_\infty)\geq0,\quad W(\Gamma_\infty)\leq-\kappa_0,\]which is impossible since $\inf\{E|W\leq0, K\geq0\}=E^*$ and is only achieved by $\pm Q$. 
\end{proof}
The variational estimate in Lemma \ref{lem:var_phy} uses the distance function in the configuration space $(x,y)$. We also have the variational estimate if we use the distance function $d_Q(\psi(t))$ in the phase space $(q,p)$.

\begin{lemma}
\label{lem:var_DF}
For the strong force $\alpha\geq 2$, for any $\delta>0$, there exist $\epsilon(\delta), \kappa(\delta)>0$ such that for any $\psi\in \R^4$ satisfying \begin{equation}\label{eq:above1}E(\psi)<E^*+\epsilon(\delta)^2, \quad d_Q(\psi)\geq\delta,\end{equation}one has either \[W(\psi)\leq-\kappa(\delta)\quad \textrm{and}\quad K(\psi)\leq-\kappa(\delta),\] or \[W(\psi)\geq\kappa(\delta).\]
\end{lemma}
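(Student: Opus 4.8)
The plan is to \emph{reduce this phase‑space estimate to its configuration‑space counterpart}, Lemma \ref{lem:var_phy}, which is already proved. Indeed, in the proof of Lemma \ref{lem:var_phy} the hypothesis $d_L(\Gamma)\ge\delta$ is used only to keep $\psi$ a definite distance from the Lagrange points (so that $|W|$ is bounded below and the minimizing sequence is precompact); once we know that the phase‑space hypothesis $d_Q(\psi)\ge\delta$ \emph{together with near‑critical energy} forces the same configuration‑space separation, the rest goes through unchanged. So the heart of the matter is the implication
\[
d_Q(\psi)\ge\delta \quad\text{and}\quad E(\psi)<E^*+\epsilon_0^2 \quad\Longrightarrow\quad d_L(\psi)\ge\delta'
\]
for suitable $\delta'=\delta'(\delta)>0$ and $\epsilon_0=\epsilon_0(\delta)>0$.

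I would prove this implication by contradiction. Suppose there is $\delta>0$ and a sequence $\psi_n$ with $E(\psi_n)<E^*+\tfrac1n$, $d_Q(\psi_n)\ge\delta$, but $d_L(\psi_n)\to0$. Passing to a subsequence, $(x_n,y_n)\to L_i$ for one of the two Lagrange points, say $L_2=(q_0,0)$; then, by continuity of $V$ away from the origin, $V(x_n,y_n)\to V(L_2)=E^*$. Since the kinetic energy is nonnegative, $E(\psi_n)\ge V(x_n,y_n)$, and combined with $E(\psi_n)<E^*+\tfrac1n$ this gives $E(\psi_n)\to E^*$ and hence $\tfrac12(\dot x_n^2+\dot y_n^2)=E(\psi_n)-V(x_n,y_n)\to0$. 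Therefore $(q_n,p_n)\to\tQ$ in $\R^4$. Writing $X_n=\psi_n-\tQ$, we get $|X_n|_E\to0$ and, since $E(\tQ)=E^*$ and $\lambda_\pm$ are linear in $X_n$, $C(\psi_n)=E(\psi_n)-E(\tQ)+\tfrac k2(\lambda_+(X_n)+\lambda_-(X_n))^2-|X_n|_E^2\to0$; hence
\[
d_Q(\psi_n)^2\le |X_n|_E^2+\chi\!\big(|X_n|_E/2\delta_E\big)\,C(\psi_n)\longrightarrow 0,
\]
contradicting $d_Q(\psi_n)\ge\delta>0$. (If instead $(x_n,y_n)\to L_1$, the same computation gives $(q_n,p_n)\to-\tQ$ and $d_-(\psi_n)\to0$.) This establishes the displayed implication with some $\delta'(\delta),\epsilon_0(\delta)>0$.

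To conclude, apply Lemma \ref{lem:var_phy} with the value $\delta'$ in place of $\delta$, obtaining $\epsilon_1:=\epsilon(\delta')>0$ and $\kappa_1:=\kappa(\delta')>0$. Set $\epsilon(\delta):=\min(\epsilon_0(\delta),\epsilon_1)$ and $\kappa(\delta):=\kappa_1$. If $E(\psi)<E^*+\epsilon(\delta)^2$ and $d_Q(\psi)\ge\delta$, then by Step 1, $d_L(\psi)\ge\delta'$, and since also $E(\psi)<E^*+\epsilon_1^2$, Lemma \ref{lem:var_phy} yields the dichotomy. The only real content is Step 1, i.e.\ controlling the non‑compactness of $\R^4$; here it is tamed cheaply by the remark that proximity to a Lagrange point in configuration space plus near‑critical energy forces the velocity to vanish in the limit.

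Alternatively, one can avoid Lemma \ref{lem:var_phy} and repeat its proof directly: negating the conclusion produces $\psi_n$ with $E(\psi_n)<E^*+\tfrac1n$, $d_Q(\psi_n)\ge\delta$, $W(\psi_n)<\tfrac1n$, and $W(\psi_n)>-\tfrac1n$ or $K(\psi_n)>-\tfrac1n$. Along a subsequence either $W(\psi_n)\to0$ — which, using $E(\psi_n)$ bounded above, forces $r_n$ bounded and bounded away from $0$, so $\psi_n\to\psi_\infty$ with $W(\psi_\infty)=0$, $E(\psi_\infty)\le E^*$, hence $\psi_\infty=\pm Q$ by Lemma \ref{lem:ground} — or $W(\psi_n)\le-c_0<0$ and then necessarily $K(\psi_n)>-\tfrac1n$, so by Lemmas \ref{lem:away_sing}, \ref{lem:away_sing2} and Remark \ref{rmk:away_sing} one has $r_n\ge c$, while $W(\psi_n)\le-c_0$ rules out $r_n\to\infty$, giving $\psi_n\to\psi_\infty$ with $K(\psi_\infty)\ge0$, $W(\psi_\infty)\le0$, $E(\psi_\infty)\le E^*$, hence $\psi_\infty=\pm Q$ by Proposition \ref{prop:KWvar}. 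In every case $d_Q(\psi_\infty)=0$ contradicts $d_Q(\psi_n)\ge\delta$.
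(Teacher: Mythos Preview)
Your proposal is correct and matches the paper's own argument. The paper's proof is a single sentence: ``Use the fact that $d_Q(\psi(t))\sim |X|=\min(|\psi(t)-\tQ|, |\psi(t)+\tQ|)$ and an obvious modification of the proof as in Lemma \ref{lem:var_phy}''; your alternative paragraph is precisely that modification spelled out, and your first approach (reducing $d_Q\ge\delta$ to $d_L\ge\delta'$ via the energy constraint forcing the velocity to vanish) is just a slightly more explicit way of packaging the same compactness--contradiction idea.
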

\begin{proof}
Use the fact that $d_Q(\psi(t))\sim |X|=\min(|\psi(t)-\tQ|, |\psi(t)+\tQ|)$ and an obvious modification of the proof as in Lemma \ref{lem:var_phy}. 
\end{proof}

We remark that in the above lemma, when $W>0$, the sign of $K$ can be both positive and negative with the condition (\ref{eq:above1}). Nonetheless, Lemma \ref{lem:var_DF} and the Ejection Lemma \ref{lem:ejlem} still enable us to define a sign function away from $\pm\tQ$ by combining those of $\lm_1$ and $W$.

\begin{lemma}[The sign function]
\label{lem:signfun}
For the strong force $\alpha\geq2$, let $\delta_S:=\frac{\delta_X}{2C_*}>0$ where $\delta_X$ and $C_*>1$ are constants from Lemma \ref{lem:ejlem}. Let $0<\delta\leq\delta_S$ and 
\EQ{\cH_{(\delta)}:=\{\psi\in\R^4 | E(\psi)<E^*+\min(d_Q^2(\psi)/2, \epsilon(\delta)^2)\},} where $\epsilon(\delta)$ is given by Lemma \ref{lem:var_DF}. Then there exists a unique continuous function $\kS : \cH_{(\delta)}\to \{\pm1\}$ satisfying 
\EQ{\begin{cases}\psi\in\cH_{(\delta)}, d_Q(\psi)\leq\delta_E &\Longrightarrow\quad \kS(\psi)=\mathrm{sign}[\lm_1],\\\psi\in\cH_{(\delta)}, d_Q(\psi)\geq\delta &\Longrightarrow\quad \kS(\psi)=\mathrm{sign}[W(\psi)],\end{cases}}where we set $\mathrm{sign}[0]=+1$.
\end{lemma}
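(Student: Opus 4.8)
The plan is to take the two displayed formulas as the \emph{definition} of $\kS$ and then verify that they fit together. Since $0<\delta\leq\delta_S<\delta_X\leq\delta_E$, the relatively open sets $U_1:=\cH_{(\delta)}\cap\{d_Q<\delta_E\}$ and $U_2:=\cH_{(\delta)}\cap\{d_Q>\delta\}$ cover $\cH_{(\delta)}$; on $U_1$ set $\kS:=\mathrm{sign}[\lm_1]$, on $U_2$ set $\kS:=\mathrm{sign}[W]$. Uniqueness is then automatic, since any function with the two stated properties must coincide with these formulas, which between them determine $\kS$ on all of $\cH_{(\delta)}$. So everything reduces to \textbf{(a)} each formula being well defined and continuous on its set, and \textbf{(b)} the two agreeing on the closed overlap $O:=\cH_{(\delta)}\cap\{\delta\leq d_Q\leq\delta_E\}$. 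Granting (a) and (b), the gluing lemma yields a well-defined continuous $\kS\colon\cH_{(\delta)}\to\{\pm1\}$, and the two required implications — including their borderline cases $d_Q=\delta$ and $d_Q=\delta_E$ — follow from (b). Step (a) is short: if $\psi\in\cH_{(\delta)}$ and $d_Q(\psi)\leq\delta_E$ then $E(\psi)<E^*+\tfrac12 d_Q(\psi)^2$, so Lemma \ref{lem:ED} gives $d_Q(\psi)\sim|\lm_1|$ and hence $\lm_1\neq0$ on $U_1$, while the splitting (\ref{eq:symdec}) depends continuously on $\psi$ as long as $d_Q(\psi)$ is below half the distance between $\pm\tQ$; and if $\psi\in\cH_{(\delta)}$ and $d_Q(\psi)\geq\delta$ then $E(\psi)<E^*+\epsilon(\delta)^2$, so Lemma \ref{lem:var_DF} gives $|W(\psi)|\geq\kappa(\delta)>0$ on $U_2$. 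In both cases $\mathrm{sign}[\cdot]$ of a continuous nonvanishing function is continuous.

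For (b), since $\mathrm{sign}[\lm_1]$ and $\mathrm{sign}[W]$ are locally constant on $O$, it suffices to verify $\mathrm{sign}[\lm_1]=\mathrm{sign}[W]$ at one point of each connected component of $O$. Because $d_Q\leq\delta_E$ on $O$ with $\delta_E$ small, $O$ lies in small balls about $\tQ$ and $-\tQ$, and using $\nabla^2E(\tQ)=\nabla^2E(-\tQ)$ (cf.\ (\ref{eq:hesEpmQ})) the two cases are identical. Writing $\psi=\sigma(\tQ+X)$ with $X=\lm_+\xi_++\lm_-\xi_-+\gamma$, Lemma \ref{lem:DF1}(ii) combined with the expansion (\ref{eq:LEN0}) shows that the defining inequality $E(\psi)-E^*<\tfrac12 d_Q(\psi)^2$ is equivalent, modulo cubic corrections, to $k\lm_2^2+\tfrac12\Omega(\gamma,A\gamma)<3k\lm_1^2$, i.e. (using $\Omega(\gamma,A\gamma)\sim|\gamma|^2\geq0$, Lemma \ref{lem:gasim}) to $X$ lying in a solid cone about the $\lm_1$-axis of aperture bounded away from $0$. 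Intersecting with $\{E(\psi)<E^*+\epsilon(\delta)^2\}$ only shaves the outer part of this cone without disconnecting it on either side of $\{\lm_1=0\}$, and intersecting further with $\{\delta\leq d_Q\leq\delta_E\}$ leaves, for $\delta_E$ small, exactly two connected components of $O$ near $\tQ$, distinguished by $\mathrm{sign}[\lm_1]$; each reaches $\{d_Q=\delta\}$ (here $\delta\leq\delta_S<\delta_E$ is used) and contains one branch of the one-dimensional unstable manifold $W^u(\tQ)$, since that manifold is tangent to $\xi_+$ at $\tQ$ and hence runs inside the cone. The same holds near $-\tQ$.

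Hence (b) reduces to producing, for each ground state $\sigma\tQ$ and each sign $\mathfrak{s}=\pm1$, one point where the two signs coincide. Take $\psi(0)$ on the branch of $W^u(\sigma\tQ)$ with $\mathrm{sign}[\lm_1]=\mathfrak{s}$ and $d_Q(\psi(0))=\delta$. Its forward orbit stays on $W^u(\sigma\tQ)$, so has energy exactly $E^*$ and remains in $\cH_{(\delta)}$ while $d_Q\leq\delta_E$, and since it leaves $\sigma\tQ$ along the unstable direction it satisfies the ejection condition (\ref{eq:ejcon}) (cf.\ Remark \ref{rmk:ejcon}) with $R=\delta\leq\delta_S$; by conclusion (i) of the Ejection Lemma $d_Q$ grows like $e^{kt}R$, so it reaches $\delta_X$ at a first time $T_X>0$. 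With $\psi^*:=\psi(T_X)$, parts (i) and (iii) of Lemma \ref{lem:ejlem} give
\[\mathrm{sign}[\lm_1(\psi^*)]=\mathfrak{s},\qquad \mathfrak{s}\,W(\psi^*)\gec d_Q(\psi^*)-C_*\,d_Q(\psi(0))=\delta_X-C_*\delta\geq\delta_X-C_*\delta_S=\tfrac12\delta_X>0,\]
so $\mathrm{sign}[W(\psi^*)]=\mathfrak{s}$ too; the choice $\delta_S=\delta_X/(2C_*)$ is exactly what makes the right-hand side strictly positive, hence sign-determining. The ejection segment $\{\psi(t):0\leq t\leq T_X\}$ lies in $O$, so $\psi^*$ lies in the $\mathrm{sign}[\lm_1]=\mathfrak{s}$ component of $O$ near $\sigma\tQ$, on which therefore $\mathrm{sign}[\lm_1]=\mathrm{sign}[W]$; this proves (b). (Running the Ejection Lemma backward in time, Remark \ref{rmk:ejback}, gives the same statement along the stable manifold, an alternative route to the boundary components.)

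The formal parts — the gluing lemma and the tautological uniqueness — are painless; the real work is step (b), and there I expect the main obstacle to be the interplay between pinning down the connected components of the overlap $O$ near $\pm\tQ$ (and checking each is met by the relevant branch of the (un)stable manifold), which forces $\delta_E$, hence $\delta_X$ and $\delta_S$, to be small enough for the cone description of $\cH_{(\delta)}$ near the ground states to be accurate, and preserving the constant chain $\delta\leq\delta_S=\delta_X/(2C_*)$ so that the Ejection Lemma estimate $\mathfrak{s}W\gec d_Q-C_*d_Q(0)$, evaluated at $d_Q=\delta_X$ with base point at $d_Q=\delta$, remains strictly positive.
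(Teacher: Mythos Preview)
Your proof is correct and follows essentially the same approach as the paper: both establish continuity of $\mathrm{sign}[\lm_1]$ and $\mathrm{sign}[W]$ on their respective regions via Lemmas \ref{lem:ED} and \ref{lem:var_DF}, and then use the Ejection Lemma \ref{lem:ejlem} to run an orbit from a small radius out to $d_Q=\delta_X$, where the estimate $\mathfrak{s}W\gec d_Q-C_*d_Q(\psi(0))$ together with $\delta_S=\delta_X/(2C_*)$ forces the two signs to agree. The only difference is cosmetic: the paper starts the ejection at $d_Q(\psi(0))=\delta_S$ with a generic ejection-satisfying initial condition (citing Nakanishi--Schlag for the details), whereas you launch from $d_Q=\delta$ along the unstable manifold and are more explicit about the connected-component structure of the overlap $O$ via the cone description --- extra care that the paper leaves implicit.
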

\begin{proof}
The proof is the same as that in Nakanishi-Schlag (\cite{NaSc11}, Lemma 4.9). In particular, Lemma \ref{lem:ED} implies that $\lm_1$ is continuous for $d_Q(\psi)\leq\delta_E$, and Lemma \ref{lem:var_DF} implies that $\mathrm{sign}[W(\psi)]$ is continuous for $d_Q(\psi)\geq\delta$. Thus it suffices to show that they coincide at $d_Q(\psi)=\delta_S\in[\delta,\delta_X]$ in $\cH_{(\delta)}$. Let $\psi(t)$ be a solution of the HLP with $\psi(0)\in\cH_{(\delta)}, d_Q(\psi(0))=\delta_S$, and such that the ejection condition (\ref{eq:ejcon}) is satisfied. The ejection condition is easy to achieve, for instance see Remark \ref{rmk:ejcon}. Then the Ejection Lemma implies that $\psi(t)$ stays in $\cH_{(\delta)}$ and that $\mathrm{sign}[\lm_1(t)]$ is constant, until $d_Q(\psi(t))$ reaches $\delta_X$, at which time $\mathrm{sign}[W(\psi(t))]$ is the same as $\mathrm{sign}[\lm_1(t)]$, because $2C_*\delta_S\leq \delta_X$. Since $\mathrm{sign}[W(\psi)]$ is constant for $d_Q(\psi)\geq\delta$, we conclude that $\mathrm{sign}[W(\psi(t))]=\mathrm{sign}[\lm_1(t)]$ from the beginning $t=0$.
\end{proof}

\subsection{The one-pass theorem}

In this section we prove the crucial no-return property of the solutions for $\alpha\geq 2$. Indeed, thanks to Lemma \ref{lem:var_DF} we are able to prove this for the case when $\kS=-1$. When $\kS=+1$, we lose the control of the sign of $K$, and the method from Nakanishi-Schlag seem to fail. We conjecture that the no-return property still holds for $\kS=+1$ and provide another direction about the proof of the no-return property for this case.  We remark that the no-return property means that there are no homoclinic or heteroclinic orbits relative to $\pm\tQ$. The following theorem implies that there are no heteroclinic orbits between the two equilibria, moreover, when a trajectory exits the $2\epsilon$ ball of $\tQ$ or $-\tQ$ with $\kS=-1$, i.e. ejecting into the region $W<0$, then it will collide with the origin. This is very different from the Newtonian Hill's Lunar problem, where an orbit enters the region $W<0$ may still leave it through one of the bottle necks. See Figure 7 in \cite{WaBuWi05}.   

\begin{theorem}[One-pass theorem]
\label{thm:onepass}
For the strong force $\alpha\geq 2$, there exist constants $0<\epsilon_*\ll R_*\ll 1$ with the property: for any solution $\psi$ of the HLP (\ref{eq:hlpsym}) on the maximal interval $[0,T_{\mathrm{max}})$ satisfying \[E(\psi)<E^*+\epsilon^2, \quad d_Q(\psi(0))<R,\] for some $\epsilon\in(0,\epsilon_*]$, $R\in(2\epsilon, R_*]$, and let \EQ{T_{\mathrm{trap}}:=\sup\{t_*\geq 0|  d_Q(\psi(t))<R+R^{2}, \forall \,\,t\in [0, t_*)\},} then one has the following:
\begin{enumerate}
\item[(i)] if $T_{\mathrm{trap}}=T_{\mathrm{max}}$, then $\psi$ is ``trapped'' at $\pm\tQ$ and $T_{\mathrm{max}}=\infty$;
\item[(ii)]  if $T_{\mathrm{trap}}<T_{\mathrm{max}}$, and $\kS(T_\mathrm{trap})=-1$, then $\kS(t)$ does not change on $[T_{\mathrm{trap}}, T_{\mathrm{max}})$, and $d_Q(\psi(t))\geq R+R^{2}$ for all $T_{\mathrm{trap}}\leq t< T_{\mathrm{max}}$. Moreover, the solution has a finite time collision.
\end{enumerate}
\end{theorem}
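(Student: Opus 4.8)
The plan is to run the Nakanishi--Schlag one-pass scheme, taking as inputs the Ejection Lemma \ref{lem:ejlem} (and, where convenient, its backward version Remark \ref{rmk:ejback}), the variational estimate Lemma \ref{lem:var_DF}, the continuity of the sign function $\kS$ on $\cH_{(\delta)}$ from Lemma \ref{lem:signfun}, and the virial identity $\ddot I=K$ with $I=\tfrac12(x^2+y^2)\ge0$. I fix $\delta=\delta_S=\delta_X/(2C_*)$, shrinking it further if needed so that $|\dot I(\psi)|\lec d_Q(\psi)$ for $d_Q(\psi)\le\delta$ (legitimate since $\dot I=xp_x+yp_y$ is smooth and vanishes at $\pm\tQ$), let $\epsilon(\delta),\kappa(\delta)>0$ be produced by Lemma \ref{lem:var_DF}, and choose $\epsilon_*<\epsilon(\delta)$ and $R_*\le\delta_X$ small enough for all the $O(R^2)$ and exponential estimates below. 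With $R\in(2\epsilon,R_*]$ and $E(\psi)<E^*+\epsilon^2$ one has $E(\psi)<E^*+\min(d_Q(\psi)^2/2,\epsilon(\delta)^2)$ whenever $d_Q(\psi)\ge R$, so the orbit lies in $\cH_{(\delta)}$ there and $\kS(\psi(t))$ is well defined and continuous in $t$. Part (i) is then immediate: if $T_{\mathrm{trap}}=T_{\mathrm{max}}$ the orbit never leaves $\{d_Q<R+R^2\}$, hence (using $d_Q(\psi)\sim\min(|\psi-\tQ|,|\psi+\tQ|)$ and that $\pm\tQ$ sit at distance $q_0>0$ from the origin) $(x,y)$ stays uniformly away from the collision point; since collision is the only singularity of the HLP, $T_{\mathrm{max}}=\infty$.

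For Part (ii), let $t_1\in(0,T_{\mathrm{trap}})$ be the last time with $d_Q(\psi(t_1))=R$ (it exists since $d_Q(\psi(0))<R<R+R^2=d_Q(\psi(T_{\mathrm{trap}}))$), so $d_Q\ge R$ on $[t_1,T_{\mathrm{trap}}]$. The Ejection Lemma, applied forward from $t_1$, gives on $[t_1,T_X]$ (with $T_X$ the first time $d_Q=\delta_X$, which is finite and $<T_{\mathrm{max}}$): $d_Q(\psi(t))\sim\sk\lm_1(t)\sim\sk\lm_+(t)\sim e^{k(t-t_1)}R$, strict monotonicity of $d_Q$ on $[t_1+T_*R,T_X]$, and $\sk K(\psi(t))\gec d_Q(\psi(t))-C_*R$, with $\sk=\mathrm{sign}[\lm_1(t_1)]$ constant. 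Since $\kS$ is continuous on the part of the orbit in $\cH_{(\delta)}$, which contains $[t_1,T_{\mathrm{trap}}]$, this sign equals $\kS(T_{\mathrm{trap}})=-1$; hence $\sk=-1$, $T_{\mathrm{trap}}\in(t_1+T_*R,T_X)$, $d_Q$ is strictly increasing through $T_{\mathrm{trap}}$ up to $T_X$, and $K(\psi(t))<0$ once $d_Q(\psi(t))>C_*R$. Integrating $\ddot I=K$ over $[t_1,T_X]$: $|\dot I(t_1)|\lec R$ because $\psi(t_1)$ is $O(R)$-close to $\pm\tQ$; over the portion where $d_Q\in[\delta_X/2,\delta_X]$ one has $-K\gec\delta_X/4$ for a time $\gec\ln2/k$, contributing a drop of at least a fixed $m_1=m_1(\delta_X,k)>0$ to $\dot I$; elsewhere $K\le0$ except on the short set $\{d_Q\le C_*R\}$ where $|K|\lec R$; so $\dot I(T_X)\le CR-m_1<-m_1/2$ for $R$ small.

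Now I claim $d_Q(\psi(t))\ge\delta$ for all $t\in[T_X,T_{\mathrm{max}})$, which is the no-return statement. Suppose not and let $\sigma$ be the first time after $T_X$ with $d_Q(\psi(\sigma))=\delta$; then $d_Q>\delta$ on $[T_X,\sigma)$, where $\kS\equiv-1$ forces, via Lemma \ref{lem:var_DF}, $K\le-\kappa(\delta)<0$, so $\dot I(\sigma)\le\dot I(T_X)-\kappa(\delta)(\sigma-T_X)\le\dot I(T_X)<-m_1/2$. But $|\dot I(\sigma)|\lec d_Q(\psi(\sigma))=\delta$, and $\delta$ was chosen so small that $C\delta<m_1/2$ --- a contradiction. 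Hence $d_Q\ge\delta$ on $[T_X,T_{\mathrm{max}})$; combined with the ejection monotonicity on $[T_{\mathrm{trap}},T_X]$ this gives $d_Q(\psi(t))\ge R+R^2$ on $[T_{\mathrm{trap}},T_{\mathrm{max}})$, and since the orbit then stays in $\cH_{(\delta)}$ and $\kS$ is continuous there, $\kS(t)\equiv-1$ on $[T_{\mathrm{trap}},T_{\mathrm{max}})$. Finally, with $d_Q\ge\delta$ and $\kS\equiv-1$ on $[T_X,T_{\mathrm{max}})$, Lemma \ref{lem:var_DF} gives $\ddot I=K\le-\kappa(\delta)<0$ there, so $I(t)\le I(T_X)+\dot I(T_X)(t-T_X)-\tfrac12\kappa(\delta)(t-T_X)^2\to-\infty$; since $I\ge0$ this is impossible unless $T_{\mathrm{max}}<\infty$, i.e. the solution has a finite time collision.

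I expect the main obstacle to be the precise bookkeeping of the constants --- matching $\delta_S=\delta_X/(2C_*)$, the Lipschitz constant in $|\dot I|\lec d_Q$, and the fixed drop $m_1$, so that the smallness of $R$ relative to all of them makes the virial estimates in the second and third paragraphs genuinely sign-definite --- and, should one prefer not to shrink $\delta$, handling the times with $d_Q<\delta$ that might occur after $T_X$ by anchoring forward/backward ejection windows at the local minima of $d_Q$, exactly as in Nakanishi--Schlag. This is also the point where the argument for $\kS=+1$ breaks down, since there Lemma \ref{lem:var_DF} no longer pins the sign of $K$.
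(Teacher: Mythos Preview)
Your proof is correct and runs the same Nakanishi--Schlag virial scheme as the paper, but with a genuine simplification in the no-return step. The paper argues by contradiction that the solution returns to distance $R$ at some later time $T_3>T_1$, then decomposes $(T_1,T_3)$ into ``hyperbolic'' intervals $\bI_j$ (around each local minimum of $d_Q$ below a threshold $\delta_*$, invoking the Ejection Lemma both forward and backward) and ``variational'' intervals $\bI'$ where $d_Q\ge\delta_*$, and compares the total drop $|\dot I(T_3)-\dot I(T_1)|\gec n\delta_X\ge\delta_X$ against $|\dot I(T_1)|+|\dot I(T_3)|\lec R$. You instead track $\dot I$ forward only: a single application of the Ejection Lemma on $[t_1,T_X]$ already forces $\dot I(T_X)<-m_1/2$ with $m_1\sim\delta_X$ fixed, and since $K\le-\kappa(\delta)<0$ whenever $d_Q\ge\delta$, $\dot I$ can only decrease further thereafter; if $d_Q$ ever dropped back to $\delta$ the Lipschitz bound $|\dot I|\lec\delta$ there would contradict $\dot I<-m_1/2$, provided $\delta$ has been shrunk once more against $m_1(\delta_X,k)$. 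This buys you the stronger conclusion $d_Q\ge\delta$ (not merely $\ge R+R^2$) on $[T_X,T_{\max})$ and eliminates the bookkeeping of multiple hyperbolic windows entirely; the price is one extra smallness condition on $\delta$, harmless since $m_1$ is fixed before $\delta$ is finalized. Your closing paragraph correctly identifies the paper's alternative route (anchoring at local minima of $d_Q$) and the common obstruction for $\kS=+1$.
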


The proof relies on a virial type argument ($\ddot{I}=K$) and is similar to that of PDE, cf. Theorem 4.1 in \cite{NaSc12}, Theorem 4.11 in \cite{NaSc11} and Theorem 7.1 in \cite{AIKN19}. In the paper, we follow closely with that in \cite{NaSc12}. We choose $R+R^2$ for the dichotomy because the distance function may exhibit oscillations on the order of $O(R^3)$, and so we need some room to ensure a true ejection from the small neighborhood. The most work of the proof is on the no-return statement, as the finite time collision then follows easily. Indeed, the finite time collision in the $\kS=-1$ region follows from $W<-\kappa<0,$ hence $K<-\kappa<0$ and we can use the argument as in Proposition \ref{prop:K1ms}. Thus the one-pass theorem implies Theorem \ref{thm:above}.

We now turn to the proof of the no-return statement. We may assume that $\psi$ does not stay very close to $\pm\tQ$ for all $t>0$, so that we can apply the Ejection Lemma \ref{lem:ejlem} at some time $t_*= T_{\mathrm{trap}}>0$. The idea is to combine the hyperbolic structure of Lemma \ref{lem:ejlem} close to $\pm\tQ$ with the variational structure in Lemma \ref{lem:var_DF} away from $\pm\tQ$, in order to control the virial identity through $K(\psi)$. We choose $\delta_*>0$ as the distance threshold between the two regions in $\R^4$: for $d_Q(\psi)<\delta_*$ we use the hyperbolic estimate in Lemma \ref{lem:ejlem}, and for $d_Q(\psi)>\delta_*$ we use the variational estimate in Lemma \ref{lem:var_DF}. So $\delta_*, \epsilon_*,R_*$ should satisfy 
\EQ{\label{eq:OP0}\epsilon_*\ll R_*\ll\delta_*\ll \delta_S,\quad \epsilon_*\leq\epsilon(\delta_*).} We shall impose further smallness conditions on $\delta_*, R_*, \epsilon_*$. Afterward, $R_*$ and then $\epsilon_*$ need to be made even smaller in order to satisfy the above conditions, depending on $\delta_*$.

Suppose towards a contradiction that $\psi$ solves the HLP (\ref{eq:hlpsym}) on $[0,T_\mathrm{max})$ in $\R^4$ satisfying for some $0<T_1<T_2<T_3<T_\mathrm{max}$ and all $t\in(T_1, T_3)$, 
\EQ{\label{eq:OP1}d_Q(\psi(0))<R=d_Q(\psi(T_1))=d_Q(\psi(T_3))<d_Q(\psi(t)), \quad d_Q(\psi(T_2))\geq R+R^2,} as well as $E(\psi)<E^*+\e^2$ for some $\e\in(0,\e_*]$ and $R\in(2\e,R_*]$. 

Lemma \ref{lem:signfun} implies that $\sk:=\kS(\psi(t))\in\{\pm1\}$ is well-defined and constant on $T_1\leq t\leq T_3$.

We apply the Ejection Lemma \ref{lem:ejlem} first from $t=T_1$ forward in time. Then by the lemma, there exists $T_1'\in(T_1, T_1+T_*R)$ such that $d_Q(\psi(t))$ increases  for $t>T_1'$ until it reaches $\de_X$, and $d_Q(\psi(T_1'))=R+O(R^3)<d_Q(\psi(T_2))\ll\de_X$. Hence $T_1<T_1'<T_2$, and by the lemma there is  $T_1''\in(T_1', T_3)$ such that $d_Q(\psi(t))$ increases exponentially on $(T_1',T_1'')$, $d_Q(\psi(T_1''))=\de_X$ and on $(T_1, T_1'')$, 
\EQ{\label{eq:OP2}d_Q(\psi(t))\sim e^{k(t-T_1)}R,\quad \sk K(\psi(t))\gec(e^{k(t-T_1)}-C_*)R.} We can argue in the same way from $t=T_3$ backward in time to obtain a time interval $(T_3'',T_3)\subset(T_1'',T_3)$, so that $d_Q(\psi(T_3''))=\de_X$,
\EQ{\label{eq:OP3}d_Q(\psi(t))\sim e^{k(T_3-t)}R,\quad \sk K(\psi(t))\gec(e^{k(T_3-t)}-C_*)R, \quad t\in(T_3'',T_3),} and $d_Q(\psi(t))$ is decreasing in the region $d_Q(\psi(t))\geq R+R^2$. Moreover, from any $\tau\in(T_1'', T_3'')$ where $d_Q(\psi(\tau))<\de_*$ is a local minimum, we can apply the Ejection Lemma both forward and backward in time, thereby obtaining an open interval $\bI_\tau\subset (T_1'', T_3'')$ so that $d_Q(\psi(\partial \bI_\tau))=\{\de_X\}$, 
\EQ{\label{eq:OP4}d_Q(\psi(t))\sim e^{k|t-\tau|}d_Q(\psi(\tau)),\quad \sk K(\psi(t))\gec(e^{k|t-\tau|}-C_*)d_Q(\psi(\tau)),\quad t\in \bI_\tau,} and $d_Q(\psi(t))$ is monotone in the region $d_Q(\psi(t))\geq 2d_Q(\psi(\tau))$, which is the reason for $\bI_\tau\subset(T_1'',T_3'')$. Moreover, the monotonicity away from $\tau$ implies that any two intervals $\bI_{\tau_1}$ and $\bI_{\tau_2}$ for distinct local minimal points $\tau_1$ and $\tau_2$ are either disjoint or identical. Therefore, we have obtained disjoint open subintervals $\bI_1, \cdots, \bI_n\subset(T_1,T_3)$ with $n\geq 2$, where we have either (\ref{eq:OP2}), (\ref{eq:OP3}) or (\ref{eq:OP4}) with $\tau=\tau_j\in \bI_j$. 

At the remaining times 
\EQ{\label{eq:OP5} t\in \bI':=(T_1, T_3)\setminus\bigcup_{j=1}^n\bI_j,} we have $d_Q(\psi(t))\geq\de_*$, so that we can apply Lemma \ref{lem:var_DF} to obtain for $t\in\bI'$,
\EQ{\label{eq:OP6}\begin{cases}W(\psi(t))\leq -\kappa(\delta_*), K(\psi(t))\leq-\kappa(\delta_*)&(\sk=-1),\\W(\psi(t))\geq \kappa(\delta_*)&(\sk=+1).\end{cases}}

We shall call the $\bI_j$'s the ``hyperbolic'' intervals, and $\bI'$ the ``variational'' intervals. See Figure \ref{fig:OP}.

\begin{figure}[h]
	\centering
	\begin{tikzpicture}

	\draw [->] (0, 0)--(10, 0)node [below] {$t$};
	\draw [->](0, 0)--(0, 8) node  [left] {$d_Q$};
	\draw (0, 7.3)--(10, 7.3) node at (0,7.3) [left] {$\delta_X$};
  \draw (0, 5)--(10, 5) node at (0,5) [left] {$\delta_*$};
  \draw (0, 1.5)--(10, 1.5) node at (0,1.5) [left] {$R+R^2$};
  \draw (0, 1)--(10, 1) node at (0,1) [left] {$R$};
   \draw [thick] (0,0.2) to [out=75, in=180] (0.2,0.6) to [out=350,in=170] (0.4,0.3) to           [out=350, in=240] (0.6,0.5) to [out=60,in=230] (0.8,1)to [out=60,in=230] (0.87,1.3);
  \draw[thick] (0.87,1.3) to [out=0,in=170] (1,1.1);
  \draw[thick] (1,1.1) to [out=0,in=260] (1.2,1.5) to [out=80,in=260] (2.2,7.3) to [out=75,in=180] (2.5,8) to [out=0,in=100] (2.7,7) to [out=350, in=240] (2.8,7.4) to [out=75,in=100] (3,7.3);
 \draw [thick] (3.9,3.1) to [out=120,in=280] (3,7.3);  
 \draw [thick] (3.9,3.1)to [out=280,in=170] (4,3)  to [out=0,in=70] (4.1,3.1)  to [out=70,in=260] (5,7.4)  to [out=60,in=200] (5.3,7.6)  to [out=0,in=95] (5.4,7.5)  to [out=280,in=170] (5.8,6.5)  to [out=0,in=60] (6,6.7) to [out=70,in=260] (6.5,8)  to [out=80,in=95] (6.7,7.9)  to [out=300,in=100] (8,3);
     \draw [thick] (8.5,1) to [out=120, in=280] (8,3);

\draw [thick,dotted] (0.8,1) -- (0.8,0);
\draw [thick,dotted] (1,1.1) -- (1,0);
\draw [thick,dotted] (1.4,2.5) -- (1.4,0);
\draw [thick,dotted] (2.2,7.3) -- (2.2,0);
\draw [thick,dotted] (3,7.3) -- (3,0);
\draw [thick,dotted] (5,7.3) -- (5,0);
\draw [thick,dotted] (7,7.3) -- (7,0);
\draw [thick,dotted] (8.5,1) -- (8.5,0);

\draw [thick,dotted] (4,3) -- (4,0);

\node  at (0, 0) [below]  {0};
\node  at (0.7, 0) [below]  {\tiny $T_1$};		
\node at (1.1, 0) [below]  {\tiny $T_1'$};		
\node at (1.5, 0) [below]  {\tiny $T_2$};		
\node at (2.2, 0) [below]  {\tiny $T_1''$};
\node at (4, 0) [below]  { $\tau$};				
\node at (7, 0) [below]  {\tiny $T_3''$};	
\node  at (8.5, 0) [below]  {\tiny $T_3$};

\draw [fill] (0,0.2) circle [radius=0.05];
\draw [fill] (0.8,1) circle [radius=0.05];
\draw [fill] (1,1.1) circle [radius=0.05];
\draw [fill] (1.4,2.6) circle [radius=0.05];
\draw [fill] (2.2,7.3) circle [radius=0.05];
\draw [fill] (3,7.3) circle [radius=0.05];
\draw [fill] (5,7.3) circle [radius=0.05];
\draw [fill] (7,7.3) circle [radius=0.05];
\draw [fill] (4,3) circle [radius=0.05];
\draw [fill] (8.5,1) circle [radius=0.05];

\draw [ultra thick, red] (0.8,0)--(2.2,0);
\draw [ultra thick, red] (3,0)--(5,0);
\draw [ultra thick, red] (7,0)--(8.5,0);
\draw [ultra thick, blue] (2.2,0)--(3,0);
\draw [ultra thick, blue] (5,0)--(7,0);

\draw [thick] (3,-0.5)--(3,-0.8);
\draw [thick] (5,-0.5)--(5,-0.8);	
\draw [thick,->] (3,-0.65)--(3.6,-0.65);
\draw [thick,->] (5,-0.65)--(4.4,-0.65);
\node at (4,-0.65) {$\bI_\tau$};
	
	\end{tikzpicture}
	\caption{ Behavior of $d_Q(\psi(t))$ in $(T_1, T_3)$ when there is a local minimum at $\tau$ with $d_Q(\psi(\tau))<\delta_*$. The red intervals indicate the ``hyperbolic'' region and the blue intervals indicate the ``variational'' region. The curve is smooth.} \label{fig:OP}
\end{figure}
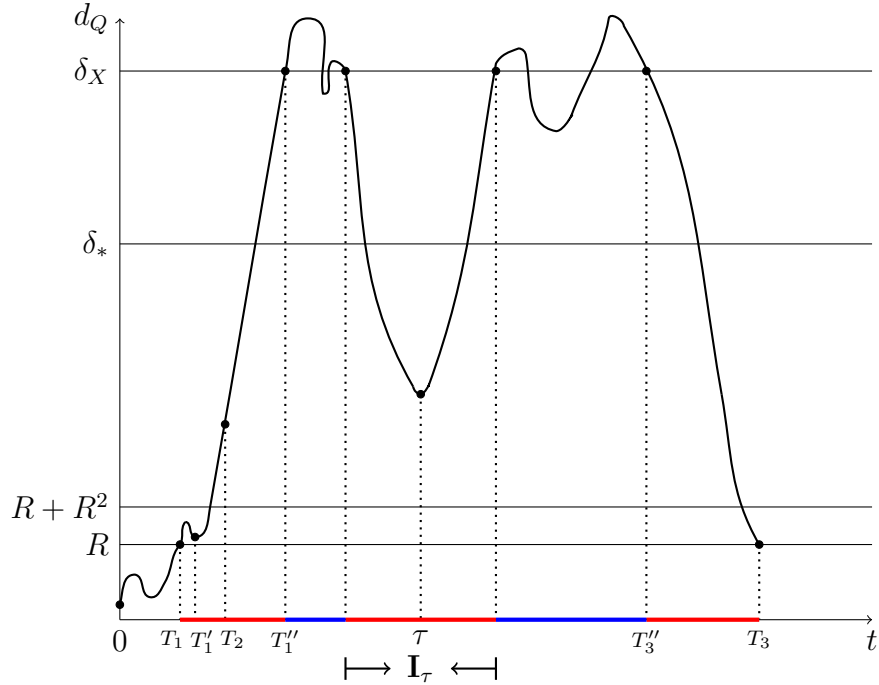

\subsubsection{Virial estimate in the finite time collision case $\sk=-1$}
Notice that the virial identity is 
\EQ{\label{eq:OP7}\frac{d^2}{dt^2}I(\psi(t))=K(\psi(t)).}Multiply (\ref{eq:OP7}) by $\sk=-1$ and integrate over $(T_1,T_3)$. Combining (\ref{eq:OP2})-(\ref{eq:OP6}) we obtain 
\EQ{\label{eq:OP8}|\dot{I}(\psi(T_3))-\dot{I}(\psi(T_1))|&\gec \sum_{j=1}^n\int_{\bI_j}(e^{k|t-\tau_j|}-C_*)d_Q(\psi(\tau_j))dt+\int_{\bI'}\kappa(\delta_*)dt\\&\gec n\delta_X\geq \delta_X.} On the other hand, 
\EQ{\dot{I}=x\dot{x}+y\dot{y}=x(p_x+y)+y(p_y-x)=xp_x+yp_y,}thus $\dot{I}(\pm\tQ)=0$. At $t=T_1, T_3$ we have $d_Q(\psi(t))=R$, therefore
\EQ{|\dot{I}(\psi(T_3))-\dot{I}(\psi(T_1))|\lec R\leq R_*\ll \de_X.} Comparing this bound with (\ref{eq:OP8}) leads to a contradiction. In conclusion, the solution $\psi(t)$ cannot return to the $R$-ball from the $\sk=-1$ side.

\subsubsection{No-return property for the case $\sk=+1$} In this case, we lose the sign definiteness of $K$ in the variational region thus the above virial type argument seems to fail for $\sk=+1$. However, we conjecture that the no-return property should still hold. This is somehow supported by the results in the Newtonian case. In \cite{LMaSi85}, the authors showed that the unstable manifolds for the equilibria in the outer region (i.e. $W>0$) goes forward to infinity for the Newtonian case. Their proof relies on the increasing of the argument $\theta$ along the orbits for a suitable complex coordinate system for the general restricted three-body problem (R3BP), which is the main theorem of \cite{Mc69}. In particular, McGehee \cite{Mc69} showed that all orbits (with energy slightly above $L_2$) of the Newtonian R3BP leaving a vicinity of one of the equilibira proceed around an invariant annulus before returning to that vicinity. The Hill's lunar problem is a limiting case of the R3BP, and that is how Llibre-Mart\'inez-Sim\'o (cf. section 4 \cite{LMaSi85}) showed the unstable manifolds in the outer region for $L_1, L_2$ must go to infinity (no-return). The idea is roughly sketched in Figure \ref{fig:McLMS}.

\begin{figure}[h]

\begin{subfigure}[b]{0.45\textwidth}
\centering
  \includegraphics[width=\textwidth]{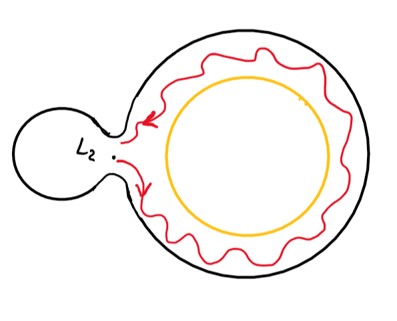}
 
  \end{subfigure}
\qquad\begin{subfigure}[b]{0.45\textwidth}
\centering
  \includegraphics[width=\textwidth]{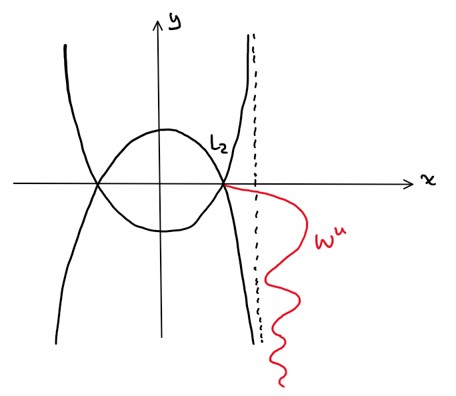}
 
  \end{subfigure}
\caption{Left: main technical result of McGehee \cite{Mc69} for the R3BP. Right: the unstable manifolds of the equilibria of the HLP in the outer region Llibre-Mart\'inez-Sim\'o (cf. section 4 \cite{LMaSi85})}

\label{fig:McLMS}
\end{figure}

Initially, we attempted to show the same result for the R3BP with $\alpha\geq2$ as in \cite{Mc69}, but it seems that this is not the case. Instead of going around  an invariant annulus (such an invariant annlus may not exist for strong potential R3BP) in the configuration space, all solutions leaving a neighborhood of the $L_2$ very likely will collide with one of the primary bodies in finite time. In particular, we conjecture that for the strong potential R3BP, all solutions with energy below the second Lagrange point (this energy is the threshold energy when one has a bounded component of the Hill's region for the R3BP) should collide with one of the primary bodies in finite time if they started in the bounded component of its Hill's region. Assuming this conjecture, then by passing to a limit similar to that of \cite{LMaSi85}, we can show that the unstable manifolds of the equilibira in the outer region $(W>0)$ of the strong potential HLP go to infinity, hence completing the no-return property for $\sk=+1$.

We remark that characterizing the fates of the solutions for the strong potential R3BP itself is a very interesting problem, and it is a more complicated problem than the HLP. It is our hope that more attention could be put to this problem for a broader community.

\section*{acknowledgement}

The authors are grateful to Takafumi Akahori and Kenji Nakanishi for helpful discussions. The first author is supported by Sun Yat-sen University start-up grant No. 74120-18841213. The second author is supported by the NSERC grant No. 371637-2019.


\section*{Appendix: Defining the Hill's lunar problem}
Following \cite{MeSc82}, we introduce scaled symplectic coordinates into the general three body problem with $\alpha$-potential so that the Hill's lunar equations are the equations of the first approximation. We remark that there is no essential difficulty in extending \cite{MeSc82} to the $\alpha$-potential, we include this section for completeness. 

Consider a uniform rotating frame with frequency equal to one with reference to a fixed inertial frame and let $x_0, x_1, x_2; y_0, y_1, y_2\in\R^2$ be the position and momentum vectors relative to the rotating frame of three particles of masses $m_0, m_1, m_2$. The Hamiltonian defining the equations of motion of the three particles with $\alpha$-potential is 

\begin{equation}\label{eq:H}H=\sum\limits_{i=0}^2(\frac{|y_i|^2}{2m_i}-x_i^TJy_i)-\sum\limits_{i<j}\frac{m_im_j}{|x_i-x_j|^\alpha},\end{equation}where $J=\begin{pmatrix}0&1\\-1&0\end{pmatrix}$. We shall refer to the particles of mass $m_0, m_1$ and $m_2$ as the earth, moon and sun, respectively. Since the center of mass is preserved and we want to scale the distances between the earth and moon, we use the Jacobi coordinates, which is a symplectic change of coordinates:

\begin{equation}\label{eq:jocobi}
\begin{split}
u_0&=(m_0+m_1+m_2)^{-1}(m_0x_0+m_1x_1+m_2x_2),\\
u_1&=x_1-x_0,\\
u_2&=x_2-(m_0+m_1)^{-1}(m_0x_0+m_1x_1),\\
v_0&=y_0+y_1+y_2,\\
v_1&=(m_0+m_1)^{-1}(m_0y_1-m_1y_0),\\
v_2&=(m_0+m_1+m_2)^{-1}[(m_0+m_1)y_2-m_2(y_0+y_1)],
\end{split}
\end{equation} and obtain

\begin{equation}\label{eq:H0}H=\sum\limits_{i=0}^2(\frac{|v_i|^2}{2M_i'}-u_i^TJv_i)-\frac{m_0m_1}{|u_1|^\alpha}-\frac{m_1m_2}{|u_2-\nu_0'u_1|^\alpha}-\frac{m_0m_2}{|u_2+\nu_1'u_1|^\alpha},\end{equation}where

\begin{equation}
\begin{split}
&M_0'=m_0+m_1+m_2, \qquad M_1'=(m_0+m_1)^{-1}m_0m_1, \\
&M_2'=(m_0+m_1+m_2)^{-1}(m_0+m_1)m_2,\\
&\nu_0'=(m_0+m_1)^{-1}m_0, \qquad \nu_1'=(m_0+m_1)^{-1}m_1.
\end{split}
\end{equation}
Since $H$ is independent of $u_0$ (the center of mass), its conjugate variable $v_0$ (total linear momentum) is an integral. As usual, we take $u_0=v_0=0$, thus we shall proceed with the Hamiltonian defined in (\ref{eq:H0}) with the summation index from $i=1$ to 2. We will make various assumptions on the size of various quantities until we are led to a definition of the equation of the first approximation for lunar theory. Following \cite{MeSc82}, we will use the common arrow notation in scaling problems. 

The first assumption is that the earth and moon have approximately the same mass and are small compared to the mass of the sun, thus we let 

\begin{equation}m_0=\epsilon^{2\gamma}\mu_0, \qquad m_1=\epsilon^{2\gamma}\mu_1, \qquad m_2=\mu_2,\end{equation}
where $\epsilon$ is a small positive parameter and $\gamma$ is a positive number to be chosen later. Since the masses of $m_0$ and $m_1$ are of order $\epsilon^{2\gamma}$, so will be their momenta provided their velocities are of order 1. We make the substitutions $v_1\to \epsilon^{2\gamma}v_1, v_2\to \epsilon^{2\gamma}v_2$ in (\ref{eq:H0}). With this symplectic change of variables with multiplier $\epsilon^{2\gamma}$ the Hamiltonian becomes

\begin{equation}\label{eq:H1}
\begin{split}
H&=H_1+H_2+O(\epsilon^{2\gamma}),\\
H_1&=\frac{|v_1|^2}{2M_1}-u_1^TJv_1-\frac{\epsilon^{2\gamma}\mu_0\mu_1}{|u_1|^\alpha},\\
H_2&=\frac{|v_2|^2}{2M_2}-u_2^TJv_2-\frac{\mu_1\mu_2}{|u_2-\nu_0u_1|^\alpha}-\frac{\mu_0\mu_2}{|u_2+\nu_1u_1|^\alpha},
\end{split}
\end{equation}
where \begin{equation}
\begin{split}
&M_1=(\mu_0+\mu_1)^{-1}\mu_0\mu_1, \qquad M_2=\mu_0+\mu_1, \\
&\nu_0=(\mu_0+\mu_1)^{-1}\mu_0, \qquad \nu_1=(\mu_0+\mu_1)^{-1}\mu_1.
\end{split}
\end{equation}
Note that the $O(\epsilon^{2\gamma})$ only depends on $|v_1|$ and $|v_2|$.

The next assumption is that the distance between the earth and moon ($|u_1|$) is small relative to the distance between the sun and the center of mass of the earth-moon system ($|u_2|$). We make the change of variables $u_1\to \epsilon^{2\delta}u_1$, where $\delta$ is a positive number to be chosen later. Following \cite{MeSc82}, we use the Legendre polynomials to expand the $\alpha$-potential terms in $H_2$. In particular, if $u, u'$ are two vectors with $|u|>|u'|$, then 
\begin{equation}
\frac{1}{|u-u'|}=\sum\limits_{k=0}^\infty\frac{|u'|^k}{|u|^{k+1}}P_k(\cos\theta),
\end{equation}
where $P_k$ is the $k$-th order Legendre polynomial and $\theta$ is the angle between $u, u'$. Use the Taylor series $(1+x)^\alpha=1+\alpha x+\frac{\alpha(\alpha-1)}{2}x^2+\cdots$, and the assumption that $|u_2|>>|u_1|$ we have
\begin{equation}\label{eq:legendre}
\begin{split}
\frac{\mu_1\mu_2}{|u_2-\nu_0u_1|^\alpha}&+\frac{\mu_0\mu_2}{|u_2+\nu_1u_1|^\alpha}=\frac{\mu_2(\mu_0+\mu_1)}{|u_2|^\alpha}+\\
&\frac{\mu_2(\mu_1\nu_0^2+\mu_0\nu_1^2)|u_1|^2}{|u_2|^{\alpha+2}}[\alpha P_2(\cos\theta)+\frac{\alpha(\alpha-1)}{2}P_1^2(\cos\theta)]+h.o.t.
\end{split}
\end{equation}
where $\theta$ is the angle between $u_1$ and $u_2$ and $h.o.t.$ stands for higher order terms. Let 
\begin{equation}
H_3=\frac{|v_2|^2}{2M_2}-u_2^TJv_2-\frac{\mu_2(\mu_0+\mu_1)}{|u_2|^\alpha},
\end{equation}
then $H_3$ is the Hamiltonian of the Kepler problem, where a fixed body of mass $\mu_2$ is located at the origin and another body of mass $\mu_0+\mu_1$ moves in a rotating frame and is attracted to the fixed body by the $\alpha$-potential. One can think of the fixed body as the sun and the other body as the union of the earth and moon.

The third and final assumption we shall make is that the center of mass of the earth-moon system moves on a nearly circular orbit about the sun. Since $H_3$ is the Hamiltonian of a Kepler problem in rotating coordinates, one of the circular orbits becomes a circle of critical points for $H_3$. In particular, $H_3$ has a critical point $u_2=a, v_2=-M_2Ja$ for any constant vector $a$ satisfying $|a|^{\alpha+2}=\alpha \mu_2$. Introduce coordinates, \[Z=\begin{pmatrix}u_2\\v_2\end{pmatrix}\] and a constant vector \[Z_0=\begin{pmatrix}a\\-M_2Ja\end{pmatrix}\] so that $H_3$ is a function of $Z$ and $\nabla H_3(Z_0)=0$. By Taylor's theorem 
\begin{equation}H_3(Z)=H_3(Z_0)+\frac{1}{2}(Z-Z_0)^TS(Z-Z_0)+O(|Z-Z_0|^3),\end{equation}
where $S$ is the Hessian of $H_3$ evaluated at $Z_0$. Since constants are lost in the formation of the equations of motion we shall ignore the constant $H_3(Z_0)$ in our further discussions. Thus we make the change of variables $Z-Z_0\to\epsilon^\beta X$, where $\beta$ is again a positive number to be chosen. 

So far, starting with (\ref{eq:H1}) we have proposed the following change of variables $u_1\to\epsilon^{2\delta}u_1$ and $Z-Z_0\to\epsilon^\beta X$. In order to have a symplectic change of variables (of multiplier $\epsilon^{2\beta}$) we must make the further change $v_1\to\epsilon^{2(\beta-\delta)}v_1$. Thus we propose the following symplectic change of variables in (\ref{eq:H1}):

\begin{equation}u_1\to\epsilon^{2\delta}u_1,\quad v_1\to\epsilon^{2(\beta-\delta)}v_1,\quad Z-Z_0\to\epsilon^\beta X. \end{equation}

The first restriction is that the kinetic energy and potential energy in $H_1$ should be of the same order of magnitude as mentioned in \cite{MeSc82}, and this leads to the restriction that $2\beta=(2-\alpha)\delta+\gamma$. The second restriction following Hill is that the second term in (\ref{eq:legendre}) should be of the same order of magnitude as the terms in $H_1$ and this leads to the condition $2\delta=\beta$. The smallest positive integer solutions for $\beta$ and $\delta$ are $\delta=1$, $\beta=2$ and $\gamma=\alpha+2$. With this choice of scale factors the Hamiltonian becomes
\begin{equation}\label{eq:H2}
\begin{split}
H=\frac{|v_1|^2}{2M_1}-u_1^TJv_1-\frac{\mu_0\mu_1}{|u_1|^\alpha}-\frac{\mu_2(\mu_1\nu_0^2+\mu_0\nu_1^2)|u_1|^2}{\alpha\mu_2}&[\alpha P_2(\cos\theta)+\frac{\alpha(\alpha-1)}{2}P_1^2(\cos\theta)]\\&+\frac{1}{2}X^TSX+O(\epsilon^2).
\end{split}
\end{equation}

In order to reduce the the number of constants in (\ref{eq:H2}) we shall make one further scaling of the variables. We introduce new variables $\xi$ and $\eta$ to eliminate the subscripts and use the fact that $P_1(x)=x$ and $P_2(x)=\frac{1}{2}(3x^2-1)$. Also we choose $a=((\alpha\mu_2)^{\frac{1}{\alpha+2}},0)$ so that the abscissa points at the sun. Make the symplectic change of coordinates (with multiplier $(\frac{\mu_0+\mu_1}{\alpha+2})^{\frac{2}{\alpha+2}}M_1$):
\begin{equation}
u_1=(\frac{\mu_0+\mu_1}{\alpha+2})^{\frac{1}{\alpha+2}}\xi, \quad v_1=(\frac{\mu_0+\mu_1}{\alpha+2})^{\frac{1}{\alpha+2}}M_1\eta, \quad X=(\frac{\mu_0+\mu_1}{\alpha+2})^{\frac{1}{\alpha+2}}M_1^{1/2}Y
\end{equation}
so that (\ref{eq:H2}) becomes
\begin{equation}
H=\frac{|\eta|^2}{2}-\xi^TJ\eta-\frac{\alpha+2}{|\xi|^\alpha}-\frac{1}{2}((\alpha+2)\xi_1^2-|\xi|^2)+\frac{1}{2}Y^TSY+O(\epsilon^2).
\end{equation}

Hill proposed to construct a lunar theory defined by the Hamiltonian 
\begin{equation}\label{eq:Hprime}
H'=\frac{|\eta|^2}{2}-\xi^TJ\eta-\frac{\alpha+2}{|\xi|^\alpha}-\frac{1}{2}((\alpha+2)\xi_1^2-|\xi|^2)
\end{equation}
and the equations defined by (\ref{eq:Hprime}) are known as Hill's lunar equations. Use the notation $\xi=(x,y)$ and $\eta=(p_x, p_y)$ for the position and momentum variables, then the effective potential 
\[V(x,y)=H'-\frac{1}{2}[(p_x+y)^2+(p_y-x)^2]=-\frac{\alpha+2}{2}x^2-\frac{\alpha+2}{(x^2+y^2)^{\frac{\alpha}{2}}}.\]


\end{document}